\newcommand{\1}{\mathbbm{1}}
\newcommand{\CC}{\mathbb{C}}
\newcommand{\RR}{\mathbb{R}}
\numberwithin{equation}{section}
\newtheorem{theo}{Theorem}[section]
\newtheorem{prop}[theo]{Proposition}
\newtheorem{lem}[theo]{Lemma}
\newtheorem{cor}[theo]{Corollary}
\newtheorem{rem}[theo]{Remark}
\newcommand{\beqn}{\begin{equation}}
\newcommand{\eeqn}{\end{equation}}
\newcommand{\bear}{\begin{eqnarray}}
\newcommand{\eear}{\end{eqnarray}}
\newcommand{\bean}{\begin{eqnarray*}}
\newcommand{\eean}{\end{eqnarray*}}
\newcommand{\dd}{\, {\rm d}}
\begin{document}
\title{Instability of singular equilibria of a wave kinetic equation}
\maketitle
\begin{center}
{ Miguel Escobedo}\\
{\small Departamento de Matem\'aticas,} \\
{\small Universidad del
Pa{\'\i}s Vasco,} \\
{\small Apartado 644, E--48080 Bilbao, Spain.}\\
{\small E-mail~: {\tt miguel.escobedo@ehu.es}}
\end{center}

\begin{center}
{Angeliki Menegaki} \\
{\small Department of Mathematics,} \\
{\small Imperial College London,}\\
{\small Exhibition Road, London, SW7 2AZ, UK}\\
{\small E-mail~: {\tt a.menegaki@imperial.ac.uk}}
\end{center}
\bigskip

\noindent
{\bf Abstract}: We consider the singular Rayleigh-Jeans equilibrium of the $4$-waves kinetic turbulence equation for the three dimensional Schr\"odinger equation. We first show the formation in finite time of a Dirac measure at zero frequency in the solution of the  wave kinetic equation when the initial data  has the form of Rayleigh-Jeans, truncated at large values of the energy. The initial value problem for the linearization around the singular Rayleigh Jeans equilibria is then solved in several functional spaces. Then, long time convergence to a Dirac measure at the origin is described in detail for  some of the solutions. This determines a basin of attraction of the Dirac measure.

%\tableofcontents

\bigskip
\section{Introduction}

The description, within the framework of wave turbulence theory, of large systems of weakly interacting waves gives rise to the study of nonlinear kinetic equations, sometimes referred to as kinetic wave equations (KWE).  These equations are considered to describe the evolution of the density function $n(t, \bf p)$ of waves with wave number $\bf p$ at time $t$. These equations have a family of equilibrium solutions called  Rayleigh Jeans (RJ) thermodynamic equilibria. They can also have other stationary, non-equilibrium solutions known as Kolmogorov-Zakharov (KZ) spectra that cancel the nonlinear collision integral without the integrand being identically zero. These spectra correspond to non-zero wave action and/or energy fluxes (cf. \cite{Zbook}, \cite{DNPZ}, \cite{N}). 

General stability properties of these stationary solutions may be found  in \cite{Zbook} (Chapter 4) or \cite{BZ} (Chapter 3), although mainly for the KZ spectra, with only some short remarks about the RJ equilibria (Section 4.2.3 of \cite{Zbook} or  Section 4.1 of \cite{BZ}). 

\subsection{The ``Schrödinger WKE''.}

In what follows, we only consider the WKE for a system of waves governed by the nonlinear Schrödinger equation in $\RR^3$, which will sometimes be referred to as Schrödinger WKE. This is the only case where the kinetic equation has been rigorously deduced from the corresponding dispersive equation for waves, but only for a spatially homogeneous and isotropic system. First  at equilibrium in \cite{LukSpohn}, and  more recently at non equilibrium, to arbitrarily large time scales, up to the time where the kinetic equation is well-posed, in \cite{DH3}, (cf. also  \cite{BGHS, CollGermain, CollGermain2, DH1} for previous results). No such deduction is known for a  non isotropic system of waves, although the corresponding WKE has been widely  considered (cf. \cite{Zbook, DNPZ}).

The $L^2$ stability property of RJ equilibria is proved in \cite{Men23} for the spatially homogeneous Schrödinger WKE  with frequency cut off. There the linearized operator around a RJ equilibria with $\mu \le 0$ is shown to be coercive in $L^2$. Local well-posedness in weighted $L^2$ and $L^\infty$  for more general initial data for wave kinetic equations similar to the Schrödinger WKE without cut off and  with slightly more general dispersion relations is proved in \cite{GIT20}. 

On the other hand, for the spatially inhomogeneous WKE, global in time existence, uniqueness and stability of mild solutions for small initial data that depend on the space variable and not necessarily isotropic in frequency are proved in \cite{A}. Their result shows that the equilibrium $n\equiv 0$ is stable under sufficiently small perturbations with uniform algebraic  decay  at infinity both in space and frequencies.  

Let us now consider the Schr\"odinger WKE and examine the stability of RJ equilibria in the simplified situation of spatially homogeneous and radially symmetric distributions $n(t, p)$, where $p=|{\bf p}|$. With a slight abuse of notation, we write $n({\bf p})=n(p)$). The equation in this case reads 
\begin{align} \label{eq:NL WKEB}
&\partial_t n(t,p) = \iint_{D(p)} \frac {\min\{p, p_1, p_2, p_3\}} {p } \left(n_3n_4(n_1+n_2)-n_1n_2(n_3+n_4)\right) p_2p_3 \dd p_2\dd p_3\\
&D(p)=\left\{(p_2, p_3);\,\,p_2>0, p_3>0, p_2+p_3>p \right\},\,\,p_1=p_2+p_3-p.\nonumber
\end{align}
Or also, in terms of the new variables $\omega =p^2$, $f(\omega )=n(\bf p)$,
\begin{align}
&\partial_{t}f=Q(f(t))\label{eq:NL WKEC10}\\
&Q(f(t))=\iint _{ D(\omega _1) }  W\left[  \left(  f_{1}+f_{2}\right)  f_{3}
f_{4}-\left(  f_{3}+f_{4}\right)  f_{1}f_{2}\right]  \dd \omega_{3} \dd \omega
_{4},\ \ t>0 \label{eq:NL WKEC11}\\
&W=\frac{\min\left\{  \sqrt{\omega_{1}},\sqrt{\omega_{2}},\sqrt{\omega_{3}
},\sqrt{\omega_{4}}\right\}  }{\sqrt{\omega_{1}}}\ \ ,\ \ \ \omega_{2}
=\omega_{3}+\omega_{4}-\omega_{1}. \label{eq:NL WKEC13} 
\end{align}
Due to the change to radial variables, the functions $n$ and $f$ do not describe  any more a density of waves  and the conservation of action wave and energy are respectively:
\begin{align}
&\int _0^\infty\!\!\! n(t, p)p^2\dd p=\int _0^\infty \!\!\!n(0, p)p^2\dd p,\,\,\,\int _0^\infty \!\!\!f(t, \omega )\omega^{1/2}\dd \omega=\int _0^\infty\!\!\! f(0, \omega )\omega^{1/2}\dd \omega,\,\,\forall t>0 \label{CnsM}\\
&\int _0^\infty \!\!\!n(t, p)p^4\dd p=\int _0^\infty\!\!\! n(t, p)p^4\dd p,\,\,\,\int _0^\infty \!\!\!f(t, \omega )\omega^{3/2}\dd\omega=\int _0^\infty \!\!\! f(0, \omega )\omega^{3/2}\dd\omega,\,\forall t>0.\label{CnsE}
\end{align}
The KZ spectra are $Q^{1/3}p^{-3}$ and $P^{1/3}p^{-7/3}$ with $Q$ and $P$ real constants  corresponding to wave action and energy fluxes respectively (cf. \cite{Zbook}, \cite{DNPZ}). 
The \ radially symmetric RJ equilibria of (\ref{eq:NL WKEB}) or (\ref{eq:NL WKEC10}) are 
\begin{align}
n(p)= \frac {1} {\omega (p)-\mu },\,\,\mu \le 0\label{ERJ1}
\end{align}
for which  the integrand is identically zero. In all that follows only the case of (\ref{ERJ1}) for $\mu =0$, that we sometimes call singular RJ, is considered because our arguments based on homogeneity do not apply when $\mu <0$.

\subsection{Set up and Main results}
Our first result shows that  singular RJ  distributions are  unstable equilibria for the non linear equation (\ref{eq:NL WKEC10})  under some very simple perturbations. For that purpose the following definition of weak solution of the nonlinear equation is used. Consider the sub space of non-negative measures on $[0, \infty)$:
\begin{align*}
&\mathcal{M}^\rho _{+}\left(\left[  0,\infty\right)\right) =\left\{\mu \in  \mathcal{M}_{+}([0, \infty); \| \mu \| _{ \rho  }<\infty\right\},\,\,\rho \in \RR,\\
&\text{where},\,\|\mu \| _{ \rho  }=\sup _{ R>1 } \frac {1} {(1+R)^\rho }\frac{1}{R}\int_{\frac{R}{2}}^{R}\mu\left(  \dd \omega\right)
+\int _0^1 \mu (\dd \omega ).
\end{align*}
and let  $f_0$ be a non negative measure  such that $g_0$, the measure defined by $g_0(\omega )=\sqrt \omega f_0(\omega )$, satisfies  $g_0\in \mathcal{M}^\rho _{+}\left(\left[  0,\infty\right)\right) $ for some $\rho <-1$. 

We say that a family of  measures $f(t, \cdot)$ for $t>0$ is a weak solution of  (\ref{eq:NL WKEC10}) with initial data $f_0$ if $g(t, \cdot)$  defined as $g(t, \omega )=\sqrt{\omega }f(t, \omega )$ satisfies:\\
1.- $g \in C([0, \infty); \mathcal{M}^\rho _{+}\left(\left[  0,\infty\right)\right) )$\\
2.-for all  $\varphi\in C_{0}^{2}\left(  \left[  0,\infty \right)  \times\left[  0,\infty\right)  \right)$ and all $t^*>0$:
\begin{align}
 & \int_{\left[  0,\infty\right)  }g\left(  t_{\ast},\omega\right)
\varphi\left(  t_{\ast},\omega\right)  \dd \omega  -\int_{\left[  0,\infty\right)
}g_{0}\varphi\left(  0,\omega\right)  \dd \omega
  =\int_{0}^{t_{\ast}}\int_{\left[  0,\infty\right)  }g\partial_{t}\varphi
\dd \omega \dd t  \nonumber \\
&  +\int_{0}^{t_{\ast}}\iiint_{\left[  0,\infty\right)^3 }\frac{g_{1}g_{2}g_{3}
\Phi}{\sqrt{\omega_{1}\omega_{2}\omega_{3}}}\times \nonumber \\
&\hskip 2cm  \times\left[  \varphi\left(
\omega_{1}+\omega_{2}-\omega_{3}\right)  +\varphi\left(  \omega_{3}\right)
-\varphi\left(  \omega_{1}\right)  -\varphi\left(  \omega_{2}\right)  \right]
\dd \omega_{1} \dd \omega_{2} \dd \omega_{3} \dd t  \label{Z2E1}
\end{align}
where $\Phi =\min \{ \sqrt{\omega _1}, \sqrt{\omega _2}, \sqrt{\omega _3}, \sqrt{\omega _4} \}$,\\
3.- for all $t>0$,
\begin{align*}
&\int  _{ [0, \infty) }g(t, \omega ) \dd \omega =\int  _{ [0, \infty) }g_0(\omega ) \dd \omega,\\
&\int  _{ [0, \infty) }g(t, \omega ) \omega  \dd \omega =\int  _{ [0, \infty) }g_0(\omega ) \omega \dd \omega
\end{align*}
and we say that $f$ conserves the mass and the energy.

It was proved in Theorem 2.19 and Proposition 2.28 of  \cite{EV3} that for all initial data $f_0$ as above there exists at least one non negative weak solution $f$ in the sense of the above definition. Our first result is then,

\begin{theo}
\label{Thm}
There exists a numerical constant $C>0$ such that, for all $A>0$, $B>0$, $\rho <-1$, if  $g_0\in \mathcal{M}^\rho _{+}\left(\left[  0,\infty\right)\right) $,  
\begin{equation*}
f_0(\omega )=A\omega ^{-1}\1 _{ 0<\omega <B }+\min \left(A\omega ^{-1}, \frac {g_0(\omega )} {\sqrt \omega }\right)\1 _{ \omega >B },
\end{equation*}
and $f$ is one weak non negative solution of (\ref{eq:NL WKEC10}), then 
\begin{equation*}
g(t, \{0\})>0,\,\,\forall t>C A^{-2}.
\end{equation*}
where $g$ is defined as $g(t, \omega )=\sqrt \omega f(t, \omega )$.
\end{theo}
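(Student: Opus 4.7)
The plan combines a scaling reduction with a direct construction of a pointwise ODE-type lower bound whose blow-up forces the formation of a Dirac at the origin.

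\textbf{Scaling reduction.} Since $Q$ is cubic in $f$, the equation $\partial_t f = Q(f)$ is invariant under $f(t, \omega) \mapsto A\, f(A^2 t, \omega)$. Setting $\tilde f(\tau, \omega) := A^{-1} f(A^{-2}\tau, \omega)$ yields a weak solution of the same equation with initial datum of the same form as $f_0$ but normalized to $A = 1$. It therefore suffices to exhibit a universal constant $C$ such that, in the normalized case, $g(t, \{0\}) > 0$ for every $t > C$.

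\textbf{Pointwise lower bound and ODE blow-up.} The full singular Rayleigh--Jeans profile $f_{RJ} = 1/\omega$ makes the collision integrand vanish pointwise by detailed balance (with $1/f_{i} = \omega_{i}$, $1/f_{1}+1/f_{2} = \omega_{1}+\omega_{2} = \omega_{3}+\omega_{4} = 1/f_{3}+1/f_{4}$), so all nonzero contributions to $Q(f_0)$ come from configurations in which some $\omega_{i}$ exceeds $B$. In the region $\{\omega_{2} > B,\; \omega_{3}, \omega_{4} < B\}$ of the $\omega_{3}, \omega_{4}$-integration, the vanishing factor $f_0(\omega_{2}) = 0$ removes the loss and leaves the positively signed expression $+W f_{1} f_{3} f_{4}$; after rescaling $\omega_{3}, \omega_{4}$ by $B$ the remaining double integral is the universal constant $\iint_{u+v>1,\, u,v<1} du\, dv/(uv) = \pi^{2}/6$, which gives $Q(f_0)(\omega) \ge c/\omega$ on $(0, B/2)$. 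The same argument shows that whenever $f(t, \omega) \ge F(t)/\omega$ on $(0, B/2)$, one has $Q(f)(\omega) \ge c F(t)^{3}/\omega$, and a Duhamel iteration propagates
\[
f(t, \omega) \;\ge\; \frac{F(t)}{\omega}, \qquad F'(t) \ge c F(t)^{3}, \quad F(0) = 1,
\]
so that $F$ blows up at $t^{\ast} = 1/(2c)$.

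\textbf{From blow-up to Dirac.} Suppose by contradiction that $g(s, \{0\}) = 0$ for every $s \in [0, t^{\ast})$. Then the lower bound persists on $[0, t^{\ast})$, yielding $f(s, \omega) \ge F(s)/\omega$ with $F(s) \to \infty$ as $s \to t^{\ast\,-}$. For any fixed $R \in (0, B/2)$,
\[
g(s, [0, R]) = \int_{0}^{R} \sqrt{\omega}\, f(s, \omega)\, d\omega \;\ge\; 2 F(s) \sqrt{R} \;\longrightarrow\; \infty,
\]
contradicting the mass conservation $g(s, [0,\infty)) = g_{0}([0,\infty)) < \infty$. Hence $g(s, \{0\}) > 0$ for some $s \le t^{\ast}$; combined with the monotonicity $t \mapsto g(t, \{0\})$ (the weight $W$ vanishes at $\omega = 0$ so no loss from the atom is possible), this gives $g(t, \{0\}) > 0$ for all $t > t^{\ast}$. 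Setting $C = t^{\ast}$ and undoing the scaling gives $g(t, \{0\}) > 0$ for $t > C A^{-2}$.

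\textbf{Main obstacle.} The main technical difficulty is the rigorous propagation of the pointwise lower bound $f(t, \omega) \ge F(t)/\omega$. One must control the loss term in $Q(f)$ at small $\omega$ so that it does not cancel the positive gain: in particular, one needs to rule out excessive ``refill'' of the tail region $(\omega > B)$ by the evolution, which would reintroduce large $f_{2}$ into the loss integrand. This uses the weighted norm $\|\cdot\|_{\rho}$ and the conservation laws to bound moments of $g(t)$ quantitatively, combined with Gronwall-type estimates adapted to the singular profile. Interpreting pointwise lower bounds consistently within the weak-measure framework, and justifying the monotonicity of $t \mapsto g(t, \{0\})$ at that regularity, are the final technical ingredients.
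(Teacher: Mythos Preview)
Your proposal contains a genuine gap at exactly the point you flag as the ``main obstacle,'' and this gap is not a technicality but the entire content of the argument. The claimed implication ``$f(t,\omega)\ge F(t)/\omega$ on $(0,B/2)$ $\Longrightarrow$ $Q(f)(\omega)\ge cF(t)^3/\omega$'' fails as stated. Your gain computation uses the region $\{\omega_2>B,\ \omega_3,\omega_4<B\}$, which for small $\omega_1$ forces $\omega_3+\omega_4>B$ and hence at least one of $\omega_3,\omega_4$ lies in $(B/2,B)$, where you have assumed no lower bound on $f$. More seriously, your Case-2 positivity relied on $f_2=0$ for $\omega_2>B$, which is true only at $t=0$; for $t>0$ the tail refills and the loss term $-(f_3+f_4)f_1f_2$ is no longer zero there, while on the complementary regions detailed balance is broken and the integrand can be negative without any evident compensation. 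Controlling these negative contributions requires a priori bounds on $f(t,\cdot)$ over the whole line, which conservation of mass/energy alone does not provide (the $L^\infty$ or pointwise size of $f$ is not controlled by the conserved moments). Finally, pointwise inequalities like $f(t,\omega)\ge F(t)/\omega$ have no direct meaning for the measure-valued weak solutions the theorem concerns, and a Duhamel iteration presupposes a mild formulation that the weak framework does not supply.

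The paper's proof avoids all of these issues by following Lu's method: one tests the equation against the \emph{convex} functions $\varphi_\varepsilon(x)=(1-x/\varepsilon)_+^2$, for which the symmetrized collision integrand is automatically nonnegative, so the functionals $N_\alpha(g(t),\varepsilon)=\varepsilon^{-\alpha}\int_{[0,\varepsilon]}(1-x/\varepsilon)^2\,dg(t,x)$ are monotone in $t$ and satisfy a quantitative lower bound $\mathscr K(\varphi_\varepsilon,g)\ge \underline N_\alpha(g,\varepsilon)\,[A_{(1-\alpha)/2}(g,\varepsilon)]^2$. An iterative bootstrap on these integral quantities (no pointwise control needed) yields $\underline N_{1/2}(g_0)\,\overline N_{1/2}(g_0)\le C/T$ whenever $g(t,\{0\})=0$ on $[0,T]$; for $g_0(\omega)=A\omega^{-1/2}$ near $0$ one computes $\underline N_{1/2}(g_0),\overline N_{1/2}(g_0)\sim A$, giving $T\lesssim A^{-2}$. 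This route needs no control of the tail, no pointwise propagation, and is fully compatible with the weak-measure setting.
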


\begin{rem} 
\label{rem1}
Theorem \ref{Thm} shows that for initial data that behave like a singular RJ close to the origin, a Dirac's measure is formed after some finite time $t^*=C A^{-2}$ and this  time may be as small as desired, depending on the constant $A$. Local-well posedness of the equation up to time $t_*$ in some space of locally bounded functions is natural to be conjectured and is under current investigation, following techniques in \cite{EscMischVel07, EscMischVel08}. 
\end{rem}

\begin{rem} 
Theorem \ref{Thm} is very natural in view of part b) of Theorem 6.1 in \cite{XL} for the Nordheim equation, and its proof follows along similar arguments, since for non-negative density functions the collision integral of the Nordheim's equation is bounded from below by the collisional operator  of the wave turbulence equation. 
\end{rem}

After  Theorem \ref{Thm}, our motivation to consider  the linearization of the equation (\ref{eq:NL WKEC10}) around the RJ equilibrium $n^0(p)=p ^{-2}$ is twofold.  

The instability of singular RJ equilibrium shown in Theorem \ref{Thm} comes from the concentration of waves at zero frequency to form a Dirac's delta at $\omega (p)=0$ in finite time. Precise information is lacking about how this condensation phenomena happens (cf. for example \cite{LLPR}). A concentration is still present in the linearized equation, although it takes there an infinite time.  Indeed, some of the solutions $A$ of the linear equation converge  to a Dirac measure asymptotically as $t\to \infty$, although $A(t)$ remains a locally bounded function on $(0, \infty)$ for all time.
In the linear equation it is possible to describe  this concentration mechanism in detail, to show in particular that it happens with an explicit exponential rate and to describe the pointwise  behavior of some of the solutions.

On the other hand, the linearized equation around stationary solutions  have proved to be useful in order to solve initial value problems for the nonlinear equation with a prescribed behavior as $p\to 0$. That is the case still for equation (\ref{eq:NL WKEC10}) and its linearization around the KZ spectrum $\omega ^{-7/6}$  as used in \cite{EscMischVel07, EscMischVel08}. It could then be useful in order to obtain local-well posedness for  the equation (\ref{eq:NL WKEB}) or (\ref{eq:NL WKEC10}).

\subsubsection{Linearized equation} 

The  linearization of equation (\ref{eq:NL WKEB})  around the   RJ  equilibrium $n^0(p)=|p|^{-2}$ has been considered in \cite{MM}, where the perturbation is written as follows,
\begin{align}
\label{EL.1}
n(t, p)=n^0(p)(1+A(t, p)).
\end{align}   
If $q(n)=n_3n_4(n_1+n_2)-n_1n_2(n_3+n_4)$, a simple computation yields,
\begin{align*}
q(n^0(1+A))=A_1\left(n^0_3n^0_4-n^0_2n^0_3-n^0_2n^0_4 \right)-A_2 n^0_3n^0_4+A_3 n^0_2n^0_4+A_4 n^0_2n^0_3
\end{align*}
where $n^{0}_2$ denotes $ n^{0}(p_4+p_3-p_1)$. If  the perturbation $A$ is such that 
\begin{align*}
\iint  _{ D(p_1) }\frac {\min\{p_1, p_2, p_3, p_4\}} {p_1}A(p_3) n^{0}(p_4+p_3-p_1)n^0(p_4)\dd p_3 \dd p_4<\infty, 
\end{align*}
then by Fubini's Theorem,
\begin{align}
\iint _{ D(p_1) }&\frac {\min\{p_1, p_2, p_3, p_4\}} {p_1}q(n^0(1+A_1)) \dd p_3 \dd p_4=\nonumber\\
=&A_1\iint  _{ D(p_1) }\frac {\min\{p_1, p_2, p_3, p_4\}} {p_1}\left(n^0_3n^0_4-n^0_2n^0_3-n^0_2n^0_4 \right) \dd p_3 \dd p_4 \nonumber\\
&-\iint  _{ D(p_1) }\frac {\min\{p_1, p_2, p_3, p_4\}} {p_1} A_2n^{0}_3n^{0}_4 \dd p_3 \dd p_4 \nonumber\\
&+2
\iint  _{ D(p_1) }\frac {\min\{p_1, p_2, p_3, p_4\}} {p_1} A_4n^{0}_2n^{0}_3 \dd p_3 \dd p_4. \label{EFF1}
\end{align}

In this case it was shown  in \cite{MM} that the linearized equation for $A$ may be written,
\begin{align}
\label{E1}
\frac {\partial A(t, p)} {\partial t}=\gamma_N  A(t, p)+\int _0^\infty \mathcal V\left(\frac {p} {p'}\right)A(t, p')\frac {\dd p'} {p'} 
\end{align} 
where $\mathcal V$ is an explicit function and $\gamma_N$ is an explicit negative constant, given in the Appendix below.

The findings in \cite{MM} regarding the linear equation \eqref{E1} are described in some detail in the Appendix,  Section \ref{Sapp}, but  can be briefly summarised here as follows. First, the explicit expressions of the function $\mathcal V$ and its Mellin transform $W _{ \mathcal V }$ were obtained in terms of special functions. Solutions of the equation (\ref{E1}) were obtained using the Mellin transform and for some of them, the asymptotic behavior as $p\to 0$ and as $p \to \infty$, was described by means of the steepest descent method. It was also observed that  
for some initial data, all the waves tend to concentrate exponentially fast at the value $p=0$.

Equation (\ref{E1}) is considered here in more strictly mathematical terms, aiming in particular at establishing the well-posedness of the equation, define and estimate the linear semigroup defined by the equation, and determine the basin of attraction of the Dirac measure at the origin. These aspects could not be addressed in detail in \cite{MM} are of interest not only in their own right but also for considering the nonlinear equation.

Our main results on the linearised equation \eqref{E1} are summarised  as follows. Define the   spaces:
\begin{align}
&L_\theta^q\left(0, \infty \right)=\left\{A\, \text{ is measurable on}\, (0, \infty);\,\int _0^\infty |A(p)|^q p^{2\theta-1} \dd p <\infty \right\}
\label{Space1}\\
& \mathscr Y _{ A, B }(0, \infty) := \{g\in L^\infty _{ \text{loc} }(0, \infty);  ||g|| _{A, B }<\infty\},\,\,\text{where},\label{Space2}\\
& ||g|| _{A, B }:=\sup _{ 0<p<1 }(p^{2A}|g(p)|)+\sup_{p>1} (p^{2B}|g(p)|)\nonumber.
\end{align}
 
 \begin{theo}\label{Theo2:Linear eq}  The linear equation \eqref{E1} has the following properties:
\begin{itemize} 
%\noindent
\item \emph{(Well-Posedness)} (i) For all non negative $A_0\in   L_\theta^q\left(0, \infty\right)$ with $q\ge 1$ and $\theta\in (-1, 5/2)$ there exists a unique  $ A \in C([0, \infty); L^q_\theta( 0, \infty) ) \cap 
C^1((0, \infty); L_\theta^q(0, \infty) )$ such that $A(t)\ge 0$ for all $t > 0$ and satisfying equation (\ref{E1}) in $L^q_\theta( 0, \infty) $. Moreover,   for all $T>0$ there exists a constant $C_T>0$ satisfying,
\begin{align*}
\|A(t)\|_{L^q_\theta} \le C_T  \|A_0\|_{L_\theta^q}, \quad \forall t \in [0, T].
\end{align*}
When $\theta=0$,
\begin{align*} 
 \|A(t)\|_{L^q_0} \le e^{\gamma t}  \left( 1 + \| \mathcal V \|_{L^q_0} \left( e^{ \| \mathcal V \|_{L^q_0} t} - 1 \right) \right) \| A_0 \|_{L^q_0},\,\forall t\ge 0.
 \end{align*}

(ii) For all  non negative $A_0\in  \mathscr Y _{ A, B }$ with $\label{EAB}-\frac {5} {2}<B<-\frac {3} {4}<\frac {1} {2}<A<1$ there exists a unique 
$ A \in C([0, \infty); \mathscr Y _{ A, B } \cap C^1((0, \infty); \mathscr Y _{ A, B }) $ such that $A(t)\ge 0$ for all $t>0$ and satisfying equation (\ref{E1}) in $L^q_\theta( 0, \infty) $. For all $T>0$ there exists a constant $C_T>0$ satisfying,
\begin{align*}
\|A(t)\|_{A, B} \le C_T  \|A_0\|_{A, B}, \quad \forall t \in [0, T].
\end{align*}
%\noindent
\item \emph{(Conservation of wave action)} The solutions conserve the wave action,
\begin{align}
A_0\in L^1(0, \infty)\Longrightarrow \int _0^\infty A(t, p) \dd p= \int _0^\infty A_0(p) \dd p,\,\forall t>0.
\end{align}
\item  \emph{(Non-Conservation of energy)} There is growth of the energy along the solutions: there exists $\kappa >0$ such that
\begin{align}
A_0\in L^1((0, \infty); p^2 \dd p) \Longrightarrow \int _0^\infty A(t, p)p^2 \dd p= e^{ \kappa t}\int _0^\infty A_0(p)p^2 \dd p. 
\end{align}
%\noindent
\item  \emph{(Long time behaviour)} If the initial data $A_0\in \mathscr Y _{ A, B }$, $A_0\ge 0$ and $A, B$ satisfy (\ref{EAB}) then,
for all $\varphi \in C([0, \infty))\cap L^\infty((0, \infty))$,
\begin{align*}
&\lim _{ t\to \infty }\int _0^\infty A(t, p)\varphi (p) \dd p= \varphi (0) \int _0^\infty A_0(p) \dd p.
\end{align*}
\end{itemize}
\end{theo}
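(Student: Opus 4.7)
The plan is to treat the four items in sequence: well-posedness by a Duhamel/Picard fixed-point in both functional settings, the moment identities by testing against $p^{s-1}$, and the long-time convergence via a Mellin-representation of the linear semigroup combined with the steepest-descent ingredients from the Appendix. Throughout, I rewrite the integral operator in \eqref{E1} as the Mellin convolution
\[
KA(p) = \int_0^\infty \mathcal{V}(u) A(p/u) \frac{\dd u}{u}
\]
via the substitution $u = p/p'$, and denote by $W_\mathcal{V}(s) = \int_0^\infty \mathcal{V}(u) u^{s-1} \dd u$ its Mellin transform, whose explicit expression is recorded in the Appendix.

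For well-posedness in $L^q_\theta$, a weighted Minkowski inequality gives $\|KA\|_{L^q_\theta} \le W_\mathcal{V}(2\theta/q) \|A\|_{L^q_\theta}$, with the range $\theta \in (-1, 5/2)$ matching the integrability window of $\mathcal{V}$ at $0$ and $+\infty$ established in the Appendix. The Duhamel formula $A(t) = e^{\gamma_N t} A_0 + \int_0^t e^{\gamma_N(t-s)} KA(s) \dd s$ is then solved by Picard iteration in $C([0, T]; L^q_\theta)$, producing a strongly $C^1$ solution with a Gronwall bound; the sharpened estimate at $\theta = 0$ follows by estimating the $n$-th iterate by $e^{\gamma_N t} \|\mathcal{V}\|_{L^1_0}^n t^n/n!\, \|A_0\|_{L^q_0}$ and summing the series. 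Non-negativity is propagated along the iteration because $\mathcal{V} \ge 0$ and $e^{\gamma_N t} \ge 0$. The $\mathscr{Y}_{A,B}$ case runs along the same lines with a pointwise estimate: splitting $KA(p)$ according to $u \lessgtr p$ and using $|A(p/u)| \le \|A\|_{A,B} (p/u)^{-2A}$ or $(p/u)^{-2B}$ gives
\[
|KA(p)| \le \|A\|_{A,B}\bigl(p^{-2A} W_\mathcal{V}(2A) + p^{-2B} W_\mathcal{V}(2B)\bigr),
\]
where the prescribed windows $A \in (1/2, 1)$ and $B \in (-5/2, -3/4)$ are precisely the ranges for which $W_\mathcal{V}(2A)$ and $W_\mathcal{V}(2B)$ are finite.

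Testing \eqref{E1} against $p^{s-1}$ and applying Fubini to the Mellin-convolution term gives, whenever the integrals are finite,
\[
\partial_t \mathcal{M}_s(t) = \bigl(\gamma_N + W_\mathcal{V}(s)\bigr)\mathcal{M}_s(t), \quad \mathcal{M}_s(t) := \int_0^\infty A(t, p) p^{s-1} \dd p.
\]
Wave-action conservation is the identity $\gamma_N + W_\mathcal{V}(1) = 0$, the linearized reflection of the fact that the singular RJ is stationary, and is verified directly from the closed-form expressions of $\gamma_N$ and $W_\mathcal{V}$; the case $s=3$ gives $\kappa = \gamma_N + W_\mathcal{V}(3) > 0$ by the same verification. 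For the long-time behaviour, linearity of \eqref{E1} produces the Mellin representation $W_A(s, t) = e^{(\gamma_N + W_\mathcal{V}(s))t} W_{A_0}(s)$ on an appropriate strip, which translates by inverse Mellin into a Green's kernel $\Gamma(t, p, p') = (p')^{-1} G(t, p/p')$ with $G(t, \cdot)$ given by a complex contour integral of $e^{(\gamma_N + W_\mathcal{V}(s))t}$. The steepest-descent analysis of the Appendix shows that $G(t, \cdot)$ concentrates exponentially fast towards $p=0$; the decisive quantitative input is $W_\mathcal{V}'(1) < 0$, equivalently $\gamma_N + W_\mathcal{V}(1+\beta) < 0$ for small $\beta > 0$. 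Integrating the kernel representation against $A_0$ and a bounded continuous $\varphi$, splitting $\varphi = \varphi(0) + (\varphi - \varphi(0))$, using wave-action conservation on the first piece and dominated convergence on the second, yields the stated limit $\varphi(0) \int A_0 \dd p$.

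The main obstacle is this last step. The ranges $A \in (1/2, 1)$ and $B \in (-5/2, -3/4)$ force $A_0$ to be non-integrable both at $p = 0$ and at $p = +\infty$, so the Mellin transform $W_{A_0}(s)$ has an empty common strip of holomorphy and the moment identity cannot be applied at $s = 1+\beta$ directly. The argument must therefore be combined with a decomposition of $A_0$ into low- and high-frequency components whose Mellin strips are disjoint but individually non-empty, with the Green's kernel applied separately to each component; uniform control, in the high-frequency component, of the concentration rate of $G(t, p/p')$ near $p = 0$ is the most delicate technical input needed to close the argument.
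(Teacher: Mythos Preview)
Your $L^q_\theta$ well-posedness, the non-negativity argument, and the moment identities are correct and essentially coincide with the paper's proof (the paper passes to $x=2\log p$ so that the Mellin convolution becomes a standard convolution on $\RR$, and runs the same Duhamel/Picard scheme there; Corollary~\ref{S3Pconsl} is precisely your moment computation written in Fourier form).

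The genuine gap is in the $\mathscr Y_{A,B}$ operator bound. Your inequality
\[
|KA_0(p)|\le \|A_0\|_{A,B}\bigl(p^{-2A}W_{\mathcal V}(2A)+p^{-2B}W_{\mathcal V}(2B)\bigr)
\]
cannot hold on the stated range $B\in(-5/2,-3/4)$: the Mellin integral $W_{\mathcal V}(2B)=\int_0^\infty\mathcal V(u)\,u^{2B-1}\dd u$ diverges at $u=0$ whenever $B<-1$, since $\mathcal V(u)\sim u^2|\log u|$ there and convergence needs $2B>-2$. Worse, $K$ is not even defined on all of $\mathscr Y_{A,B}$ in that range: for $A_0(p)=p^{-2B}\1_{p>1}\in\mathscr Y_{A,B}$ and $p>1$ the integrand in $KA_0(p)$ behaves like $p^{-2B}u^{2B+1}|\log u|$ as $u\to 0$ and blows up. So the claim that ``the prescribed windows are precisely the ranges for which $W_{\mathcal V}(2A)$ and $W_{\mathcal V}(2B)$ are finite'' is false, and your Picard scheme does not close. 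The paper's argument (Corollary~\ref{cor4.5}) works instead in the log variable with the kernel $H(t)=\mathscr F^{-1}(e^{\widehat K t}-1)$, using its two-sided decay $|H(t,x)|\lesssim e^{x}(1+|x|^3)$ for $x<0$ and $|H(t,x)|\lesssim e^{-5x/2}(1+|x|^3)$ for $x>0$ from Corollary~\ref{H2}; the admissible window one reads off is $-5/2<A,B<1$ for $X_{A,B}$, which after the change of variables corresponds to exponents of the \emph{opposite} sign from those you are quoting in $\mathscr Y_{A,B}$.

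The same sign issue is behind your ``empty common strip'' obstacle in the long-time step: with the exponents that actually make $K$ bounded, the datum decays at both ends in $x$, so $\widehat{f_0}$ is analytic on a genuine strip containing both $\Im\xi=1/2$ and the saddle $\Im\xi=3/4$. The paper (Theorem~\ref{S4P4.1}) then writes $\int_{-\infty}^L f(t,x)e^{x/2}\dd x$ as a contour integral of $\widehat{f_0}(\xi)\,e^{t\Omega(\xi)}\,(i\xi+\tfrac12)^{-1}e^{L(i\xi+1/2)}$, deforms across the simple pole at $\xi=i/2$ (whose residue is exactly the conserved wave action), and pushes the remainder to the steepest-descent line $\Im\xi=3/4$ where $\Omega<0$, giving exponential decay. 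No low/high-frequency splitting of $A_0$ is needed.
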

The last property above shows that all the mass of the perturbation $n^0(p)A(t, p)$ concentrates at the origin as $t\to \infty$.

\subsection{Organisation of the article} The content of the article is as follows. 
Section \ref{sec: NL instability} contains the proof of finite-time condensation as stated in Theorem \ref{Thm}. Next, in Section \ref{sec: linear prob proof} we proceed with the set-up of proving Theorem \ref{Theo2:Linear eq}.
In particular, Subsection  \ref{SKHW} contains the description of some properties of functions containing key information on the linearised problem. In Subsection \ref{Existence} the Cauchy problem for equation (\ref{E8}) is proved to be well-posed in several  functional spaces and conservation of the wave action (cf.(\ref{S1ETM})) is  proved to hold for some of the solutions. Results on the long time behaviour of the solutions for some initial data are presented in Subsection \ref{Slongtime}.

\section{Nonlinear instability of the RJ equilibrium} \label{sec: NL instability}

We prove in this Section Theorem 1 and to this end follow closely  \cite{XL}. Let us then denote, for $g$  a non negative measure on $[0, \infty)$,
\begin{align*} 
N_0(g, \varepsilon )  = \int_{[0,\varepsilon ] } \left( 1-\frac{x}{\varepsilon } \right)^2  \dd g(x),\,\,
N _{ \alpha  }(g, \varepsilon )=\frac {1} {\varepsilon ^\alpha }N_0(g, \varepsilon ), 
\end{align*} 
and, 
\begin{align}
&\underline{N}_\alpha (g, \varepsilon )=\inf _{ 0<\delta \le \varepsilon  } N_\alpha (g, \delta),\,\,\overline{N}_\alpha (g, \varepsilon )=\sup _{ 0<\delta \le \varepsilon  } N_\alpha (g, \delta), \label{eq: bar N}\\
&\underline{N}_{\alpha}(g) = \lim_{\varepsilon  \to 0^+}\underline{N}_{\alpha}(g, \varepsilon ),\,\,
\overline{N}_{\alpha}(g) = \lim_{\varepsilon  \to 0^+}\overline{N}_{\alpha}(g, \varepsilon ) \label{eq: overline N}.
\end{align}
Also we define
\begin{align*}
A_\alpha (g, \varepsilon  )= \varepsilon ^{-\alpha} \int_{[0, \varepsilon ] } \left( \frac{x}{\varepsilon } \right)^2  \dd  g(x).
\end{align*}
\begin{lem} \label{lem:properties of N} Suppose that $f$  is a solution of (\ref{eq:NL WKEC10}) in the weak sense  (\ref{Z2E1}) and $g$ is the measure such that $g(t,\omega )=\sqrt \omega f(t, \omega )$ Then,
\begin{itemize} 
\item[(i)] For all $ 0\leq s<t$,  $\varepsilon  >0$,
\begin{align*}
&N_\alpha(g(t), \varepsilon ) - N_\alpha(g(s), \varepsilon ) = \varepsilon ^{-\alpha} \int_s^t \mathscr K (\varphi_\varepsilon, g(\tau ) ) \dd \tau,   \\
&\mathscr K (\varphi_\varepsilon, g(\tau ) )=\iiint_{\left[  0,\infty\right)^3 }\frac{g_{1}(\tau )g_{2}(\tau )g_{3}(\tau )
\Phi }{\sqrt{\omega_{1}\omega_{2}\omega_{3}}}\times \nonumber \\
&\hskip 2cm  \times\left[  \varphi_\varepsilon \left(
\omega_{1}+\omega_{2}-\omega_{3}\right)  +\varphi_\varepsilon \left(  \omega_{3}\right)
-\varphi_\varepsilon \left(  \omega_{1}\right)  -\varphi_\varepsilon \left(  \omega_{2}\right)  \right]
\dd \omega_{1} \dd \omega_{2} \dd \omega_{3}\\
&\quad \varphi_\varepsilon  (x) := 1_{[0,\varepsilon ] }(x) \left( 1- \frac{x}{\varepsilon } \right)^2,
\end{align*} 
where $\Phi =\min \{\sqrt{\omega _1}, \sqrt{\omega _2}, \sqrt{\omega _3}, \sqrt{\omega _4} \}.$
 \item[(ii)]The functional $\bar{N}_\alpha(g,\sigma)$ is non-decreasing in the second variable:
$$ \sigma \ge \sigma '  \Longrightarrow \overline{N}_\alpha(g,\sigma) \ge  \overline{N}_\alpha(g,\sigma') $$
\item[(iii)] For any $g$ a non-negative measure on $[0, \infty)$, for all $\alpha \in [0,1/2]$ and $\varepsilon  >0$, 
$$ \underline{N}_\alpha (g, \varepsilon )  \left[A_{\frac{1-\alpha}{2}} (g, \varepsilon ) \right]^2 \leq\mathscr K (\varphi_\varepsilon, g )$$

\item[(iv)] For any $g$ a non-negative measure on $[0, \infty)$ with $g(\{0\})=0$, for all $ \alpha \geq 0$ and $\varepsilon  >0$,
$$\sqrt{N_\alpha(g, \varepsilon ) } \leq 
\int_0^1 \sigma^{\alpha/2-1} [  A_\alpha (g, \varepsilon  \sigma)  ]^{1/2} \dd \sigma. $$ 
\end{itemize} 
\end{lem}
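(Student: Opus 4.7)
I would apply the weak formulation \eqref{Z2E1} with the time-independent test function $\varphi_\varepsilon$. Since $\varphi_\varepsilon$ is only Lipschitz, a brief mollification reduces to admissible $C^2_c$ test functions; the term $\int g\,\partial_t\varphi_\varepsilon$ vanishes as $\varphi_\varepsilon$ has no time dependence, and dividing by $\varepsilon^\alpha$ recovers the claim. \textbf{Part (ii)} is immediate from the definition of $\overline N_\alpha(g,\cdot)$ as a supremum over $\{0<\delta\le\cdot\}$: enlarging the set can only enlarge the supremum.

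\textbf{Part (iii)} is the main obstacle. The key algebraic observation is that, on $\{\omega_1,\omega_2,\omega_3,\omega_4\in[0,\varepsilon]\}$ with $\omega_4=\omega_1+\omega_2-\omega_3$, expanding $\varphi_\varepsilon(x)=1-2x/\varepsilon+x^2/\varepsilon^2$ and using $\omega_1+\omega_2=\omega_3+\omega_4$ yields
\begin{equation*}
\varphi_\varepsilon(\omega_3)+\varphi_\varepsilon(\omega_4)-\varphi_\varepsilon(\omega_1)-\varphi_\varepsilon(\omega_2)
=\frac{2(\omega_1-\omega_3)(\omega_2-\omega_3)}{\varepsilon^2}.
\end{equation*}
The bracket is therefore non-negative on the region $R=\{\omega_3\le\min(\omega_1,\omega_2),\,\omega_i\le\varepsilon\}$, on which $\Phi=\sqrt{\omega_3}$ so the kinematic factor $\Phi/\sqrt{\omega_1\omega_2\omega_3}$ reduces to $1/\sqrt{\omega_1\omega_2}$. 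I would restrict $\mathscr K(\varphi_\varepsilon,g)$ to $R$ and further to $\{\omega_3\le\tfrac12\min(\omega_1,\omega_2)\}$, on which $(\omega_1-\omega_3)(\omega_2-\omega_3)\ge\omega_1\omega_2/4$, split the $\omega_3$-integration dyadically, and use the elementary bound $g([0,\delta])\ge\delta^\alpha\underline N_\alpha(g,\varepsilon)$ (valid for every $\delta\le\varepsilon$ since $N_\alpha(g,\delta)\le\delta^{-\alpha}g([0,\delta])$ and $N_\alpha(g,\delta)\ge\underline N_\alpha(g,\varepsilon)$ by definition). Matching the residual $\omega_1,\omega_2$-moments against $[A_{(1-\alpha)/2}(g,\varepsilon)]^2$ is a delicate power-counting in which the constraint $\alpha\in[0,1/2]$ is exactly what makes the dimensional balance close. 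The argument parallels the one of Lu \cite{XL} for the Nordheim equation, with the adjustment required by the kinematic prefactor of the Schr\"odinger WKE.

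\textbf{Part (iv).} For $x>0$, direct computation ($\int_x^\varepsilon x\tau^{-2}\,\dd\tau=1-x/\varepsilon$) gives the identity
\begin{equation*}
(1-x/\varepsilon)\,\mathbf{1}_{(0,\varepsilon]}(x)=\int_0^\varepsilon\frac{x\,\mathbf{1}_{\{x\le\tau\}}}{\tau^2}\,\dd\tau.
\end{equation*}
Squaring, integrating against $\dd g$ and applying Minkowski's integral inequality in $L^2(\dd g)$ then yield
\begin{equation*}
\Bigl(\int_{[0,\varepsilon]}(1-x/\varepsilon)^2\dd g(x)\Bigr)^{1/2}
\le\int_0^\varepsilon\tau^{-2}\Bigl(\int_{[0,\tau]}x^2\dd g(x)\Bigr)^{1/2}\dd\tau.
\end{equation*}
Multiplication by $\varepsilon^{-\alpha/2}$ followed by the substitution $\tau=\varepsilon\sigma$ produces the claimed inequality. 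The hypothesis $g(\{0\})=0$ is exactly what is needed for the above identity to hold $\dd g$-almost everywhere (its right-hand side vanishes at $x=0$, whereas $(1-0/\varepsilon)^2=1$).
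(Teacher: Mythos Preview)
Your treatment of (i), (ii) and (iv) is fine. In fact your proof of (iv) via the integral representation $(1-x/\varepsilon)_+=\int_0^\varepsilon x\mathbf 1_{\{x\le\tau\}}\tau^{-2}\dd\tau$ together with Minkowski's inequality is cleaner than the paper's route, which differentiates $N_0(g,\varepsilon)$ in $\varepsilon$, applies Cauchy--Schwarz to obtain $\partial_\varepsilon\sqrt{N_0}\le\varepsilon^{-1}\sqrt{A_0}$, and integrates back. Both yield the same inequality with constant~$1$.

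Part (iii), however, has a genuine gap. Your algebraic identity for the bracket on $\{\omega_i\le\varepsilon\}$ is correct, but you then simply ``restrict $\mathscr K(\varphi_\varepsilon,g)$ to $R=\{\omega_3\le\min(\omega_1,\omega_2),\,\omega_i\le\varepsilon\}$''. This only gives a lower bound if the integral over the complement of $R$ is non-negative, and that is \emph{not} automatic: on the region $\min(\omega_1,\omega_2)<\omega_3<\max(\omega_1,\omega_2)$ the bracket $2(\omega_1-\omega_3)(\omega_2-\omega_3)/\varepsilon^2$ is negative, and once some $\omega_i>\varepsilon$ the truncated quadratic no longer admits your expansion at all. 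The paper resolves this by first symmetrizing in $(\omega_1,\omega_2)$ and decomposing $\mathscr K$ into two pieces, one carrying the second symmetric difference $\Delta_{\mathrm{sym}}\varphi_\varepsilon(\omega_3)=\varphi_\varepsilon(\omega_3+\omega_2-\omega_1)+\varphi_\varepsilon(\omega_3-\omega_2+\omega_1)-2\varphi_\varepsilon(\omega_3)$ and the other a first difference $\Delta\varphi_\varepsilon$; \emph{both} are non-negative by convexity of $\varphi_\varepsilon$ on $[0,\infty)$, so one may legitimately drop the second and restrict the first to $\{\omega_3\le\varepsilon\}$. This symmetrization is the missing idea.

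Even granting that step, your subsequent plan (restrict further to $\omega_3\le\tfrac12\min(\omega_1,\omega_2)$, bound $g([0,\delta])\ge\delta^\alpha\underline N_\alpha(g,\varepsilon)$, then ``delicate power-counting'') is both lossy in constants---so it cannot give (iii) as stated with constant~$1$---and vague at the crucial point. The paper instead observes that on $\{0\le\omega_1<\omega_2\le\omega_3\le\varepsilon\}$ one has $\Delta_{\mathrm{sym}}\varphi_\varepsilon\ge(\omega_2-\omega_1)^2/\varepsilon^2$, which after integrating out $\omega_1$ produces exactly the factor $\omega_2^{-\alpha}\int_0^{\omega_2}(1-\omega_1/\omega_2)^2\dd g(\omega_1)=N_\alpha(g,\omega_2)\ge\underline N_\alpha(g,\varepsilon)$; the remaining $(\omega_2,\omega_3)$-integral is then bounded below by $\bigl[\varepsilon^{(\alpha-1)/2}\int_0^\varepsilon(\omega/\varepsilon)^{3/2+\alpha}\dd g\bigr]^2$, and the constraint $\alpha\le 1/2$ enters precisely to compare $(\omega/\varepsilon)^{3/2+\alpha}$ with $(\omega/\varepsilon)^2$. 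This direct factorization, rather than a dyadic argument on $\omega_3$, is what closes the estimate with the correct constant.
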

\begin{proof} The proofs of these properties are contained in \cite{XL, XL2}. We include them here for the sake of completeness. 
The first part (i) follows directly by computing $ \frac{\dd }{\dd t} N_\alpha(g(t), \varepsilon ) $ and applying the definition of a distributional
solution of our equation for the test function $\varphi_\varepsilon$. Item (ii) is obvious from the definition of the functionals. Regarding item (iv): we may neglect the point $x=0$ as $g(\{0\})=0$ and rather work with the functional $N_0 (g,\varepsilon)  = \int_{(0,\varepsilon]} \left( 1- \frac{x}{\varepsilon } \right)^2 dg(x)$. We keep the same notation to avoid unnecessary complications. We write
\begin{align*}
 \frac{\partial }{\partial \varepsilon} N_0(g, \varepsilon ) & = 2 \int_0^\infty  \frac{\left( 1- \frac{x}{\varepsilon}\right)_+}{\varepsilon} \frac{x}{\varepsilon}  \dd g(x) \leq \frac{2}{\varepsilon} \sqrt{N_0(g, \varepsilon )} \sqrt{A_0(g, \varepsilon) }
\end{align*} 
due to Cauchy-Schwarz. So for all $\kappa >0$, 
$$ \frac{\partial }{\partial \varepsilon} \sqrt{N_0(g, \varepsilon ) + \kappa} \lesssim  \varepsilon^{-1}\sqrt{A_0(g, \varepsilon) }.$$ Now integrating over $[\kappa,\varepsilon_0]$ we get 
$$  \sqrt{N_0(g, \varepsilon_0 ) + \kappa}  -  \sqrt{N_0(g, \kappa ) + \kappa} \lesssim \int_\kappa^{\varepsilon_0}  \varepsilon^{-1}\sqrt{A_0(g, \varepsilon) } \dd \varepsilon, $$
and sending $\kappa \to 0^+$, we get 
$$ \sqrt{N_0(g, \varepsilon_0 )} 
 \lesssim \int_0^{\varepsilon_0}  \varepsilon^{-1}\sqrt{A_0(g, \varepsilon) } \dd \varepsilon 
 = \int_0^1 \sigma^{-1}\sqrt{A_0(g, \varepsilon \sigma) }    \dd \sigma.$$
 Multiply by $  \varepsilon^{-\alpha/2}$ for $\alpha>0$ and use that $A_0(g, \varepsilon \sigma) = (\varepsilon\sigma)^{\alpha} A_\alpha (g, \varepsilon\sigma)$ to prove the claim. For the item (iii), we first note that the collision operator may be rewritten as follows by decomposing the frequency space into $4$ regions: 
 \begin{align} 
& \mathscr K (\varphi_\varepsilon, g)=\iiint_{\left[  0,\infty\right)^3 }\frac{g_{1} g_{2} g_{3} 
\Phi }{\sqrt{\omega_{1}\omega_{2}\omega_{3}}} \left[  \varphi_\varepsilon \left(
\omega_{1}+\omega_{2}-\omega_{3}\right)  +\varphi_\varepsilon \left(  \omega_{3}\right)
-\varphi_\varepsilon \left(  \omega_{1}\right)  -\varphi_\varepsilon \left(  \omega_{2}\right)  \right]
\dd \omega_{1}\dd \omega_{2}\dd \omega_{3} \nonumber \\ 
& = 2 \int_{0 \leq \omega_1 <\omega_2<\omega_3} \cdots \ + 2 \int_{0 \leq \omega_2 <\omega_1<\omega_3} \cdots \ + \int_{0 \leq \omega_1 <\omega_3=\omega_2} \cdots \ + \int_{\substack{\omega_2 \geq 0 \\ \omega_3<\omega_1}} \cdots \nonumber \\
&  =  2 \int_{0 \leq \omega_1 <\omega_2<\omega_3}
 \frac{ \Delta_{\text{sym}}\varphi_\varepsilon }{\sqrt{\omega_2}\sqrt{\omega_3}} \dd g (\omega_1)\dd g (\omega_2) \dd g (\omega_3) +  \int_{0 \leq \omega_1 <\omega_3=\omega_2} \cdots \ + \int_{\substack{\omega_2 \geq 0 \\ \omega_3<\omega_1}} \cdots \nonumber \\ 
 \label{eq: K_1}
 & =  2 \int_{0 \leq \omega_1 <\omega_2\leq \omega_3} \chi_{\omega_2\omega_3}
  \frac{ \Delta_{\text{sym}}\varphi_\varepsilon }{\sqrt{\omega_2 \omega_3}} \dd g (\omega_1)\dd g (\omega_2) \dd g (\omega_3) + 
  \\ & \hspace{4cm}+ \int_{\substack{\omega_2 \geq 0 \label{eq: K_2}
   \\ \omega_3<\omega_1}} \sqrt{\frac{(\omega_2+\omega_3-\omega_1)_+}{\omega_1\omega_2\omega_3}} \Delta \varphi_\varepsilon \dd g (\omega_1)\dd g (\omega_2) \dd g (\omega_3)
 \end{align} 
 where in the third line we used the symmetries between $\omega_1$ and $\omega_2$, $\chi_{\omega_2\omega_3}$ is defined by $\chi_{\omega_2\omega_3}=21_{\omega_2<\omega_3} +1_{\omega_2=\omega_3}$ and $\Delta_{\text{sym}}\varphi_\varepsilon =\varphi_\varepsilon(\omega_3+\omega_2-\omega_1) +\varphi_\varepsilon(\omega_3+\omega_1-\omega_1) - 2\varphi_\varepsilon(\omega_3)  $ while  $\Delta \varphi_\varepsilon =\varphi_\varepsilon(\omega_1) +\varphi_\varepsilon( (\omega_2+\omega_3-\omega_1)_+) - \varphi_\varepsilon(\omega_3)-\varphi_\varepsilon(\omega_2)$. We also notice that 
 $$\Delta_{\text{sym}}\varphi_\varepsilon = (\omega_2 - \omega_1)^2 \iint_{[0,1]^2} \varphi_\varepsilon^{''} (\omega_3 + (s-t)(\omega_2 -\omega_1)) \dd s \dd t,\  \forall\ 0< \omega_1 \leq \omega_2\leq \omega_3, $$
and since $\varphi_\varepsilon^{''} (x) = 2\varepsilon^{-2}1_{[0,1]}(\varepsilon^{-1}x)$ for all $ 0\leq x <\varepsilon$, we have the lower bound 
$$\Delta_{\text{sym}}\varphi_\varepsilon \geq (\omega_2 - \omega_1)^2 \varepsilon^{-2},\  \forall\ 0< \omega_1 \leq \omega_2\leq \omega_3 \leq \varepsilon.$$
We use these to lower bound the operator in \eqref{eq: K_1} and  \eqref{eq: K_2}. Of course for both of them we have a positive sign for free, due to the convexity of the test function $\varphi_{\varepsilon}$. We use the positive sign to bound \eqref{eq: K_2}. While for \eqref{eq: K_1} we write 
\begin{align*} 
\eqref{eq: K_1}&\geq \varepsilon^{-2} \int_{0\leq \omega_1 < \omega_2 \leq \omega_3 <\varepsilon} 
 \chi_{\omega_2\omega_3} \frac{ (\omega_2 - \omega_1)^2 }{ \sqrt{\omega_2\omega_3}} 
\dd g (\omega_1)
\dd g (\omega_2) \dd g (\omega_3) \\
&  = 
\varepsilon^{-2} \int_{0< \omega_2 \leq \omega_3 <\varepsilon} 
 \chi_{\omega_2\omega_3} \frac{\omega_2^{\frac{3}{2}+\alpha}}{ \sqrt{\omega_3}} \left[\omega_2^{-\alpha} \int_{0}^{\omega_2} \left( 1- \frac{\omega_1}{\omega_2}\right)^2 \dd g (\omega_1)  \right]
\dd g (\omega_2) \dd g (\omega_3) \\
& \geq \varepsilon^{-2}  \underline{N}_\alpha (g, \varepsilon )  \int_{0< \omega_2 \leq \omega_3 <\varepsilon} 
 \chi_{\omega_2\omega_3} \frac{\omega_2^{\frac{3}{2}+\alpha}}{ \sqrt{\omega_3}} \dd g (\omega_2) \dd g (\omega_3)  \\
 &\geq   \underline{N}_\alpha (g, \varepsilon )  \left[ \varepsilon^{\frac{\alpha-1}{2}}  \int_0^\varepsilon \left(\frac{\omega_2}{\varepsilon}  \right)^{\frac{3}{2} + \alpha}    \dd g (\omega_2) \right]^2\geq \underline{N}_\alpha (g, \varepsilon )  \left[ A_{\frac{1-\alpha}{2}}(g,\varepsilon)   \right]^2, 
\end{align*} 
where in the last line we employed the more general inequality (due to the definition of $ \chi_{\omega_2\omega_3}$): 
$$\left[ \int_0^\varepsilon \omega_2^\gamma  \dd g (\omega_2) \right]^2 \leq \varepsilon^{\gamma + \beta} \iint_{0 < \omega_2 \leq \omega_3 < \varepsilon} \chi_{\omega_2\omega_3} \omega_2^{\gamma} \omega_3^{-\beta}
\dd g (\omega_2) \dd g (\omega_3)$$
for positive $\varepsilon, \gamma, \beta$, here applied to $\beta=1/2$ and $\gamma = \alpha+ 3/2$. For the last bound, we also used that $\alpha\leq 1/2$.   In total we have
$$  \mathscr K (\varphi_\varepsilon, g) \geq \underline{N}_\alpha (g, \varepsilon )  \left[ A_{\frac{1-\alpha}{2}}(g,\varepsilon)   \right]^2, $$ as required. 
\end{proof}

\begin{prop}
Suppose that $f$ is a non negative solution of (\ref{eq:NL WKEC10}) in the weak sense  (\ref{Z2E1}) that conserves the mass and energy and $g$ is the measure such that $g(t,\omega )=\sqrt \omega f(t, \omega )$, with initial data $f_0$. Then, there exists a numerical constant $C>0$ such that
\begin{align*}
g(t, \{0\})>0, \,\,\,\forall t>t_*:=C\left( \underline{N}_{1/2}(g_0) \overline{N}_{1/2}(g_0)\right)^{-1}.
\end{align*}
\end{prop}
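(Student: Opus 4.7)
The plan is to proceed by contradiction. Suppose some $T>t_\ast$ satisfies $g(T,\{0\})=0$. Testing the weak formulation (\ref{Z2E1}) with $\varphi_\varepsilon$ and letting $\varepsilon\to 0^+$ shows $t\mapsto g(t,\{0\})$ is non-decreasing (since $\mathscr K(\varphi_\varepsilon,\cdot)\ge 0$ by convexity of $\varphi_\varepsilon$), hence $g(t,\{0\})=0$ for every $t\in[0,T]$. I will exhibit a monotone-in-$t$ functional of $g(t)$ satisfying a Riccati-type inequality that forces blow-up in time at most $t_\ast$, contradicting the continuity of $g$ into $\mathcal M^\rho_+$.

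The monitored functional is $\Psi(t):=\sup_{0<\delta\le\delta^\ast}N_{1/2}(g(t),\delta)$ for a well-chosen $\delta^\ast>0$. Using that $\overline{N}_{1/2}(g_0)=\limsup_{\delta\to 0^+}N_{1/2}(g_0,\delta)$ and that $\underline{N}_{1/2}(g_0,\varepsilon)$ increases to $\underline{N}_{1/2}(g_0)$ as $\varepsilon\to 0^+$, one can fix $\delta^\ast>0$ small with
\[
\underline{N}_{1/2}(g_0,\delta^\ast)\ge \tfrac{1}{2}\underline{N}_{1/2}(g_0),\qquad N_{1/2}(g_0,\delta^\ast)\ge \tfrac{1}{2}\overline{N}_{1/2}(g_0),
\]
so $\Psi(0)\ge \overline{N}_{1/2}(g_0)/2$. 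Lemma \ref{lem:properties of N}(i) with $\alpha=1/2$ yields $t\mapsto N_{1/2}(g(t),\delta)$ non-decreasing for each $\delta$, hence $\Psi$ and $\underline{N}_{1/2}(g(t),\delta^\ast)$ are non-decreasing in $t$; in particular $\underline{N}_{1/2}(g(t),\delta^\ast)\ge \underline{N}_{1/2}(g_0)/2$ throughout $[0,T]$.

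The heart of the proof is a Riccati lower bound obtained by matching exponents in items (iii) and (iv). Apply (iv) to $g(t)$ with $\alpha=1/4$ at scale $\delta^\ast$: since $\int_0^1\sigma^{-7/8}\,{\rm d}\sigma=8$, there is $\sigma_t\in(0,1]$ such that
\[
A_{1/4}(g(t),\delta^\ast\sigma_t)\ \ge\ \tfrac{1}{64}(\delta^\ast)^{1/4}\Psi(t).
\]
At the scale $\varepsilon_t:=\delta^\ast\sigma_t\le\delta^\ast$, item (iii) with $\alpha=1/2$ then produces
\[
\mathscr K(\varphi_{\varepsilon_t},g(t))\ \ge\ \underline{N}_{1/2}(g(t),\varepsilon_t)\,[A_{1/4}(g(t),\varepsilon_t)]^2\ \ge\ c_0\,\underline{N}_{1/2}(g_0)\,(\delta^\ast)^{1/2}\,\Psi(t)^2
\]
with $c_0>0$ numerical. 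Feeding this into item (i) at the varying scale $\varepsilon_t$ and passing back to the monotone functional $\Psi$ through a Dini-derivative / measurable-selection argument in the spirit of \cite{XL,XL2} yields
\[
\frac{d^+\Psi(t)}{dt}\ \ge\ c_1\,\underline{N}_{1/2}(g_0)\,\Psi(t)^2.
\]
Integration gives $\Psi(t)=+\infty$ for some $t\le 2(c_1\,\underline{N}_{1/2}(g_0)\,\overline{N}_{1/2}(g_0))^{-1}$, which coincides with $t_\ast$ on setting $C=2/c_1$.

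The contradiction is reached because under the standing hypothesis $g(s,\{0\})=0$ on $[0,T]$, the continuity of $g$ in $\mathcal M^\rho_+$ combined with the propagated control of the $N_{1/2}$-type quantities from the initial data keeps $\Psi(t)$ finite on $[0,T]$. The main obstacle I anticipate is precisely the last transfer step: because the optimising scale $\sigma_t$ coming from item (iv) need not depend regularly on $t$, one cannot differentiate $N_{1/2}(g(t),\varepsilon_t)$ directly, so the monotone supremum $\Psi$ must be used together with a measurable selection of $\varepsilon_t$ to make rigorous the Dini-derivative step that closes the Riccati inequality; this is the technical core, following the Nordheim-equation scheme of \cite{XL}.
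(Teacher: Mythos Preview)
Your Riccati scheme does not close, and the gap is precisely where you flag it, but deeper than a measurable-selection issue. When you apply item (iv) with $\alpha=1/4$ at the fixed scale $\delta^\ast$, you obtain $A_{1/4}(g(t),\delta^\ast\sigma_t)\ge\tfrac{1}{64}N_{1/4}(g(t),\delta^\ast)=\tfrac{1}{64}(\delta^\ast)^{1/4}N_{1/2}(g(t),\delta^\ast)$, and $N_{1/2}(g(t),\delta^\ast)$ is in general \emph{strictly smaller} than $\Psi(t)=\sup_{\delta\le\delta^\ast}N_{1/2}(g(t),\delta)$; your displayed lower bound with $\Psi(t)$ on the right is therefore unjustified. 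If instead you apply (iv) at the optimising scale $\delta_t$ where the supremum $\Psi(t)$ is attained, then (iii) and (i) only yield growth of $N_{1/2}(g(\cdot),\varepsilon_t)$ at the \emph{sub}-scale $\varepsilon_t=\delta_t\sigma_t$, and $N_{1/2}(g(t),\varepsilon_t)$ can sit far below $\Psi(t)$; hence the Dini derivative of $\Psi$ is not bounded below by $c\,\underline N_{1/2}(g_0)\Psi(t)^2$. In short, the scale at which (iv) produces a large $A_{1/4}$ and the scale at which $\Psi$ is attained never match, so a single-exponent Riccati cannot be closed from items (i)--(iv) alone. A further difficulty with the endgame is that $\Psi(t)=+\infty$ does not by itself contradict $g(t,\{0\})=0$ (e.g.\ a density behaving like $\omega^{-a}$ near $0$ with $a\in(1/2,1)$ already gives $\overline N_{1/2}=+\infty$ without any atom), so the ``finiteness'' you invoke at the end is not available.

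The paper circumvents exactly this scale mismatch by working not at a fixed $\alpha=1/2$ but with the whole family $\overline N_\alpha$ for $\alpha\in[0,1/2]$: combining (i), (iii), (iv) with Cauchy--Schwarz in time produces the square-root recursion $\overline N_{\frac14+\frac\alpha2}(g(s),\varepsilon)\le C(t-s)^{-1/2}\bigl(\overline N_\alpha(g(t),\varepsilon)/\underline N_{1/2}(g_0)\bigr)^{1/2}$, which halves the distance from $\alpha$ to $1/2$ at each step. One then iterates along $\alpha_n=\tfrac12-2^{-n}$ and dyadic times $t_k=(1-2^{-k})T$; the iteration terminates because at $\alpha$ close to $0$ one has the uniform bound $\overline N_\alpha\le m_0$ by mass conservation. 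Passing $n\to\infty$ gives $\overline N_{1/2}(g_0)\underline N_{1/2}(g_0)\le C/T$, which is the desired contradiction. The essential idea you are missing is this telescoping in the exponent $\alpha$, which replaces the unavailable pointwise Riccati by an integrated square-root inequality anchored to the conserved mass at the far end of the iteration.
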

\begin{proof} 
We are going to show that there exists a numerical constant $C>0$ such that if $g(t, \{0\})=0$ for all $t \in [0,T]$,  then
\begin{align} \label{eq:mainbound}
\underline{N}_{1/2}(g_0) \overline{N}_{1/2}(g_0) \leq C T^{-1}.
\end{align} 
We first observe that by definition for any $\lambda \in (1/2,1)$, there exists $\varepsilon _\lambda>0$ so that $\underline{N}_{1/2} (g_0, \varepsilon )>\lambda \underline{N}_{1/2}(g_0)$ for all $\varepsilon  \in (0,\varepsilon _\lambda].$ Thus combined with the item (ii) in the Lemma \ref{lem:properties of N}, $$\underline{N}_{1/2}(g(t), \varepsilon ) \geq \underline{N}_{1/2}(g_0, \varepsilon ) > \lambda \underline{N}_{1/2}(g_0).$$ Moreover from item (i) and (iii) of the same Lemma \ref{lem:properties of N}, 
\begin{align*} 
N_{\alpha}(g(t), \varepsilon ) &\geq \varepsilon ^{-\alpha}
\int_s^t \mathscr K(\varphi _\varepsilon , g(\tau )) \dd \tau
\geq \int_s^t \underline{N}_{1/2} (g(\tau ), \varepsilon )\left[ A_{\frac{1}{4} + \frac{\alpha}{2}} (g(\tau ), \varepsilon ) \right]^2 \dd \tau.   \\
& \geq  \underline{N}_{1/2} (g_0, \varepsilon )  \int_s^t \left [ A_{\frac{1}{4}+ \frac{\alpha}{2}} ( g(\tau), \varepsilon ) \right]^2
d\tau  > \lambda \underline{N}_{1/2}(g_0)   \int_s^t  \left[ A_{\frac{1}{4}+ \frac{\alpha}{2}} ( g(\tau ), \varepsilon ) \right]^2 \dd \tau.
 \end{align*} 
In other words, for all $\varepsilon  \in (0,\varepsilon _\lambda]$:
\begin{align} \label{eq:bound on time averaged A}
\int_s^t  \left[ A_{\frac{1}{2}} ( g(\tau ), \varepsilon ) \right]^2
\dd \tau < \frac{N_{\alpha}(g(t), \varepsilon ) }{ \lambda \underline{N}_{1/2}(g_0) }.
\end{align}
Now we employ item (iv) of the Lemma \ref{lem:properties of N} to get that for all $\tau \in [s,t]$
\begin{align*}
\sqrt{N_{ \frac{1}{4} + \frac{\alpha}{2}} (g(\tau ), \varepsilon  )  }  \leq \int_0^1 \sigma^{\frac{1}{8} + \frac{\alpha}{4} -1}\left[ A_{\frac{1}{4} + \frac{\alpha}{2}} ( g(\tau ), \varepsilon \sigma ) \right]^{1/2}  \dd \sigma. 
\end{align*} 
This implies 
\begin{align*} 
&\int_s^t  N_{ \frac{1}{4} + \frac{\alpha}{2}} (g(\tau ), \varepsilon  ) \dd \tau \leq
\int_s^t  \left(  \int_0^1 \sigma^{\frac{1}{8} + \frac{\alpha}{4} -1} 
\left[ A_{\frac{1}{4} + \frac{\alpha}{2}} ( g(\tau ), \varepsilon \sigma )\right ]^{1/2}
\dd \sigma  \right)^2  \dd \tau\\ 
&  \lesssim  
\int_s^t  \int_0^1 \sigma^{\frac{1}{8} + \frac{\alpha}{4} -1} 
A_{\frac{1}{4} + \frac{\alpha}{2}} ( g(\tau ), \varepsilon \sigma ) \dd \sigma  \dd \tau  = \int_0^1 
\sigma^{\frac{1}{8} + \frac{\alpha}{4} -1}   \int_s^t 
A_{\frac{1}{4} + \frac{\alpha}{2}} ( g(\tau ), \varepsilon \sigma )  \dd \tau \dd \sigma \\
& \lesssim \left( \frac{1}{2}+\alpha \right)^{-1} \int_0^1  
\sigma^{\frac{1}{8} + \frac{\alpha}{4} -1} \sqrt{t-s} \left( \int_s^t A_{\frac{1}{4} + \frac{\alpha}{2}}   ( g(\tau ), \varepsilon \sigma )^2  \dd \tau \right)^{1/2} \dd \sigma \\
& \lesssim 
\left( \frac{1}{2}+\alpha \right)^{-1}  \int_0^1 \sigma^{\frac{1}{8} + \frac{\alpha}{4} -1} \sqrt{t-s} 
\sqrt{ \frac{N_{\alpha}(g(t), \varepsilon  ) }{ \lambda \underline{N}_{1/2}(g_0)} } \dd \sigma\lesssim 
\left( \frac{1}{2}+\alpha \right)^{-2} \sqrt{t-s} \sqrt{ \frac{ \overline{N}_{\alpha}(g(t), \varepsilon  ) }{ \lambda \underline{N}_{1/2}(g_0)}},
\end{align*} 
where we applied twice Cauchy-Schwarz, Fubini, the estimate \eqref{eq:bound on time averaged A} and also utilised item (ii). Due to the monotonicity properties, this yields for all $0\leq s <t \leq T$: 
\begin{align*} 
N_{ \frac{1}{4} + \frac{\alpha}{2}} (g(s), \varepsilon  ) \leq \frac{C}{\sqrt{t-s}} \sqrt{\frac{ \overline{N}_{\alpha}(g(t), \varepsilon  ) }{ \lambda \underline{N}_{1/2}(g_0)}} \left( \frac{1}{2}+\alpha \right)^{-2},  \text{ for all } \varepsilon  \in (0, \varepsilon _\lambda], 
\end{align*} 
which due to the monotonicity of $\bar{N}_\alpha$ in the second variable, gives for all $0\leq s <t \leq T$:
\begin{align} \label{eq:bound on N}
\bar{N}_{ \frac{1}{4} + \frac{\alpha}{2}} (g(s), \varepsilon  ) 
\leq \frac{C}{\sqrt{t-s}} \sqrt{\frac{ \overline{N}_{\alpha}(g(t), \varepsilon  ) }{ \lambda \underline{N}_{1/2}(g_0)}} \left( \frac{1}{2}+\alpha \right)^{-2},  \text{ for all } \varepsilon  \in (0, \varepsilon _\lambda]
\end{align} 
for some numerical constant $C$ independent of $\alpha, \varepsilon $ and the time variables. 
Now since this holds for any $\alpha \in [0,1/2]$ we proceed by applying the estimate \eqref{eq:bound on N} iteratively where each step of iteration consists of $\alpha_i$ converging to $1/2$ from below and time step converging to $T$. To be concrete, we fix $\varepsilon  \in (0, \varepsilon _\lambda ]$, and consider $\alpha_n := \frac{1}{2} -  \frac{1}{2^n} $ and $t_k = \left( 1- 2^{-k} \right) T$, $k \in \mathbb{N}, n \in \mathbb{N}^*$.  The iteration process is to apply the estimate \eqref{eq:bound on N} for $\alpha = \alpha_{n-k}, s=t_{k-1}< t_k = t$ for $k \in [n-1]$, $n\geq 2$. We then have for each $k \in [n-1]$,  
\begin{align*} 
a_{k-1}:= \overline{N}_{ \alpha_{n-(k-1)}} (g(t_{k-1}), \varepsilon  ) 
& \leq \frac{C_1 2^{k/2}}{\sqrt{T}}  \sqrt{\frac{ \overline{N}_{\alpha_{n-k}}(g(t_k), \varepsilon  ) }{ \lambda \underline{N}_{1/2}(g_0)}} \left( 1-2^{k-n} \right)^{-2}\\
&=: b_k \sqrt{a_k}
\end{align*} 
for some numerical constant $C_1$ independent of $\alpha, \varepsilon , \lambda$ and the time variables. We define here $b_k:= \frac{C_1 2^{k/2}}{\sqrt{T}}  \frac{\left( 1-2^{k-n} \right)^{-2}}{ \sqrt{\lambda \underline{N}_{1/2}(g_0)}}$. First thing to observe is that since the mass is conserved, we have $a_{n-1} \leq m_0$, $m_0 = \int_{[0,\infty)} \dd g_0$, and thus $a_i < \infty$ for all $i \in [n-1]$. Moreover we have
\begin{align} \label{eq: iter form bounds1}
a_0 = \bar{N}_{\alpha_n}(g_0, \varepsilon ) \leq \left[ \prod_{k=1}^{n-1} b_k^{1/(2^{k-1})} \right] a_{n-1}^{1/(2^{n-1})} \leq \left[ \prod_{k=1}^{n-1}b_k^{1/(2^{k-1})} \right] m_0^{1/(2^{n-1})},
\end{align} 
while the left-hand side is lower bounded by 
$$ N_{\alpha_n}(g_0, \varepsilon )  = N_{\frac{1}{2} - \frac{1}{2^n}}(g_0, \varepsilon ) 
= \varepsilon ^{2^{-n}} N_{1/2} (g_0, \varepsilon ). $$ 
Inserting this into \eqref{eq: iter form bounds1} above, we get 
\begin{align} \label{eq: iter form bounds2}  
N_{1/2} (g_0, \varepsilon ) \leq \varepsilon ^{-2^{-n}} m_0^{1/(2^{n-1})} 
\left[ \prod_{k=1}^{n-1} 
\left( C_1 2^{k/2} \frac{\left( 1-2^{k-n} \right)^{-2}}{\sqrt{T \lambda \underline{N}_{1/2}(g_0) }} \right)^{1/(2^{k-1})} \right]. 
\end{align}
Finally we pass to the limit as $n\to \infty$, the product in the right-hand side of \eqref{eq: iter form bounds2} is converging to $$\prod_{k\geq 1} C_2^{2/2^k} 2^{k/2^k} \lim_n \prod_{k=1}^{n-1}  \left( 1-2^{k-n} \right)^{-2^{2-k}} \sim (2C_2)^2, $$
where $C_2 = C_1 (T \lambda \underline{N}_{1/2}(g_0))^{-1/2}$. Therefore, 
$$ N_{1/2} (g_0, \varepsilon ) \leq \frac{C_3}{ T \lambda \underline{N}_{1/2}(g_0)}, $$ which implies after taking the $\limsup$ in $\varepsilon $ in the left-hand side that 
$$ \bar{N}_{1/2}(g_0) \leq \frac{C_3}{ T \lambda \underline{N}_{1/2}(g_0)} $$ or since all these quantities are finite, that 
$$ \bar{N}_{1/2}(g_0)  \underline{N}_{1/2}(g_0) \leq \frac{C_3}{T \lambda}, $$
which implies \eqref{eq:mainbound} since $\lambda$ is arbitrary in $(1/2,1)$. 
\end{proof}  
\begin{cor}
\label{corcond}
There exists a numerical constant $C>0$ such that, for all $A>0$, $B>0$, and  $g_0\in \mathcal{M}^\rho _{+}\left(\left[  0,\infty\right)\right) $ for some $\rho <-1$, if 
\begin{equation*}
f_0(\omega )=A\omega ^{-1}\1 _{ 0<\omega <B }+\min (A\omega ^{-1}, g_0(\omega ))\1 _{ \omega >B },
\end{equation*}
there exists a non negative solution $f$  of (\ref{eq:NL WKEC10}) in the weak sense  (\ref{Z2E1}) that conserves the mass and energy and such that,
\begin{equation*}
g(t, \{0\})>0,\,\,\forall t>C A^{-2}.
\end{equation*}
\end{cor}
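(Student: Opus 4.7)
The plan is to check that the initial measure meets the hypotheses of the previous Proposition and to compute the quantities $\underline{N}_{1/2}$ and $\overline{N}_{1/2}$ explicitly. Write $\widetilde g_0(\omega):=\sqrt{\omega}\,f_0(\omega)$ for the initial datum in the $g$-variable, distinguishing it from the measure $g_0$ appearing in the hypothesis. First I would verify that $\widetilde g_0\in \mathcal{M}^{\rho'}_+([0,\infty))$ for some $\rho'<-1$: near the origin $\widetilde g_0(\omega)=A\omega^{-1/2}$, which is finite-mass integrable, while for $\omega>B$ one has $\widetilde g_0(\omega)\le g_0(\omega)$ (reading $f_0$ in the form given in Theorem~\ref{Thm}), so the decay of $g_0$ transfers. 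Theorem~2.19 and Proposition~2.28 of \cite{EV3} then produce a non-negative weak solution $f$ of \eqref{eq:NL WKEC10} with this initial datum, conserving mass and energy, whose associated measure $g=\sqrt{\omega}\,f$ satisfies the hypotheses of the Proposition.

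Next I would compute $N_{1/2}(\widetilde g_0,\varepsilon)$ for small $\varepsilon$. For $\varepsilon\le B$ one has $\mathrm{d}\widetilde g_0(x)=A\,x^{-1/2}\mathrm{d}x$ on $(0,\varepsilon)$, so by the substitution $u=x/\varepsilon$ and the Beta identity $B(1/2,3)=16/15$,
\[
N_0(\widetilde g_0,\varepsilon) \;=\; A\int_0^\varepsilon\Bigl(1-\tfrac{x}{\varepsilon}\Bigr)^{\!2} x^{-1/2}\,\mathrm{d}x \;=\; A\,\varepsilon^{1/2}\int_0^1 (1-u)^2 u^{-1/2}\,\mathrm{d}u \;=\; \tfrac{16}{15}\,A\,\varepsilon^{1/2}.
\]
Dividing by $\varepsilon^{1/2}$ shows that $N_{1/2}(\widetilde g_0,\varepsilon)=\tfrac{16}{15}A$ is \emph{constant in $\varepsilon$} on $(0,B]$; passing to $\varepsilon\to 0^+$ yields $\underline{N}_{1/2}(\widetilde g_0)=\overline{N}_{1/2}(\widetilde g_0)=\tfrac{16}{15}A$.

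Finally, the Proposition applied to $f$ yields $g(t,\{0\})>0$ for every $t>C_0\bigl(\underline{N}_{1/2}(\widetilde g_0)\overline{N}_{1/2}(\widetilde g_0)\bigr)^{-1}=C_0(15/16)^2 A^{-2}$, which is the announced bound $t>CA^{-2}$ after absorbing the numerical factor into a new constant $C$.

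There is no substantive obstacle at this step: the heavy lifting has already been done in the Proposition and in the existence result of \cite{EV3}. The corollary reduces to the Beta-integral identity combined with the structural observation that the $A\omega^{-1}$ profile of $f_0$ near the origin produces a concentration functional $N_{1/2}(\widetilde g_0,\cdot)$ of size exactly of order $A$, independently of the cutoff $B$ and of the tail measure $g_0$; the only bookkeeping is to confirm that $\widetilde g_0$ lies in $\mathcal{M}^{\rho'}_+$ so that the existence theorem of \cite{EV3} applies.
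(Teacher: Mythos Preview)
Your proposal is correct and fills in exactly the computation the paper leaves implicit: the paper gives no proof of the corollary, treating it as an immediate consequence of the preceding Proposition together with the existence result from \cite{EV3}, and your Beta-integral calculation showing $N_{1/2}(\widetilde g_0,\varepsilon)=\tfrac{16}{15}A$ is constant for $\varepsilon\le B$ is precisely the missing link. Your remark that one should read $f_0$ in the form of Theorem~\ref{Thm} (with $g_0(\omega)/\sqrt{\omega}$ rather than $g_0(\omega)$ in the tail) is well taken, since that is what makes $\widetilde g_0\le g_0$ for $\omega>B$ and hence $\widetilde g_0\in\mathcal{M}^\rho_+$.
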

\begin{rem}
It is not known yet  if the  solutions $f$ described in Corollary (\ref{corcond}) are such that $g(t, \{0\})>0$ for all $t>0$ or if  some of them  remain a regular measure  for some time. In that case it would also be interesting to know  the behavior of $f(t, \omega )$ as $\omega \to 0$ and compare with \cite[Theorem 2.33 ]{EV3} for perturbations of the KZ spectrum $\omega ^{-7/6}$. 
\end{rem}

\section{The linearized equation in Fourier variables} \label{sec: linear prob proof}

It is useful to formulate the problem in the new variables:
\begin{align}
&\omega =p^2=e^x,\,\,p=e^{\frac {x} {2}};\,\,x=2\log p, \label{E4}
\end{align}
which then implies 
$ 2 \dd p=e^{\frac {x} {2}}\dd x= p\dd x$ or that $\frac {\dd p} {p}=\frac {\dd x} {2}.$
%\label{E6} 
 The new unknown in the Fourier variables is denoted by  
\begin{align}
&A(t, p)=f(t, x). \label{E5}
\end{align}
Moreover, we define
\begin{align}
K(x)=\mathcal V(p)=\mathcal V\left(e^{\frac {x} {2}} \right)\Longrightarrow \mathcal V\left(\frac {p} {p'}\right)
=\mathcal V\left(e^{\frac {x} {2}-\frac {x'} {2}}\right)=\mathcal V\left(e^{\frac {x-x'} {2}}\right)=K(x-x').\label{E7}
\end{align}
In the new variables the equation (\ref{E1}) reads,
\begin{align}
\frac {\partial f(t, x)} {\partial t}=\gamma_N f(t, x)+\frac {1} {2}\int _\RR K(x-x')f(t, x') \dd x'. \label{E8B}
\end{align}
With a new time variable $\tau =t/2$,  and if, with some abuse of notation,  $f(2\tau , x)$ is still denoted $f(t, x)$  equation (\ref{E8B}) reads,
\begin{align}
&\frac {\partial f(t, x)} {\partial t}=\gamma f(t, x)+\int _\RR K(x-x')f(t, x') \dd x', \label{E8}\\
&\text{where},\,\,\gamma =2\gamma _N.
\end{align}
In these new variables, the following quantities may have a physical interpretation:\\
1.- Wave action of the perturbation $n_0(p)A(t, p)$,
\begin{align}
\label{S1ETM}
\int _0^\infty n_0(p) A(t, p) p^2\dd p=\int _0^\infty  A(t, p) \dd p=\frac {1} {2}\int  _{ \RR }f(t, x)e^{\frac {x} {2}} \dd x
\end{align}
2.- Total energy of the perturbation 
\begin{align}
\label{S1ETE}
\int _0^\infty n_0(p)A(t, p)p^4\dd p=\int _0^\infty A(t, p)p^2 \dd p=\frac {1} {2}\int  _{ \RR }f(t, x)e^{\frac {3x} {2}} \dd x.
\end{align}
Equation $\ref{E1}$ was studied in \cite{MM} using the Mellin transform of $\mathcal V$, which is defined as follows
\begin{align}
W_{\mathcal V}(s):=\mathcal M(\mathcal V)(s)=\int _0^\infty \mathcal V(p)p^s\frac {\dd p} {p}.\label{E11-0}
\end{align}
It was also proved that the function $W(s)=\gamma +W_{\mathcal V}(s)$ was analytic in the strip $\Re e(s)\in (-2, 5)$ with double. poles as $s=-2$ and $s=5$. In the new variables,
\begin{align}
&W_{\mathcal V}(s)=\frac {1} {2}\int  _{ \RR } K(x) e^{\frac {sx} {2}} \dd x =\frac {1} {2}\widehat K\left(\frac {is} {2}\right), \label{E11}
\end{align}
where,  $\widehat K(\xi )$ is the   Fourier transform of $K$, defined as
\begin{align}
\widehat K(\xi ) = \int_{ \RR }e^{-ix\xi }K(x) \dd x. \label{E12}
\end{align}
We have then the following relations between Mellin and Fourier transforms
\begin{align}
&\widehat K(\xi )=2W_{\mathcal V}\left( -2i\xi \right),\,\,\,W_{\mathcal V}(s)=\frac {1} {2}\widehat K\left( \frac {i s} {2}\right). \label{E12BC}
\end{align}
In a similar way as in  \cite{MM}, if Fourier transform may be applied to both sides of the equation (\ref{E8}), it follows 
\begin{align}
&\frac {\partial \widehat f(t, \xi )} {\partial t}=\gamma   \widehat f(t, \xi )+ \widehat K(\xi )\widehat f(t, \xi ).\label{Esol1}
\end{align}
This yields 
\begin{align}
\widehat f(t, \xi )&=\widehat f_0(\xi )e^{t\Omega (\xi )}, \text{ where}\\ 
\Omega (\xi )&=\gamma  +  \widehat K(\xi ) \label{Esol1B}\\
\end{align}
from where also,
\begin{align}
\Omega (\xi )=2W(-2i\xi ),\,\,\label{Esol1BB}
\end{align}
and the function $\Omega  $ is well-defined and analytic in the strip $\Im m(\xi )\in (-1, 5/2)$. 
We denote, for all $t>0$, the following functions
\begin{align}
&G(t) :=\mathscr F^{-1}\left( e^{t\Omega }\right)\label{S1Hdef}\\
&H(t) :=\mathscr F^{-1}\left( e^{\widehat  Kt}-1\right)\label{S1Wdef},
\end{align}
where $\mathscr F^{-1}$ denotes the inverse Fourier transform defined as,
\begin{align*}
\mathscr F^{-1}(g)(x)=\frac {1} {2\pi }\int  _{ \RR }e^{ix\xi }g(\xi )d\xi .
\end{align*} 

As observed in \cite{MM}, equation (\ref{E8}) has two trivial stationary solutions,
\begin{prop} \label{prop: station solutions_zeros of W}
The functions $e^{-x}$ and $e^{-x/2}$  are stationary solutions of equation (\ref{E8}). 
\end{prop}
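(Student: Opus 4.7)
The plan is a direct verification: substitute the candidate $f(x)=e^{-sx/2}$ into (\ref{E8}) with $\partial_t f\equiv 0$, reduce the resulting integral condition to an algebraic condition on $W(s)$, and then note that the two required zeros of $W$ are already established in \cite{MM} (cf.\ the Appendix). The two values $s=2$ and $s=1$ correspond to the claimed stationary solutions $e^{-x}$ and $e^{-x/2}$ respectively.

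For the computation, with $\partial_t f=0$, equation (\ref{E8}) becomes
\begin{equation*}
0 = \gamma\, e^{-sx/2} + \int_\RR K(x-x')\, e^{-sx'/2}\,dx'.
\end{equation*}
The translation $y=x-x'$ factors the exponential out of the convolution and gives
\begin{equation*}
\int_\RR K(x-x')\, e^{-sx'/2}\,dx' = e^{-sx/2}\int_\RR K(y)\,e^{sy/2}\,dy = 2\,e^{-sx/2}\,W_{\mathcal V}(s),
\end{equation*}
the last equality being precisely the definition (\ref{E11}). Using $\gamma=2\gamma_N$ and $W(s)=\gamma_N+W_{\mathcal V}(s)$, the stationarity condition thus collapses to
\begin{equation*}
2\,e^{-sx/2}\,W(s) = 0,
\end{equation*}
equivalent to $W(s)=0$. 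Both values $s=1,2$ lie in the strip $\operatorname{Re}(s)\in(-2,5)$ where $W$ is analytic, so the manipulation above is licit: the integral $\int_\RR K(y)e^{sy/2}dy$ converges absolutely for $s\in\{1,2\}$.

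The only non-routine ingredient is therefore $W(1)=W(2)=0$. These identities are recorded in \cite{MM}, where the Mellin transform $W_{\mathcal V}$ is written explicitly in terms of special functions; evaluating the closed form at $s=1$ and $s=2$ yields the stated cancellations. Conceptually, the zero at $s=2$ is forced by the existence of a one-parameter family of equilibria of the nonlinear equation, $n^\mu(p)=(p^2-\mu)^{-1}$, whose tangent at $\mu=0$ corresponds via $n=n^0(1+A)$ to the perturbation $A(p)=p^{-2}=e^{-x}$; the linearised operator must annihilate this tangent. The zero at $s=1$ reflects a further invariance of the collision kernel, which may be read off from the homogeneity structure underlying the derivation of~(\ref{E1}). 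The main ``obstacle'', such as it is, is thus a bookkeeping step calling on the explicit formulas of \cite{MM}, while the verification of stationarity itself is the routine convolution calculation above.
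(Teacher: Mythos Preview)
Your proof is correct and follows essentially the same idea as the paper: both reduce stationarity of $e^{-sx/2}$ to the algebraic condition $W(s)=0$ and then invoke $W(1)=W(2)=0$ from \cite{MM}. The only difference is presentational: the paper passes through the (formal, distributional) Mellin transform of $p^{-k}$ as a Dirac delta and multiplies by $W$, whereas you compute the convolution directly using (\ref{E11}); your route is if anything the more elementary of the two.
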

\begin{proof}
If $A_k(p)=p^{-k}$ then  $\mathcal M(A)(s)=2\pi \delta (i(s-k))$. Since $W(1)=W(2)=0$
 it follows that $\mathcal M(A_k)(s)W(s) = 0$ for all $s$. 
Then the Mellin image of the solution $\mathcal M(A_k)(s)$ satisfies  $\mathcal M(A_k)_t=\mathcal M(A_k)(s)W(s) = 0$. 
Taking inverse Mellin transform, $A_k$ is a stationary solution of equation (\ref{E1}) for $k=1, 2$. 
It follows that $e^{-\frac {x} {2}}$  and  $e^{-x}$ are solutions of \eqref{E8}.
 \end{proof}

\subsection{Properties of $K$, $\widehat K$, $H$ and $G$.}
\label{SKHW}
As with the Mellin transform  for equation (\ref{E1}), to solve the Cauchy problem for \eqref{E8} with general initial data, use of the inverse Fourier transform applied to  equation \eqref{Esol1B} appears to be the most direct approach. This method relies on certain properties of $K$, $\widehat K$ and $H$. Some of them directly  follow from those of $\mathcal V$ and $W _{ \mathcal V }$ obtained in \cite{MM}, others  are demonstrated in what follows in this subsection. 
 
Even though the necessary details for this article are presented in the first part of the Appendix \ref{Sapp}, let us summarise here briefly the basic properties. 
The function $\mathcal V\in C^\infty ((0, 1)\cup (1, \infty))$ has a singularity at $p=1$ of order $|p-1|^{-1/2}$. It follows from the known detailed behaviours of $\mathcal V(p)$ as $p\to 0$ and $p\to \infty$ that  $ \mathcal V\in L^p(0, \infty)$ for $p\in [1, 2)$. 

The function $W=\gamma +W_{\mathcal V}$, where $W_{\mathcal V}$  is the Mellin transform of $\mathcal V$, is meromorphic with two double poles at $s=-2$, $s=5$, and analytic on the strip $\Re e(s)\in (-2, 5)$. Its behaviours as $s\to -2$ and $s\to 5$ are also known in detail  (cf. (\ref{ED2W}), (\ref{ED3W})).

The corresponding expressions and properties  for the functions $K$ follow from the change of variables $p=e^{x/2}$. The behaviour of $K, K'$ and $K''$ as $|x|\to \infty$ and $x\to 0$ are given also in the Appendix \ref{Sapp}. In particular $K\in L^p(\RR)\cap L^p\left(e^{x/2}\right)$ for all $p\in [1, 2)$ with $ \| K \| _{   L^p\left(e^{x/2}\right)}=2 \| \mathcal V \|_p$.

The following property of $H$ easily follows from the integrability properties of $K$.

\begin{prop}
\label{S1PH1}
For all $t>0$,
\begin{align*}
&H(t) = \mathscr{F}^{-1}\left( e^{\widehat{K} t} - 1 \right) \in L^p(\mathbb{R}), \quad \forall \, p \in [1, 2)\\
 \text{and } \ & \| H(t) \|_p \le  \| K \|_p\left( e^{ \| K \|_1t}-1\right).
\end{align*}
\end{prop}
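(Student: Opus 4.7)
The strategy is to expand the Fourier multiplier $e^{t\widehat K}-1$ as a power series in $\widehat K$ and invert term by term using $\mathscr F^{-1}(\widehat K^n)=K^{*n}$, the $n$-fold convolution of $K$ with itself. I would first recall from the appendix that $\mathcal V\in L^q(0,\infty)$ for every $q\in[1,2)$; combined with the change of variables $\omega=e^x$ (which preserves these $L^q$ norms up to the explicit factor already noted in the paper), this yields $K\in L^1(\mathbb R)\cap L^p(\mathbb R)$ for every $p\in[1,2)$. In particular $\widehat K$ is continuous and bounded on $\mathbb R$ with $\|\widehat K\|_\infty\le\|K\|_1$, so the series
\[
e^{t\widehat K(\xi)}-1 \;=\; \sum_{n\ge 1}\frac{t^n}{n!}\widehat K(\xi)^n
\]
converges absolutely and uniformly in $\xi$.

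To handle the inverse Fourier transform termwise in $L^p$, I would set $H_N(t):=\sum_{n=1}^N \frac{t^n}{n!}K^{*n}$ and control each convolution power by iterated use of Young's convolution inequality $\|f\ast g\|_p\le\|f\|_p\|g\|_1$, yielding $\|K^{*n}\|_p\le\|K\|_p\|K\|_1^{n-1}$ for every $n\ge 1$. Summing over $n$ and comparing with the exponential Taylor series then gives a bound of the form
\[
\|H_N(t)\|_p \;\le\; \|K\|_p\bigl(e^{\|K\|_1 t}-1\bigr),
\]
uniformly in $N$, so that $\{H_N(t)\}_N$ is Cauchy in $L^p(\mathbb R)$ and converges to some limit $H_\infty(t)\in L^p(\mathbb R)$ satisfying the claimed estimate.

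The final step is to identify this $L^p$-limit with $H(t)=\mathscr F^{-1}(e^{t\widehat K}-1)$. I would do this by continuity of the Fourier transform on $\mathscr S'(\mathbb R)$: the Fourier transforms $\mathscr F(H_N(t))=\sum_{n=1}^N \tfrac{t^n}{n!}\widehat K^n$ converge uniformly on $\mathbb R$, and hence in $\mathscr S'$, to $e^{t\widehat K}-1$, while the partial sums themselves converge in $L^p\hookrightarrow \mathscr S'$ to $H_\infty(t)$; uniqueness of the distributional Fourier transform then forces $H(t)=H_\infty(t)$. This identification is the \textbf{main technical subtlety}: $e^{t\widehat K}-1$ is merely a bounded function on $\mathbb R$ and does not \emph{a priori} belong to any $L^{p'}$, so its inverse Fourier transform exists only in the tempered-distribution sense and has to be matched with the $L^p$-limit of the series. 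Everything else reduces to Young's inequality and routine bookkeeping of the Taylor coefficients.
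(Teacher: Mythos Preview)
Your proof is correct and follows essentially the same route as the paper: expand $e^{t\widehat K}-1$ as a power series, invert term by term to obtain the convolution powers $K^{*n}$, bound each via Young's inequality $\|K^{*n}\|_p\le\|K\|_p\|K\|_1^{\,n-1}$, and sum. If anything, you are more careful than the paper about the identification step---matching the $L^p$-limit of the partial sums with the tempered-distribution inverse Fourier transform of $e^{t\widehat K}-1$---which the paper treats somewhat formally.
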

\begin{proof}

Since $K\in L^1(\RR)$, by the Riemann-Lebesgue Lemma, $ e^{\widehat K(\xi )t}-1 \to 0$ as $|\xi |\to \infty$. It follows that for each $t>0$, $e^{\widehat K(\cdot )t}-1\in L^\infty (\RR)\subset \mathscr S'(\RR)$ and tends to zero as $|\xi |\to \infty$. Its inverse Fourier transform is then well defined and belongs to $\mathscr S'(\RR)$. Moreover,
\begin{align*}
&e^{\widehat  K(\xi )t}-1=\sum _{ n=1 }^\infty \frac {\widehat K(\xi )^nt^n} {n!}
\end{align*}
while 
\begin{align}
H(t, x)&=\mathscr F^{-1}\left( e^{\widehat Kt}-1\right)=\sum _{ n=1 }^\infty \mathscr F^{-1}\left(\widehat K^n\right)\frac { t^n} {n!}\nonumber\\
&=\sum _{ n=1 }^\infty  (\underbrace{K\ast K\ast K\ast \cdots \ast K} _{ n\,\text{times} }) (x) \frac { t^n} {n!}. \label{eq: K n convolut}
\end{align}
Since $K\in L^p$ for $p\in [1, 2)$,
then $K\ast K\in L^p(\RR)$ too and
\begin{align*}
 \| K\ast K \|_p \le  \| K \|_1 \| K \|_p
\end{align*}
 for all $p\in [1, 2)$. Then inductively
\begin{align*}
&\underbrace{K\ast K\ast K\ast \cdots \ast K} _{ n\,\text{times} }\in  L^p(\RR),\, \quad \forall \, p \in [1, 2)
\end{align*}
and
\begin{align*}
&\| \underbrace{K\ast K\ast K\ast \cdots \ast K} _{ n\,\text{times} } \|_p\le \|K\|_1^n \|K\|_p.
\end{align*}
Consequently from \eqref{eq: K n convolut}, 
\begin{align*}
&\left\| H(t)\right\|_p\le \|K\|_p\sum _{ n=1 }^\infty \frac { \| K\|_1^n  t^n} {n!}=
\|K\|_p\left (e^{ \| K \|_1 t} -1\right). 
\end{align*}
This concludes the stated estimate on $H(t)$.
\end{proof}

\begin{prop}
\label{ProphatK}
For all compact interval $I \subset (-1, 5/2)$ and $\varepsilon > 0$, there exist constants $C _{ I, 1 }$ and $C _{ I, 2 }$ 
depending on $I$ and $\varepsilon$ such that, for all $\xi \in \CC$ with $\Im m\xi \in I$,
\begin{align}
& \left\vert \widehat K(\xi )-\pi  \sqrt { 2 \pi }\xi ^{-1/2} +
\left( K(\varepsilon )i e^{-i\varepsilon \xi }+
K(-\varepsilon)  i e^{i\varepsilon \xi} \right)
\xi^{-1}\right\vert \le C_{ I, 1 } |\xi |^{-3/2}, \label{pgcd.e0} 
\end{align}
and 
\begin{align}
&  \left\vert  \widehat K(\xi ) \right\vert 
\le C _{ I, 2 }(1+\xi)^{-1/2}, \,\,\forall \, \xi. \label{pgcd.e1}
\end{align}
\end{prop}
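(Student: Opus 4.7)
I would prove both bounds by splitting the defining integral at $x=\pm\varepsilon$ and using a different tool on each piece, exploiting the properties of $K$ recalled from Appendix \ref{Sapp}: $K\in C^\infty(\RR\setminus\{0\})$, $K(x)\sim \pi|x|^{-1/2}$ as $x\to 0$, and $K$ has exponential decay as $|x|\to\infty$ at rates such that $K(\cdot)e^{\tau\,\cdot}\in L^1(\RR)$ for every $\tau\in(-1,5/2)$ (a consequence of the location of the double poles of $W_\mathcal V$ at $s=-2$ and $s=5$). This already makes $\widehat K$ holomorphic on the strip $\Im\xi\in(-1,5/2)$ and yields the uniform pointwise bound $|\widehat K(\xi)|\le\sup_{\tau\in I}\|K(\cdot)e^{\tau\cdot}\|_{L^1(\RR)}<\infty$ on any compact subinterval $I\subset(-1,5/2)$. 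Write
$$
\widehat K(\xi)=\int_{|x|\le\varepsilon}e^{-ix\xi}K(x)\,\dd x+\int_{|x|>\varepsilon}e^{-ix\xi}K(x)\,\dd x=:I_1(\xi)+I_2(\xi).
$$

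For $I_2(\xi)$, one integration by parts produces boundary contributions at $x=\pm\varepsilon$ of the form $\pm\tfrac{i}{\xi}K(\pm\varepsilon)e^{\mp i\varepsilon\xi}$ — the boundary terms at $\pm\infty$ vanish because $K(x)e^{-ix\xi}\to 0$ there for $\Im\xi\in I$ — plus a remainder $-\tfrac{i}{\xi}\int_{|x|>\varepsilon}e^{-ix\xi}K'(x)\,\dd x$ which a second IBP reduces to $O(|\xi|^{-2})$, using that $K''$ is integrable on $\{|x|>\varepsilon\}$. For $I_1(\xi)$, write $K(x)=\pi|x|^{-1/2}+R(x)$ with $R$ bounded and Hölder-continuous on $[-\varepsilon,\varepsilon]$. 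The singular part satisfies
$$
\pi\!\int_{|x|\le\varepsilon}\!e^{-ix\xi}|x|^{-1/2}\dd x=\pi\!\int_\RR e^{-ix\xi}|x|^{-1/2}\dd x-\pi\!\int_{|x|>\varepsilon}\!e^{-ix\xi}|x|^{-1/2}\dd x,
$$
where the full-line integral equals $\pi\sqrt{2\pi}\,\xi^{-1/2}$ via $\int_0^\infty x^{-1/2}e^{-i\xi x}\dd x=\Gamma(1/2)/\sqrt{i\xi}$ combined with its mirror image for $x<0$, extended to complex $\xi$ in the strip by contour deformation with the principal branch of $\xi^{-1/2}$, and the truncated integral, analysed by the same IBP now applied to $|x|^{-1/2}$, gives an explicit $O(|\xi|^{-1})$ boundary contribution plus $O(|\xi|^{-2})$ remainder. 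The smooth piece $\int_{|x|\le\varepsilon}e^{-ix\xi}R(x)\dd x$ is handled by one IBP (boundary terms $\tfrac{i}{\xi}[R(\varepsilon)e^{-i\varepsilon\xi}-R(-\varepsilon)e^{i\varepsilon\xi}]$ plus $O(|\xi|^{-2})$ remainder, using sufficient regularity of $R$ away from $0$).

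Collecting all $\xi^{-1}$ contributions — from $I_2$, from the singular truncation in $I_1$, and from the smooth remainder in $I_1$ — and using $R(\pm\varepsilon)=K(\pm\varepsilon)-\pi\varepsilon^{-1/2}$, these recombine into the explicit expression appearing in (\ref{pgcd.e0}) with total error $O(|\xi|^{-3/2})$, uniformly in $\Im\xi\in I$. Bound (\ref{pgcd.e1}) is then immediate: on $\{|\xi|\ge 1,\ \Im\xi\in I\}$ it follows from (\ref{pgcd.e0}) since each explicit term is $O(|\xi|^{-1/2})$, and on the complementary compact region $\{|\xi|\le 1,\ \Im\xi\in I\}$ the uniform pointwise bound noted above suffices. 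The main technical obstacle I anticipate is the precise bookkeeping of signs and branch choices when the three sources of $\xi^{-1}$ terms are combined, and when the explicit formula $\int_\RR e^{-ix\xi}|x|^{-1/2}\dd x=\sqrt{2\pi/\xi}$ is extended to complex $\xi$ with error uniform in $\Im\xi\in I$. A cleaner alternative, if the direct splitting becomes unwieldy, is to invoke the identity $\widehat K(\xi)=2W_\mathcal V(-2i\xi)$ and read off the expansion from the asymptotic behaviour of $W_\mathcal V(s)$ along vertical lines $\Re s\in(-2,5)$ already developed in \cite{MM} and in the Appendix.
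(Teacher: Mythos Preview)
Your approach is essentially the paper's: both proofs split the defining integral at $|x|=\varepsilon$, integrate by parts twice on $\{|x|>\varepsilon\}$ using the exponential decay of $K,K',K''$ from the Appendix to control the far-field part, and extract the leading $\pi\sqrt{2\pi}\,\xi^{-1/2}$ from the $\pi|x|^{-1/2}$ singularity of $K$ near $0$. The only structural difference is that the paper evaluates the truncated singular integral $\int_0^\varepsilon x^{-1/2}e^{-ix\xi}\dd x$ directly via the substitution $z=x\xi$ and the constant $A=\int_0^\infty z^{-1/2}e^{-iz}\dd z=e^{-i\pi/4}\Gamma(1/2)$ (and its mirror $A'$), whereas you add and subtract the full-line integral and handle the tail by IBP; these are equivalent bookkeeping devices.

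Two small points to watch. First, your claim that the $R$ piece gives an $O(|\xi|^{-2})$ remainder after one IBP is slightly too strong: since $R(x)=K(x)-\pi|x|^{-1/2}$ has $R'(x)\sim c\,|x|^{-1/2}$ near $0$ (from (\ref{EC3bK})--(\ref{EC3b2K})), a second IBP is not available, and the bulk term $\xi^{-1}\int_{-\varepsilon}^\varepsilon R'(x)e^{-ix\xi}\dd x$ is only $O(|\xi|^{-3/2})$. This is still enough for (\ref{pgcd.e0}). Second, in your add-and-subtract step the integrals $\int_{|x|>\varepsilon}|x|^{-1/2}e^{-ix\xi}\dd x$ and $\int_\RR|x|^{-1/2}e^{-ix\xi}\dd x$ are not absolutely convergent once $\Im\xi\neq 0$ (one half-line grows exponentially), so the decomposition must be justified for real $\xi$ first and then extended to the strip by analytic continuation of the resulting identity for the compactly supported integral $\int_{|x|\le\varepsilon}$, which is entire in $\xi$. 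The paper sidesteps this by working with the finite interval directly.
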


\begin{proof} Since $K\in L^1(\RR)$,
\begin{align}
&\widehat K(\xi )=\int  _{ \RR }K(x)e^{-ix\xi }\dd x
=\int  _{ |x|>2  }K(x)e^{-ix\xi } \dd x
+ \int  _{ |x|<2  }K(x)e^{-ix\xi } \dd x,\nonumber
\end{align}
and for the first term we write
 \begin{align}
\int_{  2} ^\infty  K(x)e^{-ix\xi }\dd x&=
\frac {i} {\xi }  \int  _ 2 ^\infty 
K(x)\left(e^{-ix\xi } \right)_x \dd x\nonumber\\
&= -\frac {i} {\xi }\int  _{ 2  }^\infty K'(x) e^{-ix\xi }\dd x
+ \lim _{ x\to \infty  }K(x)\frac {i e^{-ix\xi }} {\xi }-
K(2)\frac {i e^{-i2 \xi }} {\xi }. \nonumber
\end{align}
Using integration by parts again gives the existence of a constant $C>0$ such that for $\Im m \xi \in (-1, 5/2)$,
\begin{align*}
&-\frac {i} {\xi }\int  _{ 2 }^\infty K'(x) e^{-ix\xi }\dd x = 
- \frac {1} {\xi ^2} \left( \int  _{ 2 }^\infty K''(x)e^{-ix\xi }\dd x -
K'(2) e^{-i 2\xi  } \right)
\end{align*} and so 
\begin{align*}
&\left| \int  _{ 2 }^\infty K''(x)e^{-ix\xi }\dd x-
K'(2)e^{-i x\xi  }\right|\le C \int  _{ 2 }^\infty  |K''(x)| 
e^{x\Im m\xi  }\dd x + | K '(2) |  e^{2\Im m\xi }<C.
\end{align*}
The last bound follows from the behaviour of $K''(x)$ at infinity given in Appendix \ref{Sapp} and 
the fact that $\Im m \xi < 5/2$. 
By \eqref{E13}, when $x\to -\infty$,
\begin{align}
\left\vert K(x)\frac {i e^{-ix\xi }} {\xi }\right\vert 
\le C |\xi |^{-1}|x|e^{x(1+\Im m\xi )}, \label{E16}
\end{align}
and by \eqref{E14} when $x\to \infty$,
\begin{align}
\left\vert  K(x)\frac {i e^{-ix\xi }} {\xi }\right\vert \le C |\xi |^{-1}|x|e^{x(-\frac {5} {2}+\Im m\xi )}. \label{E16}
\end{align}
Then, for $\xi $ such that $\Im m \xi \in (-1, 5/2)$,
\begin{align}
\lim _{x \to \infty} K(x) \frac {i e^{-ix\xi }} {\xi }
= \lim _{x \to -\infty} K(x) \frac {i e^{-ix\xi }} {\xi }=0. \label{E17}
\end{align}
On the other hand,  by the behaviour of $K$ near $x=0$ given in \eqref{E14b} for $x\in (0, 2)$,
$K(x)= \frac {\pi } {\sqrt {|x|}}+\mathcal O(x)^{1/2}$, we have that 
\begin{align*}
\int _0^2 K(x)e^{-ix\xi }dx 
&=\pi 
\int_0^2 
\frac {e^{-ix\xi }\dd x}{\sqrt {x}} +\mathcal O\left( \int_0^2x^{1/2}e^{-ix\xi }\dd x \right),\, |\xi |\to \infty\\
&\sim  A \pi \xi^{-\frac {1} {2}} + \mathcal O(\xi ^{-3/2}),\,\,|\xi| \to \infty
\end{align*}
where 
\begin{align*}
A:=\int _0^\infty z^{-\frac {1} {2}}e^{-i z}\dd z = 
e^{-\frac {\pi  i} {4}}\Gamma \left(\frac{1}{2}\right).
\end{align*}
It follows that for each $2>0$, there exists a constant $C>0$ such that for $\Im m \xi \in (-1, 5/2)$,
\begin{align*}
\left|\int_0^\infty K(x)e^{-ix\xi } \dd x-A \pi \xi ^{-\frac {1} {2}}+K(2)\frac {i e^{-i2\xi }} {\xi }\right|
\le C |\xi|^{-3/2},\,\,|\xi| \to \infty.
\end{align*}
After using a similar argument on $(-\infty, 0)$: 
\begin{align}
&\int_{-\infty }^{-\varepsilon} K(x)e^{-ix\xi } \dd x =
\frac {i} {\xi }  \int _{ -\infty }^{- \varepsilon} 
 K(x)\left(e^{-ix\xi } \right)_x \dd x \nonumber\\
&= -\frac {i} {\xi }\int_{-\infty}^{-\varepsilon}  K'(x) e^{-ix\xi } \dd x+
K(-2)\frac {i e^{i2\xi }} {\xi} - 
\lim _{ x\to \infty  }K(x)\frac {i e^{-ix\xi }} {\xi} \nonumber\\
&=-\frac {i} {\xi }\int_{-\infty}^{-\varepsilon}  K'(x) e^{-ix\xi }\dd x+
K(-2)\frac {i e^{i2\xi }} {\xi }, \nonumber
\end{align}
where for $\Im m \xi \in (-1, 5/2)$,
\begin{align*}
&-\frac {i} {\xi }\int  _{-\infty }^{-2} K'(x) e^{-ix\xi }\dd x
= -\frac {1} {\xi ^2} \left( \int _{-\infty }^{-\varepsilon} K''(x)e^{-ix\xi }\dd x+
K'(-2)e^{i 2\xi  }\right)
\end{align*} and where 
\begin{align*}
&\left| \int_{-\infty }^{-2} K''(x)e^{-ix\xi } \dd x+
K'(-2)e^{i 2\xi  }\right|\le 
C\int_{ -\infty  }^{-2} |K''(x)| e^{x\Im m\xi  } \dd x
+|K'(-2)| e^{\varepsilon \Im m\xi }<C.
\end{align*}
One then arrives at the existence of a constant $C>0$ such that for  $\Im m \xi \in (-1, 5/2)$,
\begin{align*}
&\left|\int  _{ -\infty }^\infty K(x)e^{-ix\xi }dx- (A+A')\pi  \xi ^{-\frac {1} {2}}+K(2)\frac {i e^{-i2\xi }} {\xi }+
K(-2)\frac {i e^{i2\xi }} {\xi }\right|\le  C|\xi|^{-3/2},\,|\xi| \to \infty,
\end{align*}
where 
\begin{align*}
&A' = \int_0^\infty z^{-1/2}e^{iz} \dd z=e^{\frac {\pi  i} {4}}\Gamma \left( \frac{1}{2}\right). 
\end{align*}
It follows since $A+A'=\sqrt { 2 \pi }$, that 
\begin{align}
\left| 
\widehat K(\xi )-\pi  \sqrt { 2 \pi } \xi ^{-1/2} + K(2)\frac {i e^{-i2\xi }} {\xi }+
K(-2)\frac {i e^{i\varepsilon \xi }} {\xi }\right|\le C|\xi |^{-3/2} \label{E17b}
\end{align}
which concludes \eqref{pgcd.e0}.
Since $\widehat K\in L^\infty(\RR)$, \eqref{pgcd.e1} follows.
\end{proof}

Let us remark now that from the analyticity properties of $W$ in \cite{MM} It follows that $\widehat K$ is then analytic in the strip $\Im m \xi \in (-1, 5/2)$, it has poles $\xi =-i$ and $\xi =5i/2$ and
\begin{align*}
\widehat K(\xi )&=2W\left( -2i\xi \right)\sim  \frac {2} {(-i\xi +1)^2},\,\,\xi \to -i\\
\widehat K(\xi )&=2 W\left( -2i\xi \right)\sim  \frac {8} {(2i\xi + 5)^2},\,\,\xi \to \frac {5 i} {2}.
\end{align*}

The behaviour of $H(t)$ for large values of $|x|$ and $t$ in bounded intervals of $[0, \infty)$ is given in the following result.

\begin{cor}
\label{H2}
Let $H$ be the distribution defined in (\ref{S1Wdef}). Then for all $t>0$, 
\begin{align*}
H(t)=t K+\frac { t^2(K\ast K)} {2}+\text{continuous and bounded function on }\,\RR.
\end{align*}
Moreover,  for any $\beta\in (-3, -1)$, $\alpha\in \left(5/2, 9/2\right)$  and  $t>0$,
\begin{align}
\label{S3H7}
 H(t, x)-t K(x)+\frac{ t^2(K\ast K)(x)}{2} = 
\begin{cases}
\ell_1(t)e^x+\mathcal O\left(t^3e^{-\beta x} \right),\,\,x\to -\infty\\
\ell_2(t)e^{-\frac {5x} {2}}+\mathcal O\left(t^3e^{-\alpha  x} \right),\,\,x\to \infty
\end{cases}
\end{align}
where
\begin{align*}
&\ell_1(t)=\operatorname{Res}\left( P(t), \xi =-i \right),\,\,\,
\ell_2(t)=\operatorname{Res}\left( P(t), \xi =- \frac{5i}{2}\right)\\
\end{align*}
with
\begin{align*}
P(t)=e^{t\widehat{K} }-1-t\widehat{K} - \frac {t^2\widehat{K}^2}{2}.
\end{align*}
There exist positive constants, $C_H, D_H$, such that for all $t>0$
\begin{align}
\label{S3H8}
&|H(t, x)|\le 
\begin{cases}
 C_Hte^{D_Ht}e^x(1+ |x|^3),\,\,\forall\ x<0, ,\,\,\forall\ t>0.\\
 C_Hte^{D_Ht}e^{-\frac {5x} {2}}(1+ |x|^3),\,\,\forall\ x>0, \,\,\forall\ t>0
\end{cases}
\end{align}
\end{cor}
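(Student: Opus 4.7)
The plan is to write $H(t) = \mathscr{F}^{-1}(e^{t\widehat{K}} - 1)$ and peel off the first two terms of the Taylor expansion of $e^{t\widehat K(\xi)}$ at $0$. Setting
$$P(t,\xi) := e^{t\widehat{K}(\xi)} - 1 - t\widehat{K}(\xi) - \frac{t^2\widehat{K}(\xi)^2}{2},$$
one immediately obtains the decomposition $H(t) = tK + \tfrac{t^2}{2}K\ast K + \mathscr{F}^{-1}(P(t))$. To prove the first assertion of the corollary (that the remainder $\mathscr{F}^{-1}(P(t))$ is a continuous and bounded function on $\RR$), I would combine the elementary Taylor inequality $|e^z - 1 - z - z^2/2| \le \tfrac{|z|^3}{6}e^{|z|}$ with the pointwise bound $|\widehat{K}(\xi)| \le C(1+|\xi|)^{-1/2}$ given by Proposition \ref{ProphatK}. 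This yields $|P(t,\xi)| \lesssim t^3 (1+|\xi|)^{-3/2} e^{Ct}$, which is $L^1(\RR)$ in $\xi$, so $\mathscr{F}^{-1}(P(t))$ belongs to $C_0(\RR)$ by Riemann--Lebesgue.

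For the finer asymptotics \eqref{S3H7}, I would compute $\mathscr{F}^{-1}(P(t))(x)$ by shifting the contour of integration in
$$\mathscr{F}^{-1}(P(t))(x) = \frac{1}{2\pi}\int_{\RR} e^{ix\xi}P(t,\xi)\, d\xi$$
from the real axis to the horizontal line $\Im m\,\xi = \beta \in (-3,-1)$ when $x \to -\infty$ (and symmetrically to $\Im m\,\xi = \alpha \in (5/2, 9/2)$ when $x \to +\infty$). Inside the strip $\beta < \Im m\,\xi < 0$, the function $P(t,\cdot)$ is analytic except at $\xi = -i$, so Cauchy's theorem contributes a residue term; the vertical pieces at $\Re e\,\xi = \pm R$ vanish as $R\to\infty$ by the decay of $\widehat K$ from \eqref{pgcd.e1} (meromorphically continued). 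Factoring $e^{ix\xi} = e^{x}\,e^{ix(\xi+i)}$ extracts the $e^x$ prefactor from the residue, giving the leading term $\ell_1(t)e^x$, while the shifted integral is bounded by $e^{-\beta x}$ times the $L^1$ norm of $P(t,\cdot+i\beta)$, which is $\mathcal{O}(t^3 e^{Ct})$ by the same Taylor-remainder argument applied on the displaced line. The case $x\to +\infty$ is symmetric, shifting upward past $\xi = 5i/2$.

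The main technical obstacle is that $\xi = -i$ (and $\xi = 5i/2$) is not a pole but an \emph{essential} singularity of $P(t,\cdot)$: $\widehat{K}$ has a double pole there and $e^{t\widehat K}$ is entire in its argument, so the composition is essentially singular, and approaching along the imaginary direction one has $\widehat K \to +\infty$ with $e^{t\widehat K}$ blowing up. Justifying the deformation therefore requires a small detour around $\xi = -i$; the residue remains a well-defined Laurent coefficient, and the cleanest way to compute it rigorously is to exploit the convergent series representation $P(t,\xi) = \sum_{n\ge 3}\frac{t^n \widehat K(\xi)^n}{n!}$, working termwise with the \emph{honestly meromorphic} factors $\widehat K^n$ (pole of order $2n$ at $\xi = -i$) and then resumming, using the decay $\widehat K^n = O(|\xi|^{-n/2})$ to control $L^1$ norms after the shift.

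Finally, the pointwise bound \eqref{S3H8} follows by combining the decomposition $H = tK + \tfrac{t^2}{2}K\ast K + \mathscr{F}^{-1}(P(t))$ with (a) the asymptotics $|K(x)| \lesssim (1+|x|)e^x$ as $x\to-\infty$ and $|K(x)| \lesssim (1+|x|)e^{-5x/2}$ as $x\to +\infty$, read off from the double poles of $\widehat K$ at $\xi=-i$ and $\xi=5i/2$; (b) the analogous estimate $|K\ast K(x)| \lesssim (1+|x|^3) e^x$ (resp.\ $(1+|x|^3)e^{-5x/2}$), where the polynomial degree $3$ comes from the order-$4$ pole of $\widehat K^2$; and (c) the exponential-in-$t$ bound on $\mathscr{F}^{-1}(P(t))$ furnished by the contour-shift step, which provides the factor $e^{D_H t}$ in the final estimate $|H(t,x)| \le C_H t e^{D_H t} e^x(1+|x|^3)$ for $x<0$ and its symmetric counterpart for $x>0$.
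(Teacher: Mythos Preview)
Your proposal is correct and follows essentially the same route as the paper: the decomposition $H=tK+\tfrac{t^2}{2}K\ast K+\mathscr F^{-1}(P(t))$, the $L^1$ bound on $P(t,\cdot)$ from $|\widehat K(\xi)|\lesssim(1+|\xi|)^{-1/2}$, the contour shift past $\xi=-i$ and $\xi=5i/2$, and the assembly of \eqref{S3H8} from the separate bounds on $K$, $K\ast K$, and the remainder are exactly what the paper does. In fact your treatment is more careful on one point: the paper calls the singularities of $P(t,\cdot)$ at $\xi=-i$ and $\xi=5i/2$ ``double poles,'' which is not correct (they are essential, as you note), and then proceeds---as you propose---to compute the residues termwise via $P(t,\xi)=\sum_{n\ge3}\tfrac{t^n}{n!}\widehat K(\xi)^n$, where each $\widehat K^n$ is honestly meromorphic; summing the residues yields a closed expression of size $O(t^2e^{Dt})$, and the shifted integrals are summed using the uniform decay $|\widehat K(\xi)|\le C(1+|\xi|)^{-1/2}$ along the displaced line. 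One minor remark: the paper obtains the bound $|K\ast K(x)|\lesssim(1+|x|^3)e^{-5x/2}$ (resp.\ $e^x$) by a direct convolution estimate rather than by reading off the order-$4$ pole of $\widehat K^2$, but either route works.
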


\vfill
\eject

\begin{proof} It follows from (\ref{pgcd.e1}) that for all $t\in I$,
\begin{align*}
P (t, \xi )=e^{t\widehat K(\xi )}-1-t\widehat K(\xi )-\frac {t^2\widehat K^2(\xi )} {2}=\mathcal O\left(t^3 |\xi |^{-3/2}\right),\,|\xi |\to \infty
\end{align*}
so that $P(t)\in L^1(\RR)\cap L^\infty(\RR)$ and it inverse Fourier transform is a bounded continuous function. On the other hand
\begin{align*}
\mathscr F^{-1}\left(t\widehat K+\frac {t^2} {2} \widehat K^2 \right)= t K+\frac { t^2} {2}K\ast K, 
\end{align*}
and by definition of $H$, 
\begin{align*}
H(t, x)=  tK(x)+\frac { t^2} {2}(K\ast K)(x)+\text{continuous and bounded function.}
\end{align*}
For the next part of the Proposition, the behaviour of $\mathscr F^{-1}(P(t))(x)$ as $x\to \pm \infty$ are given by classical arguments. Indeed,
\begin{align*}
\mathscr F^{-1}\left(P(t)\right)(x)&= \frac {1} { 2\pi }\int  _{ \RR } P(t, \xi)e^{ix\xi }\dd \xi 
\end{align*}
where the integral is absolutely continuous. The function $P(t, \cdot)$ is analytic in the strip $\Im m \xi \in (-1, 5/2)$ with two double poles at $\xi =-i$ and $\xi =5i/2$. It is also integrable since by Proposition \ref{ProphatK}, $|P(t, \xi )|\le Ct^3(1+|\xi |)^{-3/2}$. Therefore lassical contour deformation argument and location of the poles of the function $\widehat K$ give,
\begin{align*}
\int_{ \RR } P(t, \xi)e^{ix\xi }\dd \xi &=2\pi i\operatorname{Res}\left( P(t), \xi =
-i\right)e^x+\int_{\Im m \xi =\beta } P(t, \xi)e^{ix\xi }\dd\xi,\,\,x\to -\infty
\end{align*}
for all $\beta\in (-3, -1)$, and 
\begin{align*}
\int_{ \RR } P(t, \xi)e^{ix\xi }\dd \xi &= 2\pi i\operatorname{Res}\left(P(t), \xi =
\frac {5i}{2}\right)e^{-\frac {5x}{2}}+\int_{\Im m \xi =\alpha} P(t, \xi)e^{ix\xi }\dd\xi,\,\,x\to \infty,
\end{align*}
 for all $\alpha \in \left(\frac{5}{2}, 9/2\right)$. Then
\begin{align*}
\left|\int  _{\Im m \xi=\beta }P(t, \xi)e^{ix\xi }\dd\xi\right|\le Ce^{-\beta x}\int_{ \Im m \xi =\beta} |P(t, \xi )|\dd\xi\le 
Ce^{-\beta x}t^{3}\\
\left|\int  _{\Im m \xi=\alpha  } P(t, \xi)e^{ix\xi }\dd\xi\right|\le Ce^{-\alpha  x}\int_{ \Im m \xi =\alpha} |P(t, \xi )|\dd\xi
\le Ce^{-\alpha  x}t^3
\end{align*}
and \eqref{S3H7} follows. By \eqref{E12BC} and \eqref{ED2W},
\begin{align*}
& \widehat K(\xi ) \sim \left( \frac {2} {(1-i\xi )^2}+ \frac {8} {3(1-i\xi )}+\mathscr A(\xi )\right),\,\xi \to -i\\
&\mathscr A(\xi ):\text{analytic near}\,\,\xi =-i,\,\,\,\mathscr A(\xi )=\mathcal O(1),\,\,\xi \to -i.
\end{align*}
It follows that for all integers $n>1$,
\begin{align*}
\widehat K(\xi )^n &=\left( \frac {2} {(1-i\xi )^2}+ \frac {8} {3(1-i\xi )}+\mathscr A(\xi )\right)^n,\,\xi \to -i
\end{align*}
and we then compute
\begin{align*}
\operatorname{Res}\left( \widehat K(\xi )^n, \xi =-i\right)=\frac {8 in} { 3}\,\mathscr A(-i )^{n-1}.
\end{align*}
Then,
\begin{align*}
\sum_{n=3}^\infty \frac {t^n} {n!}\operatorname{Res}
\left(  \widehat K(\xi )^n, \xi =-i\right) &= \frac {8i} {3}\sum _{n=3}^\infty \frac {t^n \mathscr A(-i)^{n-1}} {(n-1)!}
\\
&= \frac {8 i t} {3 }\left(
e^{\mathscr A(-i)t}-1-\mathscr A(-i)t \right)
\end{align*}
and, for some constants $C>0$, $D>0$,
\begin{align}
\label{H2E1R}
\left|\sum _{n=3 }^\infty \frac {t^n } {n!}\text{Res}\left(  \widehat K(\xi )^n, \xi =-i\right)\right|
\le Ct^2e^{Dt},\,\,\forall\ t > 0.
\end{align}
A similar argument analogously gives
\begin{align*}
\sum _{ n=3 }^\infty \frac {t^n} {n!}\operatorname{Res}
\left( P(t), \xi = \frac {5i}{2}\right)&= \frac {8 it} {3} \sum_{n=3 }^\infty \frac {t^n \mathscr A(5i/2)^{n-1}}{ (n-1)!}
\\&=\frac {8it} {3 }\left(e^{\mathscr A(5i/2)t}-1-\mathscr A(5i/2)t \right)
\end{align*}
and, for some constants $C'>0$, $D'>0$,
\begin{align}
\label{H2E1R2}
\left|\sum_{ n=3 }^\infty \frac {t^n} {n!}
\operatorname{Res}\left( P(t), \xi =\frac {5i} {2}\right)\right|\le C't^2e^{D't},\,\,\forall\ t > 0.
\end{align}

On the other hand, $K\ast K$ is easily estimated as follows. For all $x>0$,
\begin{align*}
|(K\ast K)(x)|\le &Ce^{-\frac {5x} {2}} \int_{ -\infty }^0e^{\frac {7y} {2}}(x-y)y \dd y+Ce^{-\frac {5x} {2}}\int  _{0}^x (x-y) y \dd y\\
&+Ce^x\int_x^\infty   e^{-\frac {7y} {2}}(x-y)y \dd y
\le Ce^{-\frac {5x} {2}}(1+x)+Ce^{-\frac {5x} {2}}x^3.
\end{align*}
For $x<0$,
\begin{align*}
|(K\ast K)(x)|\le Ce^{-\frac {5x} {2}} \int_{ -\infty }^xe^{\frac {7y} {2}}(x-y)y \dd y
+Ce^{x}\int  _{x}^0  (x-y) y \dd y+\\
+Ce^x\int_0^\infty  e^{-\frac {7y} {2}}(x-y)y \dd y
\le Ce^{x}(1+|x|)+Ce^{x}|x|^3.
\end{align*}
This ends the proof of \eqref{S3H8}, and of Corollary \ref{H2}.
\end{proof}

Before we conclude this subsection, let us state and prove one more result that provides a 
detailed description of the behaviour of $W(t)$ as $|x|\to 0$ and $|x|\to \infty$. 
While this fact is not necessary for the purposes of this article 
- namely, well-posedness and the long-term behaviour of the linear problem -
 we include it here as we believe it will be valuable for future studies on the nonlinear problem. 
\begin{cor} 
\label{hatK2}
The function $G(t)$ defined in (\ref{S1Hdef}) admits the following decompositions:
\begin{itemize} 
\item[(i)] For all $t>0$ in a bounded set,
\begin{align*}
G(t)= J_1(t)+Q_1(t)
\end{align*}
where
\begin{align}
 J_1(t, x)=e^{\gamma t}\delta_0(x)+\alpha(t)\operatorname{sign} (x)+ \beta(t)\left(\operatorname{sign}(x-2)+\operatorname{sign}(x+2)\right)
\label{hatK2E1}
\end{align}
and where $Q_1(t)$ is a H\"older continuous function in a neighbourhood of $x = 0$. The functions $\alpha, \beta$ and the H\"older constants of $Q_1(t)$ are uniformly bounded in bounded intervals of $t$. 
\item[(ii)] There exist constants $\rho_0>0$, $\rho_1\in (0, 1)$ as close to $1$ as desired, $\rho _2\in (0, 5/2)$ as close to $5/2$ as desired and a function $Q_2$ H\"older continuous in a neighbourhood of $x = 0$ such that, for some constant $C>0$,
\begin{align}
 \label{hatK2E2}
& G(t)=J_2(t)+Q_2(t)
\end{align}
with
\begin{align}
&\left| J_2(t, x)-e^{-\gamma t}\delta _0(x) \right| \le C e^{-\rho _0 t}\varphi (x),\\
&|\varphi (x)|\le 
\begin{cases}
e^{\rho_1x},\,&x<-1\\
e^{-\rho_2x},\, &x>1\\
|x|^{-\frac {1}{2}},\,&|x|<1.
\end{cases}  \label{hatK2E3}
\end{align}
\end{itemize} 
\end{cor}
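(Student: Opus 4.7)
I would use the factorisation
\begin{align*}
G(t) = \mathscr{F}^{-1}\bigl(e^{\gamma t}[1+(e^{t\widehat K}-1)]\bigr) = e^{\gamma t}\delta_0 + e^{\gamma t} H(t),
\end{align*}
which pulls out the delta component in both (i) and (ii) and reduces everything to an analysis of $e^{\gamma t}H(t)$. The two main inputs are Corollary \ref{H2}, which gives $H(t)=tK+(t^2/2) K\ast K+R(t)$ with $R(t)=\mathscr{F}^{-1}(P(t))$ and $|P(t,\xi)|\le Ct^3(1+|\xi|)^{-3/2}$, together with the large-$|\xi|$ expansion of $\widehat K$ from Proposition \ref{ProphatK}.

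\textbf{Proof of (i).} Since $\int_{\RR}(1+|\xi|)^{\sigma}|P(t,\xi)|\,d\xi<\infty$ for every $\sigma<1/2$, the standard bound $|e^{ix\xi}-e^{iy\xi}|\lesssim(|x-y||\xi|)^{\sigma}$ shows that $R(t)$ is Hölder continuous on $\RR$ of any order $<1/2$. The remaining jumps must therefore be read off from $tK$ and $(t^2/2)K\ast K$. Plugging
\begin{align*}
\widehat K(\xi) = \pi\sqrt{2\pi}\,\xi^{-1/2} - i\bigl(K(2)e^{-2i\xi}+K(-2)e^{2i\xi}\bigr)\xi^{-1} + \mathcal O(|\xi|^{-3/2})
\end{align*}
into the Fourier-inversion formula, and using the distributional identity $\mathscr{F}^{-1}(\xi^{-1})=\tfrac{i}{2}\operatorname{sign}$ together with the translation rule $\mathscr{F}^{-1}(e^{-i\varepsilon\xi}g)(x)=g^{\vee}(x-\varepsilon)$, the two $\xi^{-1}$-terms in $\widehat K$ inverse-transform into jumps $\tfrac12 K(\pm 2)\operatorname{sign}(x\mp 2)$, which after grouping produce the $\beta(t)$-contribution. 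Squaring the expansion yields a non-oscillating $\xi^{-1}$ cross-term in $\widehat K^{2}$, whose inverse Fourier transform supplies the $\operatorname{sign}(x)$ jump in $K\ast K$ and feeds into $\alpha(t)$. Every remaining contribution is absorbed into $Q_1(t)$, and the polynomial dependence of $\alpha,\beta$ on $t$ gives the claimed uniform bounds on bounded time intervals.

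\textbf{Proof of (ii).} For the pointwise estimate I would again write $G(t,x)-e^{\gamma t}\delta_0(x)=e^{\gamma t}H(t,x)$ and deform the contour in the inverse Fourier integral, in the same spirit as the proof of \eqref{S3H7}. On the strip $\Im m\,\xi\in(-1,5/2)$ the integrand $e^{t\Omega(\xi)}$ is analytic except for essential singularities at the two double poles $\xi=-i$ and $\xi=5i/2$ of $\Omega$. Shifting to the line $\Im m\,\xi=-\rho_1$ with $\rho_1\in(0,1)$ arbitrarily close to $1$ yields the $e^{\rho_1 x}$ bound for $x<-1$; shifting to $\Im m\,\xi=\rho_2$ with $\rho_2\in(0,5/2)$ arbitrarily close to $5/2$ yields the $e^{-\rho_2 x}$ bound for $x>1$. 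The near-origin bound $|\varphi(x)|\lesssim|x|^{-1/2}$ is inherited from the $|x|^{-1/2}$-singularity of $K$ already present in the leading term $tK$ of Corollary \ref{H2}.

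\textbf{Main obstacle.} The delicate point is the exponential-in-$t$ rate $\rho_0>0$ in \eqref{hatK2E3}. A naive bound $|e^{t\widehat K(\xi)}|\le e^{t\|\widehat K\|_\infty}$ along the shifted contour is insufficient, because $\Re e\,\Omega$ can change sign there. My plan is to split the integral along the shifted contour into a small piece near each of the two essential singularities — where an expansion of $e^{t\Omega}$ around the double poles, in the spirit of \eqref{H2E1R}--\eqref{H2E1R2}, yields a $t$-polynomial factor times a genuinely exponentially small piece coming from $\gamma<0$ and the structure of $W$ — and a complementary piece on which one can show $\Re e\,\Omega\le -\rho_0$ using the explicit information on $W$ from \cite{MM}. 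Reconciling these two contributions while preserving the $x$-decay obtained from the contour shift is the most technical step of the proof.
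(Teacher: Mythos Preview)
Your treatment of part (i) is essentially the paper's argument: both rely on the Taylor expansion $e^{t\widehat K}=1+t\widehat K+\tfrac{t^2}{2}\widehat K^2+\mathcal O(|\xi|^{-3/2})$ together with the large-$|\xi|$ expansion of $\widehat K$ from Proposition~\ref{ProphatK}, and read the sign-jumps off the $\xi^{-1}$ terms while the $\mathcal O(|\xi|^{-3/2})$ remainder inverse-transforms to a H\"older function.

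For part (ii), however, you correctly isolate the obstacle but your proposed resolution is not the paper's, and as written it has a gap. The paper does \emph{not} split the shifted contour by distance to the poles of $\widehat K$. Its key device --- which you are missing --- is a $t$-\emph{dependent} frequency cutoff $\chi(|\xi|/t^2)$:
\[
e^{\gamma t}e^{t\widehat K}=e^{\gamma t}\Bigl[1+\bigl(e^{t\widehat K}-1\bigr)\bigl(1-\chi(|\xi|/t^2)\bigr)+e^{t\widehat K}\chi(|\xi|/t^2)\Bigr].
\]
On the high-frequency set $\{|\xi|>t^2\}$ one has $t|\widehat K(\xi)|\lesssim t|\xi|^{-1/2}\lesssim 1$, so the Taylor expansion from (i) applies verbatim and yields terms of size $t,t^2$ times the global factor $e^{\gamma t}$; since $\gamma<0$ this already carries the exponential decay. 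On the low-frequency set $\{|\xi|<2t^2\}$ one deforms to $\Im m\,\xi=-\rho_1$ (resp.\ $=\rho_2$) and bounds the resulting integral $\int_{|u|<2t^2}e^{t(1+|u|)^{-1/2}}\,du$ directly via the substitution $v=t^2(1+u)^{-1}$; its growth in $t$ is then shown to be beaten by $e^{\gamma t}$.

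Your alternative is problematic on two counts. First, ``expanding $e^{t\Omega}$ around the double poles in the spirit of \eqref{H2E1R}--\eqref{H2E1R2}'' is not meaningful as stated: $e^{t\Omega}$ has \emph{essential} singularities at $\xi=-i$ and $\xi=5i/2$, and the residue computations you cite apply only after subtracting enough of the Taylor polynomial in $t\widehat K$ to downgrade the singularity to a pole --- which is precisely what the high-frequency piece of the cutoff accomplishes. Second, the claim that $\Re e\,\Omega\le-\rho_0$ on the complementary part of the shifted line is not established and is doubtful: as $\rho_1\uparrow 1$ the line $\Im m\,\xi=-\rho_1$ approaches the pole of $\widehat K$, so $\sup_{\Im m\,\xi=-\rho_1}\Re e\,\widehat K$ blows up and need not stay below $|\gamma|$. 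The $t$-dependent cutoff sidesteps both issues because it never requires a pointwise sign condition on $\Re e\,\Omega$ along the full contour.
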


\begin{proof}
For all $t$, by Taylor expanding the exponential function and using the Proposition \ref{ProphatK} we have, 
\begin{align*}
&e^{(\gamma +\widehat K(\xi ))t}=e^{\gamma t}\left(e^{ \widehat K(\xi )t} \right)=e^{\gamma t}
\sum _{ n=0 }^\infty \frac {\widehat K(\xi )^n t^n} {n!}
\end{align*}
with
\begin{align*}
&\sum_{n=0 }^\infty \frac { \widehat K(\xi )^n  t^n} {n!}
=1+ \widehat K(\xi )  t+\frac {\widehat K(\xi)^2 t^2}{2}+\mathcal O\left(\widehat K(\xi )   t \right)^3
=g_1(t, \xi )+h_1(t, \xi )
\end{align*}
and where
\begin{align*}
&g_1(t, \xi )=1+\pi  \sqrt { 2 \pi }\,  t |\xi| ^{-1/2}+2\pi ^3 \frac { t^2} {\xi }-t^2 \left(K(\varepsilon )i e^{-i\varepsilon \xi }+
K(-\varepsilon )  i e^{i\varepsilon \xi }\right) \xi^{-1}\\
&h_1(t, \xi )=\mathcal O\left(\widehat K(\xi )  t \right)^3=\mathcal O\left(|\xi |^{-1/2} t \right)^3,\,\,|\xi |\to \infty.
\end{align*}
Then,
\begin{align*}
&G(t, x)=\mathscr F^{-1}\left( e^{(\gamma +\widehat   K)t}\right)(x)=e^{\gamma t}\delta _0(x)+\mathscr F^{-1}\left(g(t)+h(t)  \right)(x)
\end{align*}
from where \eqref{hatK2E1} follows. On the other hand, consider a regular cut-off function $\chi$ such that $\chi (s)=1$ for $|s|\le 1$ and $\chi (s)=0$ for $|s|>2$. Then,
\begin{align}
\mathscr F^{-1}\left( e^{(\gamma +\widehat  K)t}\right)=e^{\gamma t}\Bigg[\mathscr F^{-1}(1)+&
\mathscr F^{-1}\left( e^{\widehat Kt}  \left[1-\chi\left(\frac {|\xi |} { t^2} \right) \right]-1\right)\nonumber\\
&\hspace{3cm}+\mathscr F^{-1}\left( e^{\widehat Kt} \chi\left(\frac {|\xi |} { t^2}\right) \right) \Bigg]. \label{hatK2E5}
\end{align}
When $|\xi |< t^2$ the first and second terms in the right-hand side of \eqref{hatK2E5} cancel. When $|\xi |>2 t^2$, in the second term in the right-hand side of (\ref{hatK2E5}) is
\begin{align*}
&e^{\widehat K(\xi )t}  \left[1-\chi\left(\frac {|\xi |} {t^2} \right) \right]-1=\left(e^{\widehat K(\xi ) t}-1\right)\\
&\text{with} \,\,|\widehat K(\xi )|t\le  C\frac { t} {|\xi |^{1/2}}.
\end{align*}
Then, for $t\gg1$ and $|\xi |>2t^2$, 
\begin{align*}
\left(e^{\widehat K(\xi ) t}-1\right)=\sum _{ n=1 }^\infty\frac {\widehat K(\xi )^n t^n} {n!}& =
\widehat K(\xi )  t+\widehat K(\xi )^2  t^2+\mathcal O\left(|\xi |^{-1/2}\,   t\right)^3\\
&=g_2(t, \xi )+h_2(t, \xi )
\end{align*}
where
\begin{align*}
&g_2(t, \xi )=\pi  \sqrt { 2 \pi } \frac { t } {|\xi| ^{1/2}}+2\pi ^3 \frac { t^2} {\xi }-\frac {i   t^2} {\xi }\left(K(2) e^{-i2\xi }+
K(-2) e^{i2\xi }\right)\\
&h_2(t, \xi )=\mathcal O\left(|\xi |^{-1/2}t\right)^3.
\end{align*}
Then we compute
\begin{align*}
&\mathscr F^{-1}\left(e^{\widehat  K(\xi ) t}-1\right)=\pi  \sqrt { 2 \pi } \frac { t} {|x|^{1/2}}+\sqrt 2\, \pi ^{7/2} t^2
\operatorname{sign} (x) \\
&\hspace{4cm} -\sqrt{\frac {\pi } {2}} t^2
\left(K(2)\operatorname{sign} (x-2)- K(-2)\operatorname{sign} (x+2)\right)+ H_2(t, x).
\end{align*}

Since $\widehat K$ is analytic in the strip $\Im m\xi \in (-1, 5/2)$, the third term in the right-hand side of (\ref{hatK2E5}) is estimated  using contour deformation. When $x\to -\infty$, for all $\rho >-1$ as close to $-1$ as desired,
\begin{align*}
&\int_{ \RR }e^{ix\xi } e^{\widehat K(\xi ) t}\chi\left(\frac {|\xi |} {t^2}\right)d\xi=\int  _{ \Im m \xi =\rho  }e^{ix\xi } e^{\widehat K(\xi )t}\chi\left(\frac {|\xi |} {t^2}\right) \dd \xi
\end{align*}
and where 
\begin{align*}
&\left|\int_{ \Im m \xi =\rho} e^{ix \xi } e^{\widehat K(\xi )t} \chi\left(\frac {|\xi |} {t^2}\right) \dd \xi\right|
\le Ce^{-\rho x} \int_{ \substack{ \Im m \xi =\rho, \\ |\xi |<2t^2}} e^{|\widehat K(\xi )|t}\dd \xi \\
& \le  Ce^{-\rho  x } \int  _{ \substack{\Im m \xi =\rho,\\ |\xi |<2 t^2 } } e^{ t(1+|\xi |)^{-1/2}}\dd \xi=  Ce^{-\rho  x } \int_{|u |<2 t^2  } e^{ t(1+|u+i\rho  |)^{-1/2}}\dd u.
\end{align*}
Now if $ t^2(1+u)^{-1}=v$, $1+u= t^2v^{-1}$ and $\dd u=- t^2v^{-2}\dd v$. Thus, 
\begin{align*}
e^{\gamma t}\int  _{ \RR }e^{ix\xi } e^{(\widehat K(\xi )-\gamma )t}d\xi &\le 
2Ce^{\gamma t}e^{-\rho  x } \int  _0^{2 t^2   } e^{  t(1+u)^{-1/2}}\dd u\\
&=2Ce^{\gamma t}e^{-\rho x}\ t^2\int _{ \frac { t^2} {1+2 t^2} }^{ t^2}e^{\sqrt v}\frac {\dd v} {v^2}.
\end{align*}
These integrals are found to be 
\begin{align*}
\tau^2\int  _{ \frac {\tau^2} {1+2\tau ^2} }^{t\tau^2}e^{\sqrt v}\frac {\dd v} {v^2}=- e^t\tau (1+\tau ) +e^{\frac {t\tau } {(1+2\tau^2}^{1/2}} (1+2\tau^2+\tau (1+2\tau^2)^{1/2} +\\
+\tau ^2\left(\text{ExpIntegralEi}(\tau )-\text{ExpIntegralEi}\left(\frac {\tau } {(1+2\tau ^2)^{1/2}}\right)\right)\le Ce^{\tau }
\end{align*}
and since $\gamma >1$, there exists $\delta _1>0$ such that
\begin{align*}
e^{\gamma t}\int  _{ \RR }e^{ix\xi } e^{(\widehat  K(\xi )-\gamma )t}d\xi \le  Ce^{-\delta _1\kappa  t}e^{-\rho x}.
\end{align*}

A similar argument gives, for $x\to \infty$ the existence of $\delta _2 >0$ and $\rho '<5/2$ as close to $5/2$ as desired, such that
\begin{align*}
e^{\gamma t}\int  _{ \RR }e^{ix\xi } e^{(\widehat  K(\xi )-\gamma )t}\dd \xi &  
\le Ce^{- \delta _2 \kappa t}e^{-\rho 'x}
\end{align*}
and part (ii) of  Corollary \ref{hatK2} follows.
\end{proof}

\subsection{Existence,  uniqueness and properties of solutions to  \eqref{E8}.}
\label{Existence}

Using the information obtained in the previous subsection, it is now easy to obtain solutions to \eqref{E8}.

\begin{prop} 
\label{prop1}
For all \( f_0 \in \mathscr{S}'(\mathbb{R}) \) and \( t > 0 \), \( e^{\Omega  t}\, \widehat{f}_0  \in \mathscr{S}'(\mathbb{R}) \) and \( \widehat{K} \widehat{f}_0 e^{\Omega  t} \in \mathscr{S}'(\mathbb{R}) \). Then, the distribution
\begin{align}
f(t) = \mathscr{F}^{-1}\left(e^{t\Omega }\, \widehat{f}_0  \right) =G(t)\ast  f_0  \in \mathscr{S}'(\mathbb{R}) \label{prop1.e2}
\end{align}
satisfies
\begin{align}
\frac{\partial f(t)}{\partial t} = \gamma f(t) + K\ast f(t) \in \mathscr{S}'(\mathbb{R}) \quad \text{for all } t > 0. \label{prop1.e3}
\end{align}
\end{prop}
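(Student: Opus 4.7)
The proof I have in mind has three layers: a multiplier lemma, an identification, and a time-derivative computation.

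The key observation is that $\Omega=\gamma+\widehat K$ belongs to the multiplier algebra $\mathcal{O}_M(\mathbb{R})$ of smooth functions with polynomially bounded derivatives (in fact, all derivatives bounded), and the same is true for $e^{t\Omega}$ for every $t\ge 0$. Smoothness of $\widehat K$ follows from the exponential decay of $K(x)$ at $\pm\infty$ given in the appendix (so $x^n K\in L^1(\mathbb{R})$ for every $n$), while Proposition \ref{ProphatK} yields $|\widehat K(\xi)|\le C(1+|\xi|)^{-1/2}$. The higher derivatives $\widehat K^{(n)}$ are bounded on $\mathbb{R}$ by the same type of argument. By Faà di Bruno,
\begin{equation*}
\partial_\xi^n e^{t\Omega(\xi)}=e^{t\Omega(\xi)}\,P_n\bigl(t\Omega'(\xi),\dots,t\Omega^{(n)}(\xi)\bigr),
\end{equation*}
which is uniformly bounded in $\xi$, uniformly for $t$ in compact subsets of $[0,\infty)$. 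Hence $e^{t\Omega}\in\mathcal{O}_M(\mathbb{R})$, and so $e^{t\Omega}\widehat f_0\in\mathscr{S}'(\mathbb{R})$ and also $\widehat K e^{t\Omega}\widehat f_0\in\mathscr{S}'(\mathbb{R})$ for any $f_0\in\mathscr{S}'(\mathbb{R})$.

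The second line of \eqref{prop1.e2}, $f(t)=G(t)\ast f_0$, is then \emph{defined} by the Fourier identity
\begin{equation*}
\widehat{G(t)\ast f_0}=\widehat G(t)\,\widehat f_0=e^{t\Omega}\widehat f_0,
\end{equation*}
since $G(t)=\mathscr F^{-1}(e^{t\Omega})\in\mathscr{S}'$ but the tempered convolution only makes sense through the multiplier action of $e^{t\Omega}$ on $\widehat f_0$. This gives $f(t)\in\mathscr{S}'(\mathbb{R})$ immediately.

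For the equation \eqref{prop1.e3} I would dualise against $\varphi\in\mathscr{S}(\mathbb{R})$ and write $\langle\widehat f(t),\varphi\rangle=\langle\widehat f_0,e^{t\Omega}\varphi\rangle$. The identity
\begin{equation*}
\frac{e^{(t+h)\Omega(\xi)}-e^{t\Omega(\xi)}}{h}=\Omega(\xi)\int_0^1 e^{(t+sh)\Omega(\xi)}\,\dd s
\end{equation*}
shows, via the same Faà di Bruno computation together with the uniform boundedness of $\Omega$ and its derivatives, that for $|h|\le 1$ and $t$ in a compact set the difference quotient $h^{-1}\bigl(e^{(t+h)\Omega}-e^{t\Omega}\bigr)\varphi$ converges to $\Omega e^{t\Omega}\varphi$ in every Schwartz seminorm (dominated convergence in each derivative). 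Thus
\begin{equation*}
\partial_t\langle\widehat f(t),\varphi\rangle=\langle\widehat f_0,\Omega e^{t\Omega}\varphi\rangle=\langle \Omega\widehat f(t),\varphi\rangle=\gamma\langle\widehat f(t),\varphi\rangle+\langle\widehat K\widehat f(t),\varphi\rangle,
\end{equation*}
so $\partial_t\widehat f(t)=\gamma\widehat f(t)+\widehat K\widehat f(t)$ in $\mathscr{S}'$. Inverting the Fourier transform --- noting that the inverse of multiplication by the bounded smooth function $\widehat K$ is convolution with $K$, in the same sense as above --- yields \eqref{prop1.e3}.

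The main obstacle is the Schwartz-seminorm convergence of the $t$-difference quotient, since Schwartz functions are tested in every seminorm and $e^{t\Omega}$ is not Schwartz. Once one shows that $\partial_\xi^n\bigl[(e^{(t+h)\Omega}-e^{t\Omega})/h-\Omega e^{t\Omega}\bigr]$ is bounded on $\mathbb{R}$ uniformly in small $h$, the remaining decay is supplied by $\varphi\in\mathscr{S}$ and Leibniz, and the rest of the argument is essentially bookkeeping.
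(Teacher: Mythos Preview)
Your argument is correct and is in fact more complete than the paper's own proof. The paper argues only briefly: it writes $e^{(\gamma+\widehat K)t}=e^{\gamma t}\bigl(1+(e^{\widehat K t}-1)\bigr)$, notes that $\widehat K\in L^\infty(\mathbb{R})$ (so $e^{(\gamma+\widehat K)t}\in L^\infty$) and that $\widehat K$ is analytic on the strip $\Im m(\xi)\in(-1,5/2)$, and from this simply asserts that $\widehat f_0\,e^{(\gamma+\widehat K)t}$ and $\widehat K\,\widehat f_0\,e^{(\gamma+\widehat K)t}$ lie in $\mathscr S'(\mathbb{R})$; the equation then follows by inverting the Fourier transform in \eqref{Esol1}. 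The time derivative is not discussed beyond this.

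What you add is the honest justification that the paper leaves implicit: mere membership in $L^\infty$ is not enough to multiply against an arbitrary tempered distribution, so you verify that $e^{t\Omega}$ actually lies in the multiplier class $\mathcal{O}_M$, using the exponential decay of $K$ to get $x^nK\in L^1$ and hence boundedness of all $\widehat K^{(n)}$, together with Fa\`a di Bruno. You also treat $\partial_t$ carefully via difference quotients converging in every Schwartz seminorm, rather than formally differentiating under the transform. Both proofs follow the same strategy (Fourier multiplier acting on $\widehat f_0$), but yours fills in the analytic details that make the tempered-distribution statement rigorous; the paper's version should be read as a sketch of exactly the argument you wrote out.
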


\begin{proof}
The idea is to write,
\begin{align*}
e^{(\gamma  +\widehat  K(\xi ))t}=e^{\gamma t} e^{\widehat K(\xi ))t}=e^{\gamma  t} \left(1+\left(e^{\widehat K(\xi )t}-1\right)\right)
\end{align*}
with  $\mathscr F^{-1}\left(e^{\gamma  t} \right)=e^{\gamma  t}\delta _0$.
Since  $\widehat K\in L^\infty$, $e^{(\gamma +\widehat K)t}\in L^\infty(\RR)$. Also, since $\widehat K$ is analytic in the strip $\Im m(\xi )\in (-1, 5/2)$ it follows that  $\widehat f_0 e^{(\gamma +\widehat K)t}, \widehat f_0 \widehat K  e^{(\gamma +\widehat K)t}\in \mathscr S'(\RR)$ for all $f_0\in \mathscr S'(\RR)$. Their inverse Fourier transforms are then well-defined and 
\begin{align*}
f(t)&=\mathscr F^{-1}\left(\widehat e^{(\gamma +\widehat K)t\,f_0 } \right)\in \mathscr S'(\RR),\,\forall\ t>0
\end{align*}
while 
\begin{align*}
\frac {\partial f(t)} {\partial t}&=\mathscr F^{-1}\left(e^{(\gamma +\widehat K)t} \widehat f_0 (\gamma +\widehat K) \right)\in \mathscr S'(\RR),\,\forall\ t>0,
\end{align*} 
and equation  (\ref{prop1.e3}) is satisfied by taking the inverse Fourier transform in both sides of \eqref{Esol1}.
\end{proof}
 
\begin{cor}
\label{Cor2.2}
Suppose that \( f_0 \in L^p \) for some \( p \ge 1 \) and let \( f \) be the distribution in (\ref{prop1.e2}). Then, \( f \in C([0, \infty); L^p(\mathbb{R})) \cap 
C^1((0, \infty); L^p(\mathbb{R})) \) and
satisfies
\begin{align}
\label{Cor2.2E1}
\frac{\partial f(t)}{\partial t} = \gamma f(t) + K\ast f(t), \quad \forall t > 0
\end{align}
in \( L^p(\mathbb{R}) \). Moreover, for all \( t > 0 \),
\begin{align}
\label{Cor2.2E2}
\| f(t) \|_p \le e^{\gamma t} \| f_0 \|_p \left( 1 + \| K \|_1 \left( e^{\kappa \| K \|_1 t} - 1 \right) \right).
\end{align}
\end{cor}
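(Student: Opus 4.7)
The plan is to exploit the decomposition $e^{t\Omega} = e^{\gamma t}e^{t\widehat K} = e^{\gamma t}\bigl(1 + (e^{t\widehat K}-1)\bigr)$ to split off the singular $\delta_0$ contribution, reducing everything to convolution against the $L^1$ kernel $H(t)$. Concretely, $G(t) = e^{\gamma t}(\delta_0 + H(t))$ and therefore
\begin{equation*}
f(t) \;=\; G(t)\ast f_0 \;=\; e^{\gamma t}\bigl(f_0 + H(t)\ast f_0\bigr).
\end{equation*}
Since the paper has already established $K\in L^1(\RR)$ (together with $L^p$ for $p\in[1,2)$), Proposition \ref{S1PH1} applied with $p=1$ gives $H(t)\in L^1(\RR)$ with $\|H(t)\|_1 \le \|K\|_1\bigl(e^{\|K\|_1 t}-1\bigr)$. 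Young's convolution inequality then yields $\|H(t)\ast f_0\|_p \le \|H(t)\|_1\|f_0\|_p$, and combining with the triangle inequality delivers the quantitative bound \eqref{Cor2.2E2} for every $p\ge 1$.

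Next I would establish the continuity statement $f\in C([0,\infty);L^p(\RR))$. The Taylor series
\begin{equation*}
H(t) \;=\; \sum_{n=1}^\infty \frac{K^{\ast n}}{n!}\,t^n
\end{equation*}
converges absolutely in $L^1$ uniformly on each bounded interval of $t$, because $\|K^{\ast n}\|_1\le \|K\|_1^n$ makes the numerical series $\sum \|K\|_1^n t^n/n!$ dominant. In particular $t\mapsto H(t)$ is continuous into $L^1$, so $t\mapsto H(t)\ast f_0$ is continuous into $L^p$, and the prefactor $e^{\gamma t}$ is harmless; continuity at $t=0$ uses $H(0)=0$.

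For the $C^1$ statement, the same series can be differentiated term by term to produce
\begin{equation*}
\partial_t H(t) \;=\; \sum_{n=1}^\infty \frac{K^{\ast n}}{(n-1)!}\,t^{n-1} \;=\; K + K\ast H(t),
\end{equation*}
again convergent in $L^1$ uniformly on compact time intervals, with $\|\partial_t H(t)\|_1\le \|K\|_1\,e^{\|K\|_1 t}$. Uniform convergence of the difference quotients in $L^1$ implies, via $\|\bigl(H(t+h)-H(t)\bigr)/h - \partial_t H(t)\|_1\to 0$ and Young's inequality, that $t\mapsto H(t)\ast f_0$ is $C^1$ into $L^p$ with derivative $\partial_t H(t)\ast f_0 = K\ast f_0 + K\ast H(t)\ast f_0$. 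Differentiating $f(t)=e^{\gamma t}(f_0+H(t)\ast f_0)$ and regrouping,
\begin{equation*}
\partial_t f(t) \;=\; \gamma\, e^{\gamma t}\bigl(f_0 + H(t)\ast f_0\bigr) + e^{\gamma t}\bigl(K\ast f_0 + K\ast H(t)\ast f_0\bigr) \;=\; \gamma f(t) + K\ast f(t),
\end{equation*}
which is the equation \eqref{Cor2.2E1} in $L^p(\RR)$.

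The only mildly delicate point is the commutation of the time derivative with the convolution against $f_0$ when $p>2$, since then $H(t)$ need not lie in $L^p$ and one cannot invoke Proposition \ref{S1PH1} directly; this is precisely why the argument is routed through $H(t)\in L^1$ and Young's inequality with the $(1,p,p)$ triple. Everything else is standard manipulation of the absolutely convergent $L^1$-valued exponential series.
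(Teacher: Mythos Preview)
Your approach is essentially the same as the paper's: both use the decomposition $f(t)=e^{\gamma t}\bigl(f_0+H(t)\ast f_0\bigr)$, invoke Proposition~\ref{S1PH1} with $p=1$ to bound $\|H(t)\|_1$, and then apply Young's inequality in the $(1,p,p)$ form. Your version is in fact more complete, since the paper's proof is quite terse and does not spell out the $C([0,\infty);L^p)\cap C^1((0,\infty);L^p)$ regularity via the $L^1$-convergent exponential series the way you do.
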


\begin{proof} Since \eqref{prop1.e2} may also be written as
\begin{align*}
f(t, x)=e^{\gamma t}\left( f_0(x)+(H(t)\ast f_0 )(x)\right)
\end{align*}
and $H(t)\in L^1(\RR)$, it follows that $f(t)\in L^p(\RR)$ if  $f_0\in L^p(\RR)$.
Then, $f(t)\ast K \in L^p(\RR)$, since $K\in L^1(\RR)$. 
The right-hand side of \eqref{prop1.e3} is then in $L^p(\RR)$ and the Corollary follows using Proposition \ref{S1PH1} with $p=1$.
\end{proof}

\begin{prop}
\label{S3PP1}
For all  $f_0\in L^p (\RR),\, p\ge 1$  the function $f$ given in (\ref{prop1.e2}) is the unique solution of  (\ref{Cor2.2E1}) in $C^1((0, T); L^p(\RR)\cap C([0, T); L^p(\RR))$.
\end{prop}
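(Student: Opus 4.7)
The plan is to exploit the linearity of equation \eqref{Cor2.2E1} together with Young's convolution inequality and Gronwall's lemma. Let $f_1, f_2$ be two solutions in $C^1((0,T); L^p(\RR)) \cap C([0,T); L^p(\RR))$ sharing the initial datum $f_0$, and set $w = f_1 - f_2$. By linearity, $w$ satisfies
\begin{align*}
\partial_t w(t) = \gamma\, w(t) + K \ast w(t) \quad \text{in } L^p(\RR), \ \forall\, t \in (0, T),
\end{align*}
together with $w \in C([0, T); L^p(\RR))$ and $w(0) = 0$.

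The first step is to pass from the differential to the integrated formulation. Since $w \in C^1((0, T); L^p(\RR)) \cap C([0, T); L^p(\RR))$, the fundamental theorem of calculus in the Banach space $L^p(\RR)$ gives
\begin{align*}
w(t) = \int_0^t \bigl(\gamma\, w(\tau) + K \ast w(\tau)\bigr) \dd \tau, \quad \forall\, t \in [0, T),
\end{align*}
where the integral is a Bochner integral. Taking $L^p$-norms and using $\|K \ast w(\tau)\|_p \le \|K\|_1 \|w(\tau)\|_p$ (Young's inequality, valid since $K \in L^1(\RR)$ as recalled in Subsection \ref{SKHW}), one obtains
\begin{align*}
\|w(t)\|_p \le (|\gamma| + \|K\|_1) \int_0^t \|w(\tau)\|_p \dd \tau, \quad \forall\, t \in [0, T).
\end{align*}

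The second step is a direct application of the integral form of Gronwall's inequality: since the bound is linear in $\|w\|_p$ and the initial value is zero, it forces $\|w(t)\|_p \equiv 0$ on $[0, T)$, hence $f_1 \equiv f_2$ and uniqueness follows.

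The argument is essentially routine and I do not anticipate a genuine obstacle; the only mildly delicate point is justifying the integrated identity up to $t=0$, for which one combines the $C^1$ regularity on the open interval $(0,T)$ with norm-continuity of $w$ at the endpoint $t=0$ to extend the Bochner integral representation down to the origin. Note that this uniqueness argument does not require $p \in [1,2]$ (where a Fourier-transform uniqueness proof would apply directly via \eqref{Esol1}), so the Gronwall approach handles the full range $p \ge 1$ uniformly.
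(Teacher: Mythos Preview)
Your proof is correct and follows essentially the same route as the paper: reduce to the zero initial datum by linearity, control the convolution term via Young's inequality $\|K\ast w\|_p\le \|K\|_1\|w\|_p$, and close with Gronwall. The only cosmetic difference is that the paper differentiates $\|f(t)\|_p^p$ directly (formally multiplying the equation by $p f^{p-1}$), whereas you pass to the integrated Bochner form before taking norms; your variant is arguably cleaner since it sidesteps the sign issues in $f^{p-1}$ for general $p$.
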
 
\begin{proof}
Suppose that $f_0=0$, and multiply the equation by $pf^{p-1}$,
\begin{align*}
\frac {\dd} {\dd t} \| f(t) \|^p_p & = 2\gamma p \| f(t) \|^p_ p+p\int  _{\RR }f^{p-1}(t, x) (f(t)\ast K)(x)\dd x\\
& \le 2\gamma p \| f(t) \|^p_ p+p \| f^{p-1} \|_{ p' } \| f(t)\ast K \|_p\\
& \le 2\gamma p \|f(t)\|^p_ p+p\left(\int  _{ \RR }|f(t, x)|^{p}dx\right)^{\frac{p-1}{p}} \| K \|_1 \|f(t) \|_p\\
& =2\gamma p \| f(t) \|^p_ p+p \| f(t)\|_p^p, 
\end{align*}
the fact that $K \in L^1(\mathbb{R})$. Then, $\| f(t) \|_p^p \le \| f(0) \|_p^pe^{(2\gamma +p)t}$ and $\| f(t) \|_p^p=0$.
\end{proof}

\begin{cor}
\label{S3C2}
If $f_0\in L^p(\RR)$ for some $p\ge 1$ is such that $f_0\ge0$, the solution  of  (\ref{Cor2.2E1}) given in (\ref{prop1.e2}) is such that $f(t)\ge 0$ for all $t>0$.
\end{cor}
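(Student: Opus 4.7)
The plan is to use the series representation of the propagator established implicitly in the proof of Corollary~\ref{Cor2.2} together with Proposition~\ref{S1PH1}: the solution admits the $L^p$-convergent expansion
\[
f(t) = e^{\gamma t}\bigl(f_0 + H(t)\ast f_0\bigr) = e^{\gamma t}\sum_{n=0}^{\infty}\frac{t^n}{n!}\,K^{\ast n}\ast f_0,\qquad K^{\ast 0}:=\delta_0.
\]
Assuming $K\geq 0$, an iterated convolution of a non-negative integrable function with itself is non-negative, so each $K^{\ast n}\geq 0$, and hence for $f_0\geq 0$ every summand in the series is a non-negative $L^p$-function. The series converges in $L^p(\RR)$ because $\sum_{n}\tfrac{t^n}{n!}\|K^{\ast n}\|_p\leq \|K\|_p\,e^{\|K\|_1 t}<\infty$ by the argument given in Proposition~\ref{S1PH1}, and since non-negativity passes to $L^p$-limits (after extracting an almost-everywhere convergent subsequence), $f(t)\geq 0$ almost everywhere, which is the claim.

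The only step that is not purely formal is the non-negativity of $K$: if $K$ were signed, convolution powers $K^{\ast n}$ could take negative values and positivity of the semigroup generated by~(\ref{Cor2.2E1}) would in general fail, as is easily illustrated by signed kernels such as $K=\delta_a-\delta_b$. In the present setting however $K(x)=\mathcal V(e^{x/2})\geq 0$ by the explicit form of $\mathcal V$ derived in \cite{MM} and recalled in Section~\ref{SKHW}, which makes the argument conclusive. An equivalent and more robust route, closer in spirit to the uniqueness proof of Proposition~\ref{S3PP1}, is the following energy estimate: for the negative part $f_-:=\max(-f,0)$, differentiating $\|f_-(t)\|_p^p$ (legitimate by $f\in C^1((0,\infty);L^p)$) and using equation~(\ref{Cor2.2E1}) together with $(-f)\1_{\{f<0\}}=f_-$ gives
\[
\frac{\dd}{\dd t}\|f_-(t)\|_p^p = p\gamma\|f_-\|_p^p - p\int f_-^{p-1}\,\1_{\{f<0\}}(K\ast f)\,\dd x;
\]
splitting $K\ast f = K\ast f_+ - K\ast f_-$ with $f_\pm\geq 0$, using $K\geq 0$ to see that the $K\ast f_+$ contribution has the favorable sign, and bounding the remainder by $\|f_-^{p-1}\|_{p'}\|K\ast f_-\|_p\leq \|K\|_1\|f_-\|_p^p$ via H\"older and Young, one obtains $\tfrac{\dd}{\dd t}\|f_-\|_p^p\leq p(\gamma+\|K\|_1)\|f_-\|_p^p$, whence Gr\"onwall combined with $f_-(0)=0$ (from $f_0\geq 0$) yields $f_-(t)\equiv 0$ for all $t\geq 0$; the degenerate case $p=1$ is covered by a standard mollification of the sign function.
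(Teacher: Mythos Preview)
Your proposal is correct and your first argument is essentially the same as the paper's: the paper strips off the factor $e^{\gamma t}$ and observes that the resulting equation $\partial_t g = K\ast g$ is solved by Picard iteration $\mathscr T(g)(t)=g_0+\int_0^t K\ast g(s)\,\dd s$, which preserves non-negativity because $K\ge 0$; the limit of this iteration is precisely your exponential series $\sum_{n\ge 0}\frac{t^n}{n!}K^{\ast n}\ast f_0$, so the two arguments coincide. Your second route via an energy estimate on $f_-$ is a genuinely different alternative not used in the paper; it is slightly more robust in that it would extend to perturbations of the kernel whose negative part is controlled, whereas both the series/iteration argument and your first route need $K\ge 0$ termwise.
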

\begin{proof}
The solution $f$ may also be obtained from a solution of
\begin{align*}
\frac {\partial g(t)} {\partial t}=K\ast g(t),\,\,\,g(t, x)=e^{-2\gamma  t}f(t, x).
\end{align*}
A solution of that equation may be obtained as a fixed point of
\begin{align*}
\mathscr T (g)(t, x)=g_0(x)+\int _0^t (K\ast g(s))(x)\dd s
\end{align*}
for $t$ small enough. Since $K\ge 0$, if $f_0\ge 0$ and $g(s)\ge 0$ for all $s \in [0,t]$ and thus the same is true for $\mathscr T (g(t))$.
\end{proof}

The next result solves  the Cauchy problem for equation (\ref{E8}) in weighted $L^p$ spaces.
\begin{prop} 
\label{S4P1}
Suppose that $\theta \in (-1, 5/2)$. Then, for all $p \ge 1$ and for all bounded intervals $I \subset [0, \infty)$, there exists a constant $C_{H,1} > 0$ such that
\begin{align*}
\|f_0 \ast H(t)\|_{L^p(e^{\theta x})} \le C_{H,1} \ell(t) \|f_0\|_{L^p(e^{\theta x})}, \quad \forall t \in I.
\end{align*}
\end{prop}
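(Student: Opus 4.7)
The plan is to reduce the weighted convolution estimate to an unweighted Young's inequality by transferring the exponential weight to both factors, and then to use the pointwise bounds on $H(t)$ from Corollary \ref{H2} to produce the required $L^1$ bound on the weighted kernel. Observe that for any $x,y\in\RR$ one has the elementary identity $e^{\theta x/p}=e^{\theta(x-y)/p}e^{\theta y/p}$. Thus, taking absolute values in the convolution integral,
\begin{align*}
|(f_0\ast H(t))(x)|\,e^{\theta x/p}
\le \int_{\RR}\bigl(|f_0(x-y)|\,e^{\theta(x-y)/p}\bigr)\bigl(|H(t,y)|\,e^{\theta y/p}\bigr)\dd y,
\end{align*}
so the weighted convolution is controlled by the unweighted convolution of the two exponentially reweighted functions.

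Applying the classical Young inequality (with exponents $1+1/p = 1/p + 1$) to the right-hand side yields
\begin{align*}
\|f_0\ast H(t)\|_{L^p(e^{\theta x})}
&\le \bigl\||f_0|\,e^{\theta\cdot/p}\bigr\|_{L^p(\RR)}\bigl\||H(t)|\,e^{\theta\cdot/p}\bigr\|_{L^1(\RR)}\\
&= \|f_0\|_{L^p(e^{\theta x})}\;\|H(t)\|_{L^1(e^{\theta x/p})}.
\end{align*}
The whole proof is therefore reduced to showing that $\ell(t):=\|H(t)\|_{L^1(e^{\theta x/p})}$ is finite for every $t>0$ and uniformly bounded on bounded intervals $I\subset[0,\infty)$, with an appropriate constant $C_{H,1}$ depending only on $\theta, p$ and $I$.

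To bound $\ell(t)$, I would split the integral at $x=0$ and invoke the pointwise estimates \eqref{S3H8}. On $x<0$ one has $|H(t,x)|\le C_H\,t\,e^{D_H t}e^{x}(1+|x|^3)$, so the contribution to $\ell(t)$ is controlled by
\begin{align*}
C_H\,t\,e^{D_H t}\int_{-\infty}^{0} e^{x(1+\theta/p)}(1+|x|^3)\dd x,
\end{align*}
which converges precisely when $1+\theta/p>0$, i.e. when $\theta>-p$. On $x>0$ one has $|H(t,x)|\le C_H\,t\,e^{D_H t}e^{-5x/2}(1+|x|^3)$, giving
\begin{align*}
C_H\,t\,e^{D_H t}\int_{0}^{\infty} e^{x(-5/2+\theta/p)}(1+|x|^3)\dd x,
\end{align*}
which converges precisely when $\theta<5p/2$. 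Since $p\ge 1$ and $\theta\in(-1,5/2)$, both conditions are automatically satisfied (indeed $\theta/p\in(-1,5/2)$ whenever $\theta\in(-1,5/2)$ and $p\ge 1$, because $|\theta/p|\le|\theta|$). This gives an explicit bound of the form $\ell(t)\le c_{\theta,p}\,t\,e^{D_H t}$, which is uniformly bounded on any compact time interval.

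The only genuinely subtle point is the matching of the admissible range for $\theta$ with the decay/growth exponents of $H(t)$ at $\pm\infty$: the constraints $\theta>-1$ and $\theta<5/2$ coincide exactly with the poles of $\widehat K$ at $\xi=-i$ and $\xi=5i/2$ that were used to derive \eqref{S3H8}, and any attempt to extend the weight beyond this range would lose the integrability of the kernel. Everything else is bookkeeping: collecting constants and noticing that $\ell$ inherits the elementary growth $t\,e^{D_H t}$ from the $H$-estimates.
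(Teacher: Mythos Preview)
Your argument is correct, and it follows a slightly different route from the paper's. The paper does not push the weight through via $e^{\theta x/p}=e^{\theta(x-y)/p}e^{\theta y/p}$ and then apply unweighted Young; instead it first treats the case $p=1$ directly (where the weighted $L^1$ norm factorizes exactly), and for $p>1$ it uses the H\"older splitting $|H\ast f_0|^p\le \|H(t)\|_{L^1(e^{\theta x})}^{p/p'}\bigl(|H(t)|\ast |f_0|^p\bigr)$ (as in \cite[p.~104]{B}) before integrating against $e^{\theta x}\dd x$. The net effect is that the paper's constant involves $\|H(t)\|_{L^1(e^{\theta x})}$ with the \emph{full} weight $\theta$, while your bound involves $\|H(t)\|_{L^1(e^{(\theta/p) x})}$ with the reduced weight $\theta/p$; both are finite on the same range $\theta\in(-1,5/2)$, $p\ge 1$, for exactly the reason you give. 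Your approach is a touch more direct and makes the role of Young's inequality explicit; the paper's approach has the small advantage that the resulting constant $C_{H,1}$ depends only on $\theta$ and not on $p$.

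One cosmetic point: you redefine $\ell(t)$ mid-proof as $\|H(t)\|_{L^1(e^{\theta x/p})}$, whereas in the statement $\ell(t)$ is meant to be the fixed growth function $C_H\,t\,e^{D_H t}$ coming from \eqref{S3H8}. Since you immediately bound your $\ell(t)$ by $c_{\theta,p}\,t\,e^{D_H t}$ this causes no harm, but it would be cleaner to keep $\ell(t)$ reserved for the latter and absorb the $\theta,p$–dependent integral into $C_{H,1}$.
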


\begin{proof} 
The proof follows as in  \cite[page 104]{B}, by exploiting properties of convolutions. 
We first remind that from Corollary \ref{H2}, for all $\theta\in (-1, 5/2 )$
\begin{align*}
\int _{ \RR }|H(t, x)|e^{\theta x}\dd x\le \ell(t) \left(
\int _{ -\infty }^0e^{(\theta+1) x}(1+|x|^3)\dd x+
\int _0^\infty e^{(\theta-\frac {5} {2})x }(1+x^3) \dd x\right)
=C _{ H, 1 }\ell(t),
\end{align*}
from where we deduce that $H(t)\in L^1\left(e^{\theta x}\right)$. Then observe that if $f_0\in  L^1\left(e^{\theta x}\right)$,
\begin{align*}
&\int  _{ \RR }e^{\theta x}|(f_0\ast H(t))(x)|dx
%=\int _{ \RR }|f_0(y)| \int  _{ \RR }e^{\theta x}|H(t, x-y)|dydx\\
%&=\int _{ \RR }|f_0(y)|e^{\theta y} \int  _{ \RR }e^{\theta (x-y)}|H(t, x-y)|dydx
=\|f_0\|_{L^1\left(e^{\theta x}\right)  } \| H(t) \|_{L^1\left(e^{\theta x}\right)  }.
\end{align*}
Also, 
if $f_0\in L^p(e^{\theta x})$ then by the previous case the function  $y\mapsto H(t, x-y) f^p_0(y)$ belongs to $L^1\left(e^{\theta x}\right)$, and 
since $H(t, y)^{1/p'}\in L^{p'}\left(e^{\theta x}\right)$, it follows from H\"older's inequality that 
%\begin{align*}
%H(t, x-y) f_0(y)=H(t, y)^{1/p'}H^{1/p}(t, x-y) f_0(y)\in L^1_y(e^{\theta y}).
%\end{align*}
%and,
%\begin{align*}
%\int  _{ \RR }|H(t, x-y) f_0(y)|dy\le ||H(t)|| _{ L^1(e^{\theta x}) }^{1/p'}\left(\int  _{ \RR } H (t, x-y) |f_0(y)|^p e^{\theta y}dy\right)^{\frac {1} {p}}
%\end{align*}
%that is,
\begin{align*}
|(H(t)\ast f_0)(x)|^p\le  \| H(t)\|_{ L^1(e^{\theta x}) }^{p/p'} \left( |H(t)| \ast |f_0|^p \right) (x).
\end{align*}
Finally, $|f_0|^p\in L^1\left(e^{\theta x}\right)$, and so we conclude as
%it follows from the first step, $H(t)\ast f_0\in L^p\left(e^{\theta x}\right)$ and
\begin{align*}
\| H(t)\ast f_0 \|^p_{L^p\left(e^{\theta x}\right)}\le \|H(t)\| _{L^1\left(e^{\theta x}\right)}^{p/p'} \| H(t)\|_{ L^1(e^{\theta x}) } \| f_0 \|^p _{L^p\left(e^{\theta x}\right)}.
\end{align*}
\vskip -0.5cm
\end{proof}

The Cauchy problem for equation \eqref{E8} is also solved in the following spaces
\begin{align}
&X_{A, B} = \{ f \in L^\infty_{\text{loc}}(\mathbb{R}) \mid \| f \|_{X_{A, B}} < \infty \}, \\
&\| f \|_{X_{A, B}} = \sup_{x < 0} e^{-Ax} |f(x)| + \sup_{x > 0} e^{-Bx} |f(x)|.
\end{align}
\begin{cor}
\label{cor4.5}
Suppose that $f_0 \in L^p_{\text{loc}}(\mathbb{R})\cap X_{A, B}$ for some $p \geq 1$. \\
If $-\frac{5}{2} < A, B < 1$, $H(t) \ast f_0 \in X_{A, B}$ and there exists a constant $C = C(A, B)$ such that
\begin{align}
\label{cor4.5E2}
\| H(t) \ast f_0 \|_{X_{A, B}} \leq C \ell(t).
\end{align}
If $B < -\frac{5}{2}$ and $A > 1$, then there exists a constant $C > 0$ such that
\begin{align}
\sup_{x < 0} |H(t) \ast f_0(x)| (1 + |x|^3)e^{-x} + 
\sup_{x > 0} |H(t) \ast f_0(x)| (1 + |x|^3)e^{\frac{5x}{2}} < C \ell(t). 
\label{cor4.5E7}
\end{align}
If moreover $B < -\frac{1}{2} < A$, then
\begin{align}
\label{cor4.5E3}
\int_{\mathbb{R}} |(H(t) \ast f_0)(x)| e^{\frac{x}{2}} \, \dd x < \infty, \quad \forall t > 0.
\end{align}
\end{cor}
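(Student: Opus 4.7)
The plan is to deduce every statement of the corollary from the pointwise estimates on $H(t,\cdot)$ given by (\ref{S3H8}) in Corollary \ref{H2}, combined with the pointwise bounds
\begin{equation*}
|f_0(y)| \le \|f_0\|_{X_{A,B}}\,e^{Ay}\quad(y<0),\qquad |f_0(y)| \le \|f_0\|_{X_{A,B}}\,e^{By}\quad(y>0),
\end{equation*}
which follow directly from the definition of $X_{A,B}$. Writing $(H(t)\ast f_0)(x)=\int_{\RR}H(t,x-y)f_0(y)\,\dd y$ and inserting these bounds reduces the analysis to a finite number of explicit one-dimensional exponential-times-polynomial integrals in $y$, indexed by the signs of $y$ and of $x-y$. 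The local integrability of $f_0$ near the origin is only needed to give meaning to the convolution on compact pieces of $\RR$; it contributes a harmless bounded term.

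For (\ref{cor4.5E2}), fix $x<0$ and split the $y$-integration into $\{y<x\}$, $\{x<y<0\}$ and $\{y>0\}$. Inserting the relevant branches of the bounds on $|H(t,\cdot)|$ and $|f_0|$ and performing the substitutions $u=x-y$ or $u=y-x$, each term takes one of the shapes
\begin{equation*}
e^{Ax}\int_0^\infty e^{-\lambda u}(1+u^3)\,\dd u,\qquad e^{x}\int_0^{|x|}e^{\pm\mu u}(1+u^3)\,\dd u,\qquad e^{x}\int_0^\infty e^{-(1-B)u}(1+(u+|x|)^3)\,\dd u,
\end{equation*}
with $\lambda\in\{A+5/2,\,1-A\}$ and $\mu=1-A$. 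The hypothesis $-5/2<A<1$ ensures that these integrals converge, and a short computation shows that the $e^x$–prefactor in the last two terms, combined with the polynomial growth in $|x|$, is dominated by $Ce^{Ax}$ (since $e^{(1-A)x}\to 0$ as $x\to-\infty$). Summing yields $\sup_{x<0}e^{-Ax}|(H(t)\ast f_0)(x)|\le C\ell(t)\|f_0\|_{X_{A,B}}$, with $\ell(t)=te^{D_Ht}$; splitting symmetrically at $\{y<0\}$, $\{0<y<x\}$, $\{y>x\}$ for $x>0$ gives $Ce^{Bx}$ under $-5/2<B<1$.

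For (\ref{cor4.5E7}) the same splitting is used, but now the stronger decay of $f_0$ at both $\pm\infty$ (ensured by $A>1$ and $B<-5/2$) makes the integrals in $y$ produce factors that are bounded independently of $x$. The only surviving $x$-dependence is inherited from $H(t,x-y)$ at large $|x-y|$, and expanding the polynomial factor as $(1+|x-y|^3)\lesssim \sum_k\binom{3}{k}|x|^{3-k}|y|^k$ shows that the envelope is precisely $\ell(t)(1+|x|^3)e^x$ on $x<0$ and $\ell(t)(1+|x|^3)e^{-5x/2}$ on $x>0$, which is (\ref{cor4.5E7}). Finally, (\ref{cor4.5E3}) is obtained by integrating the already–established pointwise bounds against $e^{x/2}$: the condition $A>-1/2$ makes $e^{(A+1/2)x}$ integrable on $(-\infty,0)$ and $B<-1/2$ makes $e^{(B+1/2)x}$ integrable on $(0,\infty)$, and the polynomial factors $(1+|x|^3)$ are absorbed by the strict inequalities.

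The main technical obstacle throughout is the careful bookkeeping of the polynomial factor $(1+|x-y|^3)$: after expansion it produces terms of the type $|x|^{3-k}|y|^k$, and each power of $|x|$ must be absorbed by the strictly positive exponential margin coming from the strict inequalities $A<1$, $A>-5/2$, $B<1$, $B>-5/2$ (or their reversed counterparts in the second case). Once this bookkeeping is set up, all bounds are derived by routine manipulation of one-dimensional exponential integrals, and the continuity of the operator $f_0\mapsto H(t)\ast f_0$ in the relevant weighted norms follows at once.
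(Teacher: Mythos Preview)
Your approach is correct and matches the paper's proof essentially step for step: both arguments insert the pointwise bound (\ref{S3H8}) on $H(t,\cdot)$ together with the $X_{A,B}$ bounds on $f_0$, split the convolution $\int_{\RR}H(t,x-y)f_0(y)\,\dd y$ according to the signs of $y$ and $x-y$, and reduce to explicit exponential-times-polynomial integrals whose convergence is governed by the inequalities $-\tfrac{5}{2}<A,B<1$ (respectively $A>1$, $B<-\tfrac{5}{2}$). Your treatment of the polynomial factor via the binomial expansion and absorption into the strict exponential margins is exactly the mechanism implicit in the paper's displayed estimates (\ref{cor4.5E9}) and (\ref{cor4.5E11}).
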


\begin{proof} By hypothesis $f_0\in L^p(\RR)$ and then, by Proposition \ref{S3PP1}, the function $H(t)\ast f_0$  is the unique solution of \eqref{Cor2.2E1} in $C^1((0, T); L^p(\RR)\cap C([0, T); L^p(\RR))$. 

We start with the case when $x>0$. As long as $\frac {5} {2} + A > 0$, $B-1<0$,
\begin{align}
\label{cor4.5E9}
&|(H(t)\ast f_0)(x)|\le C_0\ell(t)e^{-\frac {5} {2} x}\int _{-\infty }^0(1+(x-y)^3)e^{\left(\frac {5} {2} +A\right)y}\dd y\nonumber\\
&\hspace{3cm}+ C_0\ell(t)e^{-\frac {5} {2} x}\int _0^x e^{\left(\frac {5} {2} +B\right)y}(1+(x-y)^3) \dd y\nonumber \\ & 
\hspace{3.5cm}+C_0\ell(t)e^{x}\int_x^\infty  e^{(B-1 )y}(1+(x-y)^3)\dd y\nonumber\\
&\hspace{2.7cm} \le C'\ell(t) \left(e^{-\frac {5} {2} x}\left(1+x^3 \right)+ e^{Bx}+ e^{Bx}\right).
\end{align}
%Since $B<-5/2$, it follows that $| (H(t) \ast f_0)(x)|\leq C'\ell(t) e^{Bx}$. 
Similarly for $x<0$, as long as $\frac {5} {2} +A>0$, $B-1<0$,
\begin{align}
\label{cor4.5E11}
&|(H(t)\ast f_0)(x)|\le C_0\ell(t) e^{-\frac {5} {2} x}\int_{ -\infty }^xe^{\left(\frac {5} {2} +A\right)y}(1+(x-y)^3)\dd y \nonumber\\
&\hspace{3cm}+C_0\ell(t) e^{x}\int_x^0 e^{\left(A-1 \right)y}(1+(x-y)^3)\dd y\nonumber\\
& \hspace{3.5cm} + C_0\ell(t) e^{x}\int_0^\infty e^{(B-1)y}(1+(x-y)^3)\dd y\nonumber\\
&\hspace{2.7cm} \le C'\ell(t) \left(e^{Ax}+e^{Ax}+e^x(1+|x|^3)\right).
\end{align}
As $B<-5/2$, the term $e^{Bx}$ wins among all the terms for $x>0$ while as $A<1$, $e^{Ax}$ wins among all the terms for $x<0$. This concludes the first part of the Corollary. 
Property \eqref{cor4.5E7} follows as well from \eqref{cor4.5E9} and \eqref{cor4.5E11} depending on the hypothesis on $A$ and $B$.
 Property (\ref{cor4.5E3}) follows from,
\begin{align*}
\int  _{ \RR }|(H(t)\ast f_0)(x)|e^{\frac {x} {2}}dx\le C_0\ell(t) \int  _{ -\infty }^0 e^{\left(A+\frac {1} {2} \right)y}\dd y+
C_0\ell(t) \int_0^\infty e^{\left(B+\frac {1} {2} \right)y}\dd y
\end{align*}
where the two integrals in the right-hand side converge since $A+\frac {1} {2}>0$ and $B+\frac {1} {2}<0$.
\end{proof}

\begin{rem}
\label{remS}
The map $S(t): f_0\to H(t)\ast f_0$ belongs to $\mathscr L\left(L^p(\RR)\right)\cap \mathscr L\left(L^p(e^{\theta x})\right)$  for $p\ge 1$ 
 and  $S(t)\in \mathscr L(X _{ A, B })$ too when $A$ and $B$ satisfy $-5/2<A, B<1$. For all $t>0$,
\begin{align*}
&\| S(t)\| _{ \mathscr L\left(L^p(\RR)\right) }\le  e^{\gamma t}\left(1+ \| K\|_1\left(e^{\kappa \|K\|_1t}-1 \right)\right)\\
&\| S(t) \| _{\mathscr L\left(L^p(e^{\theta x})\right)  }\le e^{\gamma  t}\Big(1+C _{ H, 1 }\ell(t)\Big)\\
&\|S(t)\| _{  \mathscr L(X _{ A, B }) }\le e^{\gamma t}\Big(1+C(A, B)\ell(t)\Big).
\end{align*}
\end{rem}

Let us  show for further comparison, some properties of the maps $S(t)$ defined in Remark \ref{remS} acting on spaces $X _{ A, B }$.
\begin{cor}
\label{corass}
Suppose that, for some $A \in \left( -\frac{5}{2},1 \right)$, 
\begin{equation}
 f_0(x) = e^{Ax} \1_{x<0}. \label{corassE00}
 \end{equation}
Then, for all $a' \in (-1, -A)$, there exists a constant $C > 0$ such that
\begin{align}
\left\vert  f(t, x) - e^{t \Omega(-iA)} e^{Ax} 
\right\vert  \leq C t e^{t \gamma} e^{-a'x}, \quad \forall x < 0, \label{corassE1}
\end{align}
and for all $a'' \in \left(-A, \frac{5}{2} \right)$, there exists a constant $C' > 0$ such that
\begin{align}
\left\vert 
f(t, x) - e^{t \gamma} e^{Ax} 
\right\vert  \leq C t e^{t \gamma} e^{-a''x}, \quad \forall x > 0. \label{corassE2}
\end{align}
If, on the other hand, $f_0(x) = x^B \1 _{x>0}$ for some $B \in \left(-\frac{5}{2}, 1\right)$, then for all $b' \in \left(-B, \frac{5}{2}\right) $, there exists a constant $C > 0$ such that
\begin{align}
\left\vert 
f(t, x) - e^{t \Omega(-iB)} e^{Bx} 
\right\vert \leq C t e^{t \gamma} e^{-b'x}, \quad \forall x > 0, \label{corassE10}
\end{align}
and for all $b'' \in (-1, -B)$, there exists a constant $C' > 0$ such that
\begin{align}
\left\vert 
f(t, x) - e^{t \gamma} e^{Bx} 
\right\vert \leq C t e^{t \gamma} e^{-b''x}, \quad \forall x > 0. \label{corassE20}
\end{align}
\end{cor}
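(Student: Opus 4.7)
My plan is a direct Fourier contour-shift argument. For the initial datum $f_0(x)=e^{Ax}\1_{x<0}$ with $A\in(-5/2,1)$, one computes
\begin{equation*}
\widehat{f_0}(\xi)=\frac{1}{A-i\xi},
\end{equation*}
meromorphic in $\CC$ with a single simple pole at $\xi=-iA$ lying inside the strip $\Im\xi\in(-1,5/2)$ of analyticity of $\Omega(\xi)=\gamma+\widehat K(\xi)$. By Proposition~\ref{prop1} the solution is $f(t,x)=\mathscr F^{-1}\!\big(e^{t\Omega}\widehat{f_0}\big)(x)$, which I would rewrite as a contour integral along the line $\Im\xi=c$ for some $c>-A$. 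All remainder estimates below hinge on the sharp large-$|\xi|$ decay $\widehat K(\xi)=O(|\xi|^{-1/2})$ furnished by Proposition~\ref{ProphatK}.

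For \eqref{corassE1}, since $x<0$ I shift the contour down to $\Im\xi=a'$ with $a'\in(-1,-A)$. Crossing the simple pole of $\widehat{f_0}$ at $-iA$ contributes the residue $ie^{Ax}e^{t\Omega(-iA)}$, so the residue theorem yields
\begin{equation*}
f(t,x)=e^{t\Omega(-iA)}e^{Ax}+\frac{1}{2\pi}\int_{\Im\xi=a'}e^{ix\xi}e^{t\Omega(\xi)}\widehat{f_0}(\xi)\dd\xi.
\end{equation*}
Splitting $e^{t\Omega}=e^{t\gamma}+e^{t\gamma}\big(e^{t\widehat K}-1\big)$ breaks the remainder in two: the $e^{t\gamma}$-piece evaluates, by the same contour deformation applied to $\widehat{f_0}$ alone, to $e^{t\gamma}(f_0(x)-e^{Ax})$, which vanishes on $\{x<0\}$; the $\big(e^{t\widehat K}-1\big)$-piece has integrand of modulus $O\!\big(te^{Ct}(1+|\xi|)^{-3/2}\big)$, obtained by combining $|e^{t\widehat K}-1|\le t|\widehat K|e^{t|\widehat K|}$ with the $|\xi|^{-1/2}$ bound of Proposition~\ref{ProphatK} and the $|\xi|^{-1}$ decay of $\widehat{f_0}$, hence it is absolutely integrable on $\Im\xi=a'$ with a bound $Cte^{Ct}$. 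Multiplying by $|e^{ix\xi}|=e^{-a' x}$ then yields \eqref{corassE1}.

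The estimate \eqref{corassE2} is obtained by the same strategy but shifting \emph{up}: for $x>0$ I shift the contour to $\Im\xi=a''\in(-A,5/2)$, staying below the double pole of $\widehat K$ at $\xi=5i/2$. No pole of $\widehat{f_0}$ is crossed, so no residue is picked up; the $e^{t\gamma}$-piece still evaluates to $e^{t\gamma}f_0(x)$ (which is $0$ on $\{x>0\}$, consistent with the identification of the subtracted main term in the statement), and the $\big(e^{t\widehat K}-1\big)$-piece is bounded by the same $(1+|\xi|)^{-3/2}$ argument. The symmetric estimates \eqref{corassE10}--\eqref{corassE20} follow by reflecting the roles of the two half-lines: for $x>0$ one shifts \emph{up} across the pole $\xi=-iB$, producing the main term $e^{t\Omega(-iB)}e^{Bx}$, and for $x<0$ one shifts \emph{down} without crossing any pole. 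The principal technical difficulty is the borderline integrability of $(1+|\xi|)^{-3/2}$, which forces use of the sharp $|\xi|^{-1/2}$ asymptotic of Proposition~\ref{ProphatK}; the crude $e^{Ct}$ factor arising from $|e^{t\widehat K}-1|$ is then absorbed into the constant $C$ uniformly on bounded time intervals, in line with Proposition~\ref{S4P1}.
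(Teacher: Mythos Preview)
Your approach is essentially the same as the paper's: compute $\widehat{f_0}$ explicitly as $(A-i\xi)^{-1}$ (up to a $\sqrt{2\pi}$ normalization), split $e^{t\Omega}=e^{t\gamma}\big(1+(e^{t\widehat K}-1)\big)$, and perform a contour deformation that either does or does not cross the simple pole at $\xi=-iA$ depending on the sign of $x$, bounding the remaining integral via the $|\xi|^{-1/2}$ decay of $\widehat K$ from Proposition~\ref{ProphatK}. The only difference is the order of operations: the paper first splits off the $e^{t\gamma}$-part along the original contour $\Im\xi=a>-A$ (obtaining $e^{t\gamma}f_0(x)$ directly) and then deforms only the absolutely convergent $(e^{t\widehat K}-1)$-integral, whereas you deform the full integral first and split afterwards. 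The paper's order is marginally cleaner because every contour integral it handles has integrand $O(|\xi|^{-3/2})$ and is absolutely convergent, while your initial shift involves the integrand $e^{t\Omega}\widehat{f_0}$, which is only $O(|\xi|^{-1})$; this is still rigorously justifiable (the contributions from the vertical segments vanish since their length is fixed and the integrand is $O(R^{-1})$), but it is a point you should make explicit. Your observation about the extra $e^{Ct}$ factor coming from $|e^{t\widehat K}-1|\le t|\widehat K|e^{t|\widehat K|}$ is in fact sharper than what the paper writes; the paper's displayed bound $|e^{t\widehat K}-1|\le Ct|\widehat K|$ tacitly relies on this same exponential factor being bounded on compact $t$-intervals.
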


\begin{proof}
Suppose first that $f_0(x)=e^{Ax}\1 _{ x<0 }$ with $A\in (-5/2, 1)$. Then, for all $\xi \in \CC$ such that $\Im m (\xi )>-A$,
\begin{align}
\widehat f_0(\xi )
=\frac {1} {\sqrt{2\pi }}
\int _{ -\infty }^0 e^{-ix\xi } e^{Ax} \dd\xi 
=\frac {1} {\sqrt{2\pi }(A-i\xi )} \label{eq:init data order for xi<0}
\end{align}
and then, if $\mathscr C_a=\{\xi \in \CC;\,\,\Im m (\xi )=a\}$ with $a\in (-A, 5/2)$, using the fact that the solution is given by the inverse Fourier transform of $\widehat f_0(\xi )e^{t\Omega (\xi)}$ and inserting \eqref{eq:init data order for xi<0}, we get 
\begin{align}
f(t, x)&=\frac {1} {2\pi }\int _{ \mathscr C _{ a } }\frac {e^{ix\xi }e^{t\Omega (\xi )}} {A-i\xi }\dd\xi=
\frac {1} {2\pi }\int _{ \mathscr C _{ a } }\frac {e^{ix\xi }e^{t(\gamma +\widehat K(\xi ))}} {A-i\xi } \dd\xi \nonumber\\
&=e^{t\gamma }e^{Ax}+\frac {e^{t\gamma }} {2\pi }\int _{ \mathscr C _{ a } }\left( e^{t\widehat K(\xi )}-1\right)\frac {e^{ix\xi }} {A-i\xi }\dd \xi. \label{corassE7}
\end{align}
A contour deformation argument yields then for $a'\in (-1, -A)$,
\begin{align*}
f(t, x)&=e^{t\gamma }e^{Ax}+e^{t\gamma }\left(e^{t\widehat K(-iA)}-1 \right)e^{Ax}+\frac {e^{t\gamma }} {2\pi }\int _{ \mathscr C _{ a' } }\left( e^{t\widehat K(\xi )}-1\right)\frac {e^{ix\xi }} {A-i\xi }\dd \xi
\end{align*}
from where we get,
\begin{align*}
\left\vert f(t, x)-e^{t\Omega (-iA)}e^{Ax} \right\vert 
\le Ce^{t\gamma }e^{-a'x}
\int_{\RR}\frac {\left|e^{t\widehat K(u+ia' )}-1\right| \dd u} {((A+a')^2+u^2)^{1/2}}
\end{align*}
with $A+a'<0$. It follows from (\ref{pgcd.e0}) in Proposition \ref{ProphatK},
\begin{align*}
\int_{\RR}\frac {\left\vert e^{t\widehat K(u+ia' )}-1\right\vert \dd u} {((A+a')^2+u^2)^{1/2}}
\le Ct \int_{\RR}\frac {\left (u^2+a'^2\right)^{-1/2} \dd u} {((A+a')^2+u^2)^{1/2}}
\end{align*}
and (\ref{corassE1}) follows. If now $x>0$, a contour deformation in \eqref{corassE7} gives for $a''\in (-A, 5/2)$,
\begin{align*}
f(t, x)-e^{t\gamma }e^{Ax}= \frac {e^{t\gamma }} {2\pi }\int _{ \mathscr C _{ a'' } }\left( e^{t\widehat K(\xi )}-1\right)\frac {e^{ix\xi }} {A-i\xi }\dd\xi 
\end{align*}
with,
\begin{align*}
\left| \int _{ \mathscr C _{ a'' } }\left( e^{t\widehat K(\xi )}-1\right)\frac {e^{ix\xi }} {A-i\xi }\dd\xi\right|\le 
Ct e^{-a''x} \int  _{\RR}\frac {\left (u^2+a''^2\right)^{-1/2}\dd u} {((A+a'')^2+u^2)^{1/2}}
\end{align*}
and  (\ref{corassE2}) follows.
Similarly, if $f_0(x)=e^{Bx}\1_{ x>0 }$ with $B \in (-5/2, 1)$ then for all $\xi \in \CC$ such that $\Im m(\xi )<-B$,
\begin{align*}
\widehat f_0(\xi )=\frac {1} {\sqrt{2\pi }}\int_0^\infty e^{-ix\xi } e^{Bx} \dd\xi =-\frac {1} {\sqrt{2\pi }(B-i\xi )}
\end{align*}
and if $b'' \in (-1, -B)$,
\begin{align*}
f(t, x)&=-\frac {1} {2\pi }\int _{ \mathscr C _{ b^{''} } }\frac {e^{ix\xi }e^{t\Omega (\xi )}} {B-i\xi }\dd\xi=
-\frac {1} {2\pi }\int _{ \mathscr C _{ b^{''} } }\frac {e^{ix\xi }e^{t(\gamma +\widehat K(\xi ))}} {B-i\xi }\dd\xi \nonumber\\
&=e^{t\gamma }e^{Bx}-\frac {e^{t\gamma }} {2\pi }\int_{ \mathscr C _{ b^{''} } }
\left( e^{t\widehat K(\xi )}-1\right)\frac {e^{ix\xi }} {B-i\xi }\dd\xi.
\end{align*}
Then,
\begin{align*}
f(t, x)&-e^{t\gamma }e^{Bx}=-\frac {e^{t\gamma }} {2\pi }\int _{ \mathscr C _{ b^{''} } }
\left( e^{t\widehat K(\xi )}-1\right)\frac {e^{ix\xi }} {B-i\xi }\dd\xi
\end{align*}
and \eqref{corassE20} follows in the same way as \eqref{corassE2}. If $x>0$, then arguing as for \eqref{corassE1},
\begin{align*}
f(t, x)&=e^{t\gamma }e^{Bx}+
e^{t\gamma }\left(e^{t\widehat K(-iB)}-1 \right)e^{Bx} + \frac {e^{t\gamma }} {2\pi }\int _{ \mathscr C _{ b' } }\left( e^{t\widehat K(\xi )}-1\right)\frac {e^{ix\xi }} {B-i\xi }\dd\xi
\end{align*}
for any $b'\in (-B, 5/2)$ and (\ref{corassE10}) follows.										
\end{proof}
\begin{rem}
It is remarkable how different the properties of the map $S(t)$ obtained in Corollary \ref{corass} are with respect  to the similar map $T(t)$  for the linearization of equation (\ref{eq:NL WKEC10})  around the KZ solution $\omega ^{-7/6}$. As proved indeed  in \cite[Lemma 3.9]{EscMischVel08}, these  have the following regularizing effect at $-\infty$: for all initial data  (\ref{corassE00}) with $A\in \left(-3/2, -7/6\right]$, there is $\lambda (t)\in \RR$ such that  $T(t)f_0(x)\sim \lambda (t) e^{-7x/6}$ as $x\to -\infty$  . This is in contrast with Corollary \ref{corass}, where the  behavior $e^{Ax}$ of the initial as  $x\to -\infty$ is observed at all times.
\end{rem}

As proved in the next Proposition, for some solutions of equation \eqref{E8} the quantity \eqref{S1ETM} (wave action) is constant in time. This property follows from the fact that $W(1)=0$. In general, the zeros of the function $W$ correspond to the conserved quantities. As seen in the first section, Proposition \ref{prop: station solutions_zeros of W},  $W(2)=0$ makes $p^{-2}$ to be a stationary solution of \eqref{E1}. 

\begin{cor}
\label{S3Pconsl}
Suppose that $f_0 \in L^1(e^{rx})$ for some $r \in (-1, \frac{5}{2})$. Then,
\begin{align*}
\int_{\RR} f(t, x) e^{rx} \, \dd x = e^{\lambda_r t} \int_{\RR} f_0(x) e^{rx} \, \dd x,
\end{align*}
where $\lambda_r > 0$ if $r \in (-1, \frac{1}{2})$ or $r \in (1, \frac{5}{2})$, $\lambda_r < 0$ if $r \in (\frac{1}{2}, 1)$, and $\lambda_r = 0$ if $r = \frac{1}{2}$ or $r = 1$.
\end{cor}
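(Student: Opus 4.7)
The key observation is that the quantity $m(t):=\int_\RR f(t,x)e^{rx}\dd x$ is, up to the Fourier convention, the value of $\widehat{f}(t,\cdot)$ at the purely imaginary point $\xi=ir$, and from $\widehat{f}(t,\xi)=\widehat{f}_0(\xi)e^{t\Omega(\xi)}$ one expects $m(t)=m(0)e^{t\Omega(ir)}$, with $\Omega(ir)=2W(2r)$ by \eqref{Esol1BB}.

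\medskip

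The plan is to establish this identity directly from equation \eqref{E8}. First I verify that $m(t)$ is well-defined for all $t>0$: by Proposition \ref{S4P1} applied with $p=1$ and $\theta=r\in(-1,5/2)$ one has $H(t)\ast f_0\in L^1(e^{rx})$, so the representation $f(t,x)=e^{\gamma t}\bigl(f_0(x)+(H(t)\ast f_0)(x)\bigr)$ implies $f(t)\in L^1(e^{rx})$ for every $t>0$, with the norm bounded uniformly on bounded time intervals. This also shows that $m\in C([0,\infty))\cap C^1((0,\infty))$ via Corollary \ref{Cor2.2}.

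\medskip

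Next I would multiply \eqref{E8} by $e^{rx}$ and integrate in $x$. The linear part gives $\gamma m(t)$. For the convolution term, note that since $K\in L^1(e^{rx})$ for every $r\in(-1,5/2)$ (a consequence of the behavior of $K$ at $\pm\infty$ recalled in Section \ref{SKHW} and the Appendix), Fubini's theorem gives
\begin{align*}
\int_\RR (K\ast f(t))(x)\,e^{rx}\dd x
=\int_\RR f(t,y)e^{ry}\dd y\int_\RR K(z)e^{rz}\dd z
=\widehat{K}(ir)\,m(t),
\end{align*}
where $\widehat{K}(ir)=\int_\RR K(z)e^{rz}\dd z$ is finite by the integrability of $K$ against $e^{rx}$. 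Therefore $m$ solves the scalar ODE $m'(t)=\Omega(ir)m(t)$ with $\Omega(ir)=\gamma+\widehat{K}(ir)$, and hence
\begin{align*}
m(t)=e^{t\Omega(ir)}m(0),\qquad \lambda_r:=\Omega(ir).
\end{align*}
Using \eqref{Esol1BB} I identify $\lambda_r=2W(2r)$.

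\medskip

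It remains to determine the sign of $\lambda_r$ on $r\in(-1,5/2)$, i.e.\ of $W$ on $(-2,5)$. Since $W$ is analytic on this strip and $W(1)=W(2)=0$ by Proposition \ref{prop: station solutions_zeros of W}, $W$ has constant sign on each of the open intervals $(-2,1)$, $(1,2)$, $(2,5)$. The required signs ($W>0$ on $(-2,1)\cup(2,5)$ and $W<0$ on $(1,2)$) follow from the explicit description of $W$ on the real axis given in the Appendix \ref{Sapp} and in \cite{MM}; this translates directly into the sign pattern stated for $\lambda_r$ in terms of $r=s/2$. The main obstacle is not the differential computation (which is routine once the integrability against $e^{rx}$ is secured by Proposition \ref{S4P1}) but the sign analysis of $W$ on $(-2,5)$, which has to be imported from the finer information on $W_{\mathcal V}$ established elsewhere.
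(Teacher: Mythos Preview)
Your proposal is correct and follows essentially the same approach as the paper. The only cosmetic difference is that the paper evaluates the Fourier representation directly, writing $\int_\RR f(t,x)e^{rx}\dd x=\widehat f(t,ir)=\widehat f_0(ir)e^{(\gamma+\widehat K(ir))t}$, whereas you derive and solve the scalar ODE $m'(t)=\Omega(ir)m(t)$; both routes rest on Proposition~\ref{S4P1} for integrability and on the properties of $W$ from \cite{MM} for the sign of $\lambda_r$. One small caveat: your argument that $W$ has constant sign on each subinterval requires knowing that $s=1$ and $s=2$ are the \emph{only} real zeros of $W$ on $(-2,5)$, which analyticity alone does not give---but you correctly defer this to the explicit information in the Appendix and \cite{MM}, just as the paper does.
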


\begin{proof}
For all $r\in (-1, 5/2)$ and $f_0\in L^1(e^{rx})$, by  Proposition \ref{S4P1}, $f(t)\in L^1(e^{r x})$. Then we write
\begin{align*}
\int_{ \RR }f(t, x)e^{rx}\dd x
&=\widehat f(t, ir)
=\widehat f_0(i r)\widehat G(t, ir)
=\widehat f_0(i r)e^{(\gamma +\widehat K(ir))t}\\
&=\widehat f_0(i r)e^{(\gamma +W(2r))t}=\int _{\RR }f_0(x)e^{rx}\dd x\, e^{(\gamma +W(2r))t}.
\end{align*}
The result follows with $\lambda _r=\gamma +W(2r)$ and the properties of the function $W$.
\end{proof}

\subsection{Long time behavior of some solutions to (\ref{E8}).}
\label{Slongtime}
Our next result shows that for some initial data at last,  a concentration of waves towards zero frequency. Unlike in the non linear equation, this process  takes infinite time, at an exponential rate that is easy to obtain explicitly. More precisely,
 
\begin{theo}
\label{S4P4.1}
Suppose that $f_0\in X _{ A, B }$, where $A$ and $B$ satisfy  (\ref{EAB}).
Then, for $\sigma >\frac {1} {2}$ close to $1/2$,
\begin{align}
\int  _{ -\infty }^Lf(t, x)e^{\frac {x} {2}}\dd x=
\int  _{ \RR}f(t, x)e^{\frac {x} {2}}\dd x+\int  _{ \Im m\xi =\sigma  }\widehat f_0(\xi )e^{(\gamma +\widehat K(\xi ))t}
\frac {e^{L\left(i\xi +\frac {1} {2} \right)}} {i\xi +\frac {1} {2}} \dd \xi, \label{EABC}
\end{align}
where all the integrals are absolutely convergent and for $t\to \infty$,
\begin{align}
\label{S4T1E1}
\left\vert \int  _{ \Im m\xi =\sigma  }\widehat f_0(\xi )e^{(\gamma +\widehat K(\xi ))t}\frac {e^{L\left(i\xi +\frac {1} {2} \right)}} {i\xi +\frac {1} {2}} \dd \xi \right\vert \le 
Ce^{-\nu t} \int  _{ \RR }|f_0(x)|e^{\frac {3x} {4}}\dd x
\end{align}
where $-\nu =(\gamma +\widehat K(3i/4))<0$.
\end{theo}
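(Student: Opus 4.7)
The plan is to represent both sides of \eqref{EABC} via the inverse Fourier transform and identify the difference $-\int_L^\infty f(t,x)e^{x/2}\dd x$ with a single contour integral obtained through Fubini and contour deformation. First, absolute convergence of all the real-line integrals is immediate from Corollary \ref{cor4.5}: under \eqref{EAB}, $|f(t,x)|\lesssim e^{Ax}$ for $x<0$ and $|f(t,x)|\lesssim e^{Bx}$ for $x>0$ with $A>1/2$ and $B<-3/4$, so $f(t,\cdot)\,e^{x/2}\in L^1(\RR)$ and is in particular integrable on $(L,\infty)$.

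For the identity I would start from $f(t,x)=\tfrac{1}{2\pi}\int_\RR e^{ix\xi}\widehat f_0(\xi)e^{t\Omega(\xi)}\dd\xi$. Under \eqref{EAB} the function $\widehat f_0$ is analytic on $\Im m\,\xi\in(-A,-B)\supset(-1/2,3/4)$, while $\widehat K$ is analytic on $(-1,5/2)$ by Proposition \ref{ProphatK}, so the contour can be shifted from $\Im m\,\xi=0$ to $\Im m\,\xi=\sigma$ for any $\sigma\in(1/2,3/4)$; the vanishing of the vertical pieces as $\Re e\,\xi\to\pm\infty$ is ensured by $|\widehat K(\xi)|\lesssim(1+|\xi|)^{-1/2}$ (Proposition \ref{ProphatK}) and the Riemann--Lebesgue decay of $\widehat f_0$. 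On this shifted line $\Re e(i\xi+\tfrac12)=\tfrac12-\sigma<0$, hence $\int_L^\infty e^{x(i\xi+1/2)}\dd x=-e^{L(i\xi+1/2)}/(i\xi+1/2)$ converges absolutely. Fubini is then justified by Cauchy--Schwarz: under \eqref{EAB} the tilted function $f_0 e^{\sigma x}$ belongs to $L^2(\RR)$, so Plancherel for the vertically shifted Fourier transform gives $\widehat f_0(\cdot+i\sigma)\in L^2(\RR)$; combined with $e^{t\Omega(\cdot+i\sigma)}\in L^\infty$ and $1/(i\xi+1/2)\in L^2$ on that line, the double integral is absolutely convergent. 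This identifies $-\int_L^\infty f(t,x)e^{x/2}\dd x$ with the contour integral in \eqref{EABC}.

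For the exponential decay \eqref{S4T1E1} I would further deform the contour from $\Im m\,\xi=\sigma$ up to $\Im m\,\xi=3/4$; this is permissible because $\widehat f_0$ and $\widehat K$ stay analytic in the intermediate strip (since $-B>3/4$), and the only candidate singularity of $1/(i\xi+1/2)$ is at $\xi=i/2$, which is below $\sigma$ and therefore not crossed. On $\Im m\,\xi=3/4$ the decisive input is the positivity $K\ge 0$ (already used in Corollary \ref{S3C2}): for every real $u$,
\begin{equation*}
\Re e\,\widehat K(u+\tfrac{3i}{4})=\int_\RR K(x)e^{3x/4}\cos(ux)\dd x\le \int_\RR K(x)e^{3x/4}\dd x=\widehat K(\tfrac{3i}{4}),
\end{equation*}
so $|e^{t\Omega(u+3i/4)}|\le e^{t(\gamma+\widehat K(3i/4))}=e^{-\nu t}$ uniformly in $u$. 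Using $|\widehat f_0(u+3i/4)|\le\int_\RR|f_0(x)|e^{3x/4}\dd x$ and $|e^{L(i\xi+1/2)}|=e^{-L/4}$ on this contour, the modulus of the contour integral is at most
\begin{equation*}
e^{-L/4}\,e^{-\nu t}\int_\RR\frac{|\widehat f_0(u+3i/4)|}{|iu-\tfrac14|}\dd u,
\end{equation*}
and the remaining $u$-integral is finite by Cauchy--Schwarz, using $\widehat f_0(\cdot+3i/4)\in L^2(\RR)$ (Plancherel, since $f_0 e^{3x/4}\in L^2$ under \eqref{EAB}) and $1/|iu-1/4|\in L^2(du)$. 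Interpolating against the a priori bound $\|f_0 e^{3x/4}\|_{L^\infty}\le\|f_0\|_{X_{A,B}}<\infty$ then rewrites the estimate purely in terms of $\int_\RR|f_0(x)|e^{3x/4}\dd x$, with the $L$-dependence and all numerical constants absorbed into $C$.

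The main obstacle is the rigorous justification of Fubini on the shifted contour: the integrand decays only like $|u|^{-1}$ at infinity along horizontal lines, which is exactly at the borderline of $L^1(du)$ and forces the Cauchy--Schwarz combination of a Plancherel-based $L^2$ bound on the tilted Fourier transform with the uniform pointwise bound $|e^{t\Omega}|\le e^{-\nu t}$ (itself a consequence of $K\ge 0$). Once that integrability issue is settled, the contour deformations and the evaluation of the elementary $x$-integral are routine.
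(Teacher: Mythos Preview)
Your approach is correct and in several respects cleaner than the paper's, but it differs in two essential places.

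\textbf{The identity \eqref{EABC}.} The paper works on the \emph{real} contour: it writes $\int_{-\infty}^L f(t,x)e^{x/2}\dd x$ via the truncated $L^2$ Fourier inversion, applies Fubini on $[-R,R]$, lets $R\to\infty$, and only then deforms the $\xi$-contour upward from $\Im m\,\xi=0$ to $\Im m\,\xi=\sigma>1/2$. In doing so it crosses the simple pole of $1/(i\xi+\tfrac12)$ at $\xi=i/2$; the residue, using $\gamma+\widehat K(i/2)=0$ (equivalently $W(1)=0$), gives exactly $\int_\RR f_0 e^{x/2}\dd x=\int_\RR f(t)e^{x/2}\dd x$. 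You instead shift the contour \emph{first} to $\Im m\,\xi=\sigma$ and then integrate $e^{x(i\xi+1/2)}$ over $(L,\infty)$, so that the pole is never crossed and the conserved mass enters only through the trivial decomposition $\int_{-\infty}^L=\int_\RR-\int_L^\infty$. Both routes are valid; yours avoids the residue bookkeeping. One technical point: your ``Fubini by Cauchy--Schwarz'' is really a Plancherel argument (pairing $f(t,x)e^{\sigma x}\in L^2$ against $e^{(1/2-\sigma)x}\1_{(L,\infty)}\in L^2$), since the double integral is \emph{not} absolutely convergent in the product sense ($\widehat f_0(\cdot+i\sigma)$ is only in $L^2$, not $L^1$). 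The paper handles this more carefully via the truncated-limit $L^2$ inversion.

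\textbf{The decay \eqref{S4T1E1}.} The paper runs a full steepest-descent argument at the critical point $\xi=3i/4$: it uses that $\Omega(u+3i/4)$ is real (a consequence of the symmetry $W(s)=W(3-s)$), verifies via Mathematica/Figure~1 that it attains its maximum at $u=0$, and extracts the asymptotic $\sim t^{-1/2}e^{-\nu t}\widehat f_0(3i/4)$. Your route is more elementary and robust: positivity of $K$ immediately gives $\Re e\,\widehat K(u+3i/4)\le\widehat K(3i/4)$ for all $u$, hence the uniform bound $|e^{t\Omega(u+3i/4)}|\le e^{-\nu t}$ without any saddle-point analysis or numerical check. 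This is a genuine simplification. The trade-off is that your final constant naturally comes out as $\|f_0 e^{3x/4}\|_{L^2}$ (via Cauchy--Schwarz against $1/|iu-1/4|\in L^2$), not the $L^1$ quantity $\int|f_0|e^{3x/4}$ in the statement; your interpolation only yields the square root of the $L^1$ norm times an $X_{A,B}$-dependent factor. The paper has the same issue in its bound on $I_1$ and only recovers the pure $L^1$ form asymptotically through the $I_2$ term, so this discrepancy is cosmetic rather than a gap.
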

\begin{proof}
Since $f_0\in X _{ A, B }$ with $-5/2<A, B<1$,   $f(t) \in X _{ A, B }$ with $-5/2<A, B<1$ for all $t>0$ and the function $g(t)$ defined as $g(t, x)=f(t, x)e^{x/2}$ belongs to  $ L^1(\RR)$ for all $t>0$.  Since $f_0\in L^2$ then $f(t)\in L^2(\RR)$,
$$
f(t, x)=\int_{ \RR }e^{ix\xi }\widehat f(t, \xi )\dd\xi=\lim _{ R\to \infty }\int  _{ \RR }e^{ix\xi }\widehat f(t, \xi )\dd\xi ,\,\,\text{in}\,\,L^2(\RR),
$$
 and using that the function $x\to e^{\frac {x} {2}}$ belongs to  $L^2(-\infty, L)$ for all finite $L\in \RR$:
\begin{align*}
\int_{ -\infty }^Lf(t, x)e^{\frac {x} {2}} \dd x &=\frac {1} {\sqrt{2\pi }}\int  _{ -\infty }^L \int  _{ \RR }e^{ix\xi }\widehat f_0(\xi )e^{(\gamma +\widehat K(\xi ))t}d\xi e^{\frac {x} {2}}\dd x\\
&=\lim _{ R\to \infty }\frac {1} {\sqrt{2\pi }}\int_{ -\infty }^L \int  _{-R}^R e^{ix\xi }\widehat f_0(\xi )e^{(\gamma +\widehat K(\xi ))t}\dd\xi e^{\frac {x} {2}}\dd x.
\end{align*}
Fubini Theorem then gives,
\begin{align*}
\int  _{ -\infty }^Lf(t, x)e^{\frac {x} {2}}\dd x&
=\lim _{ R\to \infty  }\frac {1} {\sqrt{2\pi }}\int  _{ -R }^R \widehat f_0(\xi )e^{(\gamma  +\widehat K(\xi ))t} 
\int  _{ -\infty }^L e^{ix\xi } e^{\frac {x} {2}}\dd x \dd\xi\\
&=\lim _{ R\to \infty }
\frac {1} {\sqrt{2\pi }}\int  _{-R}^R\widehat f_0(\xi )e^{(\gamma +\widehat K(\xi ))t}\frac {e^{L\left(i\xi +\frac {1} {2} \right)}} {i\xi +\frac {1} {2}} \dd \xi. 
\end{align*}
Since $\widehat f_0 $ and the function $\xi \to (i\xi +\frac {1} {2})^{-1}$ belong to $L^2(\RR)$,
\begin{align}
\label{S4E10}
\lim _{ R\to \infty }\frac {1} {\sqrt{2\pi }}\int  _{-R}^R\widehat f_0(\xi )e^{(\gamma + \widehat K(\xi ))t}\frac {e^{L\left(i\xi +\frac {1} {2} \right)}} {i\xi +\frac {1} {2}} \dd \xi=\frac {1} {\sqrt{2\pi }}\int  _{\RR}\widehat f_0(\xi )e^{(\gamma +\widehat K(\xi ))t}\frac {e^{L\left(i\xi +\frac {1} {2} \right)}} {i\xi +\frac {1} {2}} \dd \xi,
\end{align}
where the integral in the right hand side of \eqref{S4E10} is absolutely convergent. 

Since $f(t)$ satisfies  \eqref{EAB} it follows that $\widehat f(t)$ is analytic on the strip $\Im m(\xi )\in (B, A)$ and then the function under the integral sign at the right hand side of (\ref{S4E10}) is analytic in the strip $\Im m(\xi )\in (B , \frac {1} {2})$ and has a pole at $\xi =i/2$.
Moreover, for all $\xi =u+i\sigma $ such that $u\in \RR$,  $\sigma \in  (-\varepsilon , \frac {1} {2}+\varepsilon )$ with $\varepsilon >0$ small,
\begin{align*}
\left|\widehat f_0(\xi )e^{(\gamma +\widehat K(\xi ))t}\frac {e^{L(i\xi +\frac {1} {2})}} {i\xi +\frac {1} {2}}\right|
& \le Ce^{C't} |\widehat f_0(u+i\sigma )|
\frac {e^{-L\sigma }} {|-\sigma +\frac {1} {2}+iu|}\\
&\le Ce^{C't} |\widehat g_0(u)|\frac {e^{-L\sigma }} {|-\sigma +\frac {1} {2}+iu|}, 
\end{align*}
where $g_0(u)=e^{\sigma u}f_0(u)$ is such that,
\begin{align*}
\int  _{ \RR }|g_0(u)|\dd u&=\int  _{ \RR }|f_0(u)|e^{\sigma u}\dd u\\
&\le C \int_{ -\infty }^0 e^{(A+\sigma )u}\dd u+C\int _0^\infty e^{(B+\sigma)u }\dd u<\infty
\end{align*}
since $A+\sigma >A-\varepsilon >0$ and $B+\sigma < B+\frac {1}{2}+\varepsilon <0$. Therefore,
\begin{align*}
\left|\widehat f_0(\xi )e^{(\gamma +\widehat K(\xi ))t}\frac {e^{L(i\xi +\frac {1} {2})}} {i\xi +\frac {1} {2}}\right|
\le  Ce^{C't} \| g_0\|_ 1|\frac {e^{-L\sigma }} {|-\sigma +\frac {1} {2}+iu|}\to 0,\,\,|u|\to \infty.
\end{align*}
Classical deformation of integration contour gives then,
\begin{align*}
\int_{\RR}\widehat f_0(\xi )e^{(\gamma +\widehat K(\xi ))t}\frac {e^{L\left(i\xi +\frac {1} {2} \right)}} {i\xi +\frac {1} {2}} \dd \xi&=
(-i) 2\pi  i \widehat f_0(i/2)e^{\left(\gamma +\widehat K(i/2) \right)t}\\
& \hspace{1cm}+\int  _{ \Im m(\xi )=\sigma  }\widehat f_0(\xi )e^{(\gamma +\widehat K(\xi ))t}\frac {e^{L\left(i\xi +\frac {1} {2} \right)}} {i\xi +\frac {1} {2}} \dd \xi.
\end{align*}
Since $\gamma +\widehat K(i/2)=0$,
\begin{align}
\label{S4ECD1}
\int  _{ -\infty }^Lf(t, x)e^{\frac {x} {2}}dx= \sqrt {2\pi }\widehat f_0(i/2)+
\frac {1} {\sqrt{2\pi }}\int  _{ \Im m\xi =\sigma  }\widehat f_0(\xi )e^{(\gamma +\widehat K(\xi ))t}\frac {e^{L\left(i\xi +\frac {1} {2} \right)}} {i\xi +\frac {1} {2}} \dd\xi 
\end{align}
and \eqref{EABC} follows.

The function under the integral sign at the right hand side of \eqref{S4ECD1} is now analytic on the domain $\Im m(\xi )\in (i/2, 5i/2)$. Moreover, 
 since  $f_0\in X _{ A, B }$, $-5/2<A, B<1$, the function $h_\sigma $ defined as  $h_\sigma (x)=e^{-\sigma x}f_0(x)$ is such that $h_\sigma \in L^2(\RR)$,  and then $\hat h_\sigma \in L^2(\RR)$, for all $\sigma \in (B, A)$. Since for all $u\in \RR, \sigma \in \RR$,
$$ 
\mathscr F\left( h_\sigma \right)(u)=\frac {1} {\sqrt{2\pi }}\int  _{ \RR }f_0(x)e^{-\sigma x}e^{ixu}\dd x=\frac {1} {\sqrt{2\pi }}\int_{ \RR }f_0(x)e^{ix(u+i\sigma )}\dd x=\hat f_0(u+i\sigma )
$$
it follows, for all $\sigma \in (1/2, A)$,
\begin{align*}
\int_{ \Im m\xi =\sigma  }\left|\widehat f_0(\xi )e^{(\gamma +\widehat K(\xi ))t}\frac {e^{L\left(i\xi +\frac {1} {2} \right)}} {i\xi +\frac {1} {2}}\right| \dd |\xi|
\le \int  _{ \RR }|\hat h_\sigma (u)|\frac {\dd u} {|iu-\sigma +\frac {1} {2}|}\le C \| h_\sigma \|_2
\end{align*}
and the integral 
\begin{align*}
\frac {1} {\sqrt{2\pi }}\int  _{ \Im m\xi =\sigma  }\widehat f_0(\xi )e^{(\gamma +\widehat K(\xi ))t}\frac {e^{L\left(i\xi +\frac {1} {2} \right)}} {i\xi +\frac {1} {2}} \dd \xi
\end{align*}
is absolutely convergent for all $\sigma \in (1/2, A)$.

The same argument shows that the contour deformation argument may still be applied to the second integral at the right hand side of (\ref{S4ECD1}) as long as the contour does not leave the domain where $\Im m(\xi )\in (1/2, A)$.

The critical point of $\Omega (\xi )=(\gamma +\widehat K(\xi ))$ is $\xi =3i/4$. Since $\Omega $ is analytic on the strip $\Im m\xi \in (-1, 5/2)$,
\begin{align*}
\Omega (\xi )=\Omega (3i/4)+\frac {(\xi -3i/4)^2} {2}\Omega ''(3i/4)+\mathcal O\left(\xi -3i/4 \right)^3,\,\,\xi \to 3i/4,
\end{align*}
where, use of Mathematica gives $\Omega ''(3i/4)=-4W''(3/2)<0$. Since the function $\Omega $ is analytic in the strip $\Im m\xi \in (-1, 5/2)$ the contour of integration $\Im m(\xi )=\sigma $ may now be deformed to the line
\begin{align*}
\mathcal C'=\left\{\xi = u+\frac {3i} {4}, u\in \RR \right\}
\end{align*}
 along which the following hold:
 
 \begin{itemize}
    \item[(i)] $\Omega(\xi) = \Omega(3i/4) + \frac{1}{2}(\xi - 3i/4)^2 \Omega''(3i/4) + \mathcal{O}((\xi - 3i/4)^3),\ \xi \to 3i/4$
    \item[(ii)] $\Omega(u + 3i/4) \in \RR$ for all $u \in \RR,$ and 
    \begin{align*}
        \forall \rho > 0,\, \exists \mu > 0;\  \Omega(u + 3i/4) \le \Omega(3i/4) - \mu,\,\, \forall |u| \ge \rho \,\, \text{(cf.\ Figure 1)}.
    \end{align*}
\end{itemize}

\begin{figure}[]
\centering
\includegraphics[width=0.7\textwidth,height=0.3\textheight]{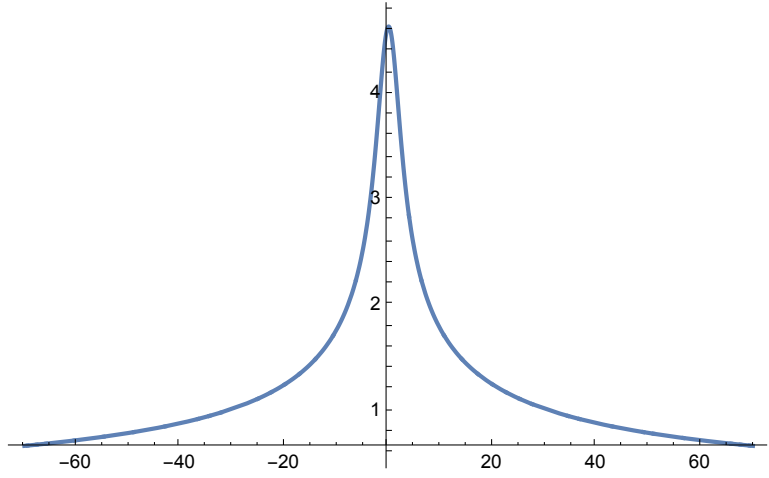}
\caption{The function $W\left(\frac {3} {2}+iv\right)$ for $v\in (-70, 70)$.}
\end{figure}
Then from (i), for $R_0$ small enough
\begin{align*}
\Omega (\xi )=\Omega (3i/4)+\frac {u^2} {2}\Omega ''(3i/4)+\mathcal O(u)^3,\,\forall \xi \in \mathcal C',\, |u|\le R_0.
\end{align*}
while from (ii), there exists $\mu >0$ such that
\begin{align*}
 \Omega (\xi )\le \Omega (3i/4)-\mu,\,\,\forall \xi \in \mathcal C',\,\,|u|>R_0.
\end{align*}
The curve $\mathcal C'$ may then be decomposed into $\mathcal C'=\mathcal C_1'+\mathcal C_2'$, where 
\begin{align*}
\mathcal C'_1=\{\xi \in \mathcal C';\,|u|>R_0\}, \quad \mathcal C'_2=\{\xi \in \mathcal C';\,|u|<R_0\}, 
\end{align*}
so that 
\begin{align*}
\int_{ \mathcal C' }\widehat f_0(\xi )e^{ t\Omega (\xi )}
\frac {e^{L\left(i\xi +\frac {1} {2} \right)}} {i\xi +\frac {1} {2}} \dd\xi
 &=\int_{ \mathcal C'_1}\widehat f_0(\xi )e^{t\Omega (\xi )}\frac {e^{L\left(i\xi +\frac {1} {2} \right)}} {i\xi +\frac {1} {2}} \dd\xi +\int_{\mathcal C'_2}\widehat f_0(\xi )e^{t\Omega (\xi )}\frac {e^{L\left(i\xi +\frac {1} {2} \right)}} {i\xi +\frac {1} {2}} \dd\xi
 \\&=: I_1+I_2.
\end{align*}
In the first integral, $ I_1$
\begin{align*}
\left|e^{t\Omega (\xi )}\frac {e^{L\left(i\xi +\frac {1} {2} \right)}} {i\xi +\frac {1} {2}} \right|\le \frac {Ce^{(\Omega (3i/4)-\mu )t}} {\left|i\xi +\frac {1} {2}\right|}
\end{align*}
using the conclusion from point (ii). In total, 
\begin{align*}
\left|I_1\right|\le Ce^{(\Omega (3i/4)-\mu )t} \int  _{ \mathcal C'}\left|\hat f_0(\xi )\right|\frac {d\xi } {\left|i\xi +\frac {1} {2}\right|}
\le Ce^{(\Omega (3i/4)-\mu )t} \| h_\sigma \|_2.
\end{align*}
after C-S in the last inequality.
In the second integral, for $R_0$ small,
\begin{align*}
I_2=e^{t\Omega (3i/4)+\mathcal O(R_0)^3}\int  _{|u|<R_0}\widehat f_0(u+3i/4 )e^{  \frac {t} {2} u^2 \Omega ''(3i/4)}\frac {e^{L\left( -\frac {1} {4}+iu \right)}} { -\frac {1} {4}+iu} \dd u, 
\end{align*}
we perform the change of variables, $u\sqrt {-t\Omega ''(3i/4)}=2z$ which gives,
\begin{align*}
&\int  _{|u|<R_0}\widehat f_0(u+3i/4 )e^{  \frac {t} {2} u^2
 \Omega ''(3i/4)}\frac {e^{L\left( -\frac {1} {4}+iu \right)}} { -\frac {1} {4}+iu} \dd u=\\
&\frac {2} {\sqrt {-t\Omega ''(3i/4)}}\int\limits _{2|z|\le R_0\sqrt {-t\Omega ''(3i/4)}}\widehat f_0\left(\frac {2z} {\sqrt {-t\Omega ''(3i/4)}}+\frac {3i} {4} \right)e^{-z^2}
j(z)\dd z
\end{align*}
where 
\begin{align*}
j(z)&=\left(\frac {2iz} {\sqrt {-t\Omega ''(3i/4)}}-\frac {1} {4} \right)^{-1}\exp\left(\frac {2iLz} {\sqrt {-t\Omega ''(3i/4)}}-\frac {L} {4} \right)\\
&=-4e^{-\frac {L} {4}}\left(1+\mathcal O\left( R_0\right) \right),\, R_0\to 0.
\end{align*}
By continuity of $f_0$,
\begin{align*}
\widehat f_0\left(\frac {2z} {\sqrt {-t\Omega ''(3i/4)}}+\frac {3i} {4} \right)=\widehat f_0\left(\frac {3i} {4}\right)(1+o(1)),\,\,t\to \infty
\end{align*}
and then, as $t\to \infty$ and $R_0\to 0$
\begin{align*}
I_2=\frac {-2e^{t\Omega (3i/4)}} {\sqrt {-t\Omega ''(3i/4)}}\left(1+\mathcal O(R_0)^3 \right) \widehat f_0\left(\frac {3i} {4}\right)(1+o(1))4e^{-\frac {L} {4}}\left(1+\mathcal O\left( R_0\right) \right)
\end{align*}
from where (\ref{S4T1E1}) follows.
\end{proof}

The second and final result of this section pertains to initial data $f_0$ whose Fourier transform exhibits good properties of analyticity and integrability. Under these conditions, the pointwise behaviour of $f(t, x)$ as $|x|/t\to \infty$ and $|x|/t\to 0$ may be described. It is then possible to observe the growth or decay of the solution for $x\lessgtr (4/3)\Omega (3i/4)t$, where $\Omega (3i/4)\approx -0.1572 $.
\begin{prop}
\label{S4P2}
Suppose that $f_0$ is such that $\hat f_0\in X _{ A, B }$ with $-5/2<A, B<1$ and the function $g_0$ defined as $g_0(x)=f_0(x)e^{\frac {x s_*} {2}}$ for $x\in \RR$ satisfies  $\widehat g_0 \in L^1(\RR)$.
Then, 
\begin{align}
f(t, x)\sim
\begin{cases}
\displaystyle{\frac {\widehat f_0(-i)} {\sqrt{3\pi }}\frac {e^{x}t^{\frac {1} {6}}} {|x|^{\frac {2} {3}}}e^{3t^{\frac {1} {3}}|x|^{\frac {2} {3}}},\,\frac {x} {t}\to -\infty,}\\
\displaystyle{\frac {\widehat f_0(5i/2)} {\sqrt{3\pi }}\frac {e^{-\frac {5x} {2}}t^{\frac {1} {6}}} {|x|^{\frac {2} {3}}}e^{3t^{\frac {1} {3}}|x|^{\frac {2} {3}}},\,\frac {x} {t} \to \infty.}
\end{cases}
\end{align}
and,
\begin{align}
f(t, x)\sim
 \frac {\widehat f_0(3i/4)e^{-3x/4}e^{\Omega (3i/4)t}} {\sqrt{2\pi \Omega ''(3i/4)t}},\,\,\frac {|x|} {t}\to 0.
 \end{align}
\end{prop}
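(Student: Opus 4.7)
The plan is to apply the classical method of steepest descent to the inversion formula
\begin{align*}
f(t,x) = \frac{1}{2\pi}\int_\RR \widehat f_0(\xi)\, e^{ix\xi + t\Omega(\xi)}\,\dd\xi,
\end{align*}
using that $\Omega = \gamma + \widehat K$ is analytic in the strip $\Im m\,\xi \in (-1, 5/2)$ with double poles at $\xi = -i$ and $\xi = 5i/2$, and that the hypothesis $\widehat g_0 \in L^1(\RR)$ provides integrability of $\widehat f_0$ on a horizontal line inside the strip so the required contour deformations are legitimate. The three stated asymptotic regimes correspond to three locations of the saddle point $\xi_*$ defined by $\Omega'(\xi_*) = -ix/t$: as $|x|/t \to 0$ the saddle is the interior critical point of $\Omega$, while as $x/t \to \mp\infty$ it is pushed toward the pole $\xi = -i$, resp.\ $\xi = 5i/2$.

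For the regime $|x|/t \to 0$, I would deform the contour to the line $\Im m\,\xi = 3/4$ passing through $\xi_* = 3i/4$, already identified in the proof of Theorem \ref{S4P4.1}. Inserting the local expansion
\begin{align*}
\Omega(\xi) = \Omega(3i/4) + \tfrac{1}{2}\,\Omega''(3i/4)\,(\xi - 3i/4)^2 + O(|\xi-3i/4|^3),
\end{align*}
and using the coercive bound $\Omega(u + 3i/4) \le \Omega(3i/4) - \mu$ for $|u| \ge \rho$ (from the proof of Theorem \ref{S4P4.1}) to control the tail, the integral reduces to a Gaussian with amplitude $\widehat f_0(3i/4)$ and contour-shift factor $e^{-3x/4}$, producing the stated $(2\pi\,\Omega''(3i/4)\,t)^{-1/2}$ prefactor.

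For $x/t \to -\infty$, I would use the local expansion $\widehat K(\xi) \sim c(1-i\xi)^{-2}$ near $\xi = -i$ from Subsection \ref{SKHW} and the substitution $\xi = -i + iu$. The saddle equation then becomes $u^3 \sim \kappa\, t/|x|$, so $u_* = (\kappa\, t/|x|)^{1/3}$ with the real positive cube root (the one through which the contour $\Im m\,\xi = 0$ can be deformed without crossing the pole). The phase evaluated at $\xi_* = -i + iu_*$ produces the factor $e^{x}$ (from $ix\xi_*|_{\xi_*=-i}$) together with the subleading contribution $e^{3 t^{1/3}|x|^{2/3}}$ from the balance between $t/u_*^2$ and $|x|u_*$; the rescaled quadratic expansion of $\Omega$ around $\xi_*$ gives the $t^{1/6}/|x|^{2/3}$ Gaussian prefactor; and the slowly varying amplitude is replaced by its boundary value $\widehat f_0(-i)$, finite because $\widehat f_0 \in X_{A,B}$ with $A<1$. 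The pole at $\xi = 5i/2$ is handled symmetrically to obtain the case $x/t \to +\infty$.

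The main difficulty I expect lies in the two boundary regimes: one must (i) select the correct branch of the cube root so that the deformed contour joins $\pm\infty$ through $\xi_*$ without enclosing the singularity, (ii) verify that the subleading $1/(1-i\xi)$ term of $\widehat K$ contributes only lower-order corrections to the phase (and similarly at $5i/2$), and (iii) justify replacing $\widehat f_0(\xi)$ by its boundary value uniformly on the steepest-descent path, using the decay built into the $X_{A,B}$ norm to dominate the relevant neighbourhood of the saddle as $|x|/t \to \infty$.
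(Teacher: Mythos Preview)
Your approach is correct and coincides with the paper's: both apply the steepest descent method to the inversion integral, locating the saddle $\xi_*$ (equivalently $s_*$) from $\Omega'(\xi_*)=-ix/t$, treating the interior saddle $3i/4$ for $|x|/t\to 0$ and the saddles drifting toward the double poles at $-i$ and $5i/2$ for $x/t\to\mp\infty$. The only cosmetic difference is that the paper carries out the computation in the Mellin variable $s=-2i\xi$ (so that the saddle equation reads $W'(s_*)=-x/(2t)$ and the pole expansions (\ref{ED2W})--(\ref{EDpp3W}) can be quoted directly), remarking explicitly that the argument could equally be written in the Fourier variable $\xi$ as you do.
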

\begin{proof}
As in the Theorem \ref{S4P4.1}, since $f_0\in X _{ A, B }$ with $-5/2<A, B<1$,   $f(t) \in X _{ A, B }$ with $-5/2<A, B<1$ for all $t>0$.The proof follows now from steepest descent method applied to the integral
\begin{align*}
f(t, x)=\frac {1} {\sqrt{2\pi }}\int  _{ \RR }e^{ix\xi +(\gamma +\widehat K(\xi ))t}\widehat f_0(\xi )\dd\xi.
\end{align*}
However, the proof  is simpler to write in the variables $A, p$, using then the  Mellin transform and its inverse.
Write instead (notice the difference  of $p=e^{x/2},\, x=2\log p$ with the notation in \cite{MM}),
\begin{align*}
f(t, x)=\frac {1} {2\pi  i}\int  _{ \sigma -i\infty} ^{\sigma +i\infty} \Phi(s) e^{-\frac {x s} {2}+tW(s)}\dd s
\end{align*}
where $\Phi (s)$ is the Mellin transform of the initial data $A_0$. Notice first that the unique critical point $s_*$  of the function 
$\psi (s; t, x)=-\frac {x s} {2}+tW(s)$ is such that,
\begin{align*}
W'(s_*)=-\frac {x} {2t}.
\end{align*}
Since $W$ is analytic for  $\Re e(s)\in (-2, 5)$ with only two poles at $s=-2, s=5$ in $\Re e(s)\in [-2, 5]$ and satisfies,
\begin{align*}
W'(s)&= -\frac {8} {(s+2)^3}-\frac {8} {3(s+2)^2}+\mathcal O(1),\,\,s\in (-2, 5),\, s\to -2\\
W'(s)&=-\frac {8} {(s-5)^3}+\frac {8} {3(s-5)^2}+\mathcal O(1),\,\,s\in (-2, 5),\,\, s\to 5
\end{align*}
two cases may  arise as $xt^{-1}\to \pm \infty$. Suppose first $x\to -\infty$, and then $W'(s_*)\to \infty$.  If $\Re e(s_*)\in (-2, 5)$, then
\begin{align*}
-\frac {x} {2t}=W'(s_*)=-\frac {8} {(s_*+2)^3}+\mathcal O\left(s_*+2 \right)^{-2}\to \infty,\,\,\,\, \frac {x} {t}\to -\infty,
\end{align*}
and since, for $|x/t|$ large enough,
\begin{align*}
\frac {x} {2t}=\left| -\frac {8} {(s_*+2)^3}+\mathcal O\left(s_*+2 \right)^{-2}\right|\le C\left|\frac {8} {(s_*+2)^3} \right|
\end{align*}
it follows that $s_*+2\to 0$ as $x/t\to -\infty$ and,
\begin{align*}
s_*+2 = 
\mathcal O\left( \frac {t} {x}\right)^{1/3},\,\,\,\frac {x} {t}\to -\infty.
\end{align*}
Then,
\begin{align*}
&\frac {1} {-\frac {8} {(s_*+2)^3}+\mathcal O\left(s_*+2 \right)^{-2}}=-\frac {2t} {x}
\end{align*}
meaning that 
\begin{align*}
&(s_*+2)^3 =\frac {16t} {x}+\mathcal O(s_*+2)^4
=\frac {16 t} {x}+\mathcal O\left( \frac {t} {x}\right)^{4/3}
\end{align*}
and so 
\begin{align*}
& s_*=-2+2\left(\frac {2 t} {x}\right)^{1/3}+\mathcal O\left( \frac {t} {x}\right)^{4/9}
\end{align*}
and the point $s _*$ is then in the domain $\Re e(s)\in (-2, 5)$ for $x/t\to -\infty$. The integration contour  $\Re e(s)=\sigma $ must now be deformed  to the  contour $\mathscr C=\left\{s= s_*+iv,\,v\in \RR\right\}$, along which the following hold,

\begin{itemize}
\item[(i)]$\psi (s; t, x)=\psi (s_*; t, x)+\frac {1} {2}(s-s_*)^2 tW''(s_*)+\mathcal O(s-s_*)^3,\,\,\frac {x} {t}\to -\infty $ 
\item[(ii)]$ \forall \rho >0,\,\exists \mu >0;\,\,\Re e(W(s_*+i v))\le \Re e(W (s_*))-\mu ,\,\,\forall |v|\ge \rho.$
\end{itemize}
\begin{figure}[]
\centering
\includegraphics[width=0.7\textwidth,height=0.3\textheight]{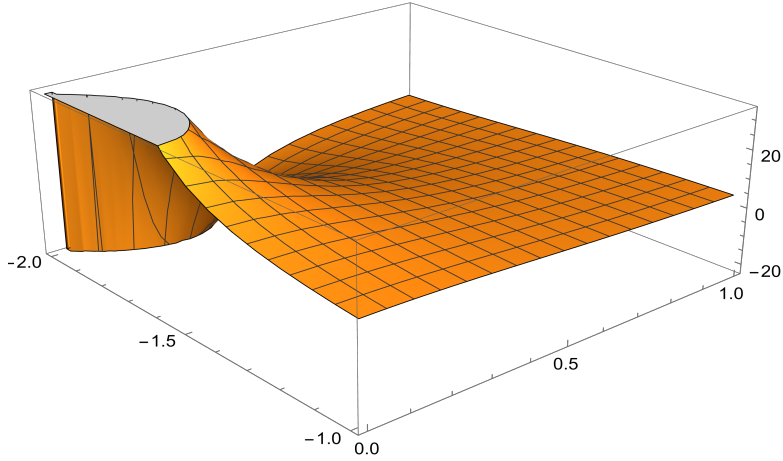}
\caption{The function $Re[W\left(u+iv\right)]$ for $u\in (-2, -1), v\in (0, 1)$.}
\end{figure}
The curve $\mathscr C$ may then be written as $\mathscr C=\mathscr C_1\cup \mathscr C_2$,\\
\begin{align*}
\mathscr C_1=\left\{s= s_*+iv,\,|v|>\rho \right\}\\
\mathscr C_2=\left\{s= s_*+iv,\,|v|<\rho \right\}
\end{align*}
and
\begin{align*}
\int  _{\mathscr C} \Phi(s) e^{\psi (s; t, x)}\dd s=
\int  _{\mathscr C_1} \Phi(s) e^{\psi (s; t, x)}\dd s+
\int  _{\mathscr C_2} \Phi(s) e^{\psi (s; t, x)}\dd s=: I_1+I_2.
\end{align*}
In the term $I_1$, point (ii) gives $\Re e(W(s_*+i v))\le \Re e(W (s_*))-\mu$, and then,
\begin{align*}
\left| \Phi (s) e^{\psi (s; t, x)}\right|=\left| \Phi (s)\right| e^{\Re e(\psi (s; t, x))}\le  \left| \Phi (s)\right| e^{-\frac {xs_*} {2}+t(\Re e(W (s_*))-\mu)}
\end{align*}
so that 
\begin{align*}
\left|\int_{\mathscr C_1} \Phi(s) e^{\psi (s; t, x)}\dd s\right| 
&\le 
C\int  _{ |v|>\rho  } \left| \Phi (s_*+iv)\right| e^{-\frac {xs_*} {2}+t(\Re e(W (s_*))-\mu)}\\
&\le Ce^{-\frac {xs_*} {2}+t(\Re e(W (s_*))-\mu)}\int_{ |v|>\rho  }\left| \Phi (s_*+iv)\right|\dd v.
\end{align*}
By hypothesis,
\begin{align*}
\Phi (s_*+iv)&=\int _0^\infty p^{s_*+iv}A(p)\frac {\dd p} {p}=\int  _{ \RR }e^{\frac {x(s_*+i v))} {2}}f_0(x)\dd x\\
&=\int_{ \RR }e^{\frac {xi v} {2}}e^{\frac {x s_*} {2}}f_0(x)\dd x=\sqrt {2\pi }\, \widehat g_0(v/2)\in L^1(\RR).
\end{align*}
where $g_0(x)=f_0(x)e^{\frac {x s_*} {2}}$.
On the other hand, if  $s=s_*+iv \in \mathscr C$ then $s-s_*=iv$ for some $v\in (-\rho , \rho )$ and then
\begin{align*}
I_2&=e^{\psi (s_*; t, x)}\int  _{|v|<\rho } \Phi(s_*+iv) e^{-\frac {1} {2}v ^2 tW''(s_*)+\mathcal O(v)^3}\dd v\\
&=e^{\psi (s_*; t, x)}\int  _{|v|<\rho } \Phi(s_*+iv) e^{-\frac {1} {2}v ^2 tW''(s_*)}\left(1+\left(e^{\mathcal O(v)^3}-1\right) \right)\dd v\\
&=e^{\psi (s_*; t, x)}\int  _{|v|<\rho } \Phi(s_*+iv) e^{-\frac {1} {2}v ^2 tW''(s_*)}\left(1+\mathcal O(\rho )^3  \right)\dd v
\end{align*}
where $W''(s_*)>0$.  
Under the change of variables $z=\sqrt {tW''(s_*)}\, v$, 
\begin{align*}
I_2&=\frac {e^{\psi (s_*; t, x)}} {\sqrt {tW''(s_*)}}
 \int\limits_{|z|<\rho  \sqrt {tW''(s_*)}}  \Phi\left(s_*+\mathcal O(\rho )\right)e^{-\frac {1} {2} z^2}
\left( 1+\mathcal O(\rho )\right)\dd z\\
&=\frac {e^{\psi (s_*; t, x)}} {\sqrt {tW''(s_*)}}\Phi\left(s_*+\mathcal O(\rho )\right)\left( 1+\mathcal O(\rho )\right) \int\limits_{|z|<\rho  \sqrt {tW''(s_*)}}  e^{-\frac {1} {2} z^2} \dd z\\
&=\sqrt{\frac {\pi } {2}}\frac {e^{\psi (s_*; t, x)}} {\sqrt {tW''(s_*)}}\Phi\left(s_*+\mathcal O(\rho )\right)\left( 1+\mathcal O(\rho )\right)
\left(1+\mathcal O\left( e^{-t\rho ^2W''(s_*)}\right) \right).
\end{align*} 
Since 
\begin{align*}
&s_*=-2+2\left(\frac {2  t} {x}\right)^{1/3}+\mathcal O\left( \frac {t} {x}\right)^{4/9}
\end{align*} 
we compute
\begin{align*}
\psi (s_*; t, x)&=-\frac {x s_*} {2}+tW(s_*)= 
x \left(1- \left(\frac {2\,  t} {x}\right)^{1/3}+\mathcal O\left( \frac {t} {x}\right)^{4/9} \right)\\
&\hspace{7cm} +\frac {4 t} {(s_*+2)^2}+t\mathcal O\left(s_* +2\right)^{-1}\\
&= x\left(1-\left(\frac {2t} {x}\right)^{1/3}+\mathcal O\left( \frac {t} {x}\right)^{4/9}\right)+4t\left(\frac {x}{16\,  t}\right)^{2/3}\left(1+\mathcal O\left(\frac {t} {x} \right)^{1/9} \right)+t\,\mathcal O\left(\frac {x} {t} \right)^{1/3}\\
&=x-(2t)^{1/3}x^{\frac {2} {3}}+t^{\frac {1} {3}}\left(\frac {x} {2}\right)^{\frac {2} {3}}+x\mathcal O\left(\frac {t} {x} \right)^{4/9}+t \mathcal O\left(\frac {x} {t} \right)^{\frac {5} {9}}+t\,\mathcal O\left(\frac {x} {t} \right)^{1/3}\\
&=x-\left( 2^{1/3}-2^{-2/3}\right)t^{\frac {1} {3}}x^{2/3}+t \mathcal O\left(\frac {x} {t} \right)^{\frac {5} {9}}.
\end{align*}
Also, 
\begin{align*}
W''(s_*)=\frac {24 } {(s_*+2)^4}+\mathcal O\left(s_*+2 \right)^{-3}
\end{align*}
which gives 
\begin{align*}
\frac {1} {\sqrt{tW''(s_*)}}&=\frac {(s_*+2)^2} {2\sqrt {6 t} }\left(1+\mathcal O\left(s_*+2 \right) \right)\\
&=\frac {1} {2\sqrt{6t}}\left(\frac {16 t} {x} \right)^{2/3}\left(1+\mathcal O\left(\frac {t} {x} \right) ^{1/9}\right)+\frac {1} {\sqrt t}\mathcal O\left( \frac {t} {x}\right)^{1/3}\\
&=\frac {2\times  2^{1/6}} {\sqrt 3} t^{1/6} x ^{-2/3}+\frac {1} {\sqrt t}\mathcal O\left( \frac {t} {x}\right)^{1/3}.
\end{align*}
Inserting these into $I_2$, gives the stated behaviour of $f$ as $x\to -\infty$. Analogously we conclude the behaviour at $x\to \infty$, as $s_*\to 5$. 
\end{proof}
\begin{rem}
If the initial data $f_0$ satisfies $\widehat g_0 \in L^1(\RR)$ for  $g_0(x)=f_0(x)e^{\frac {x s_*} {2}}$, then $g$ defined as $g(t, x)=
f(t, x)e^{\frac {x s_*} {2}}$ for $t>0, x\in \RR$ is such that $\widehat g(t) \in L^1(\RR)$ for all $t>0$ as it immediately follows from the following observation:
\begin{align*}
\widehat g(t, \xi )&=\widehat f\left(t, \xi +\frac {s_*} {2}\right)=\widehat G\left(t, \xi +\frac {s_*} {2}\right)\widehat f_0\left(\xi +\frac {s_*} {2}\right)\\
&=\widehat G\left(t, \xi +\frac {s_*} {2}\right)\widehat g_0\left(\xi\right)\in L^1(\RR),\,\,\text{since}\,\,\widehat G(t)\in L^\infty(\RR),\,\forall t>0.
\end{align*}
\end{rem}

\subsection{Back to the variables $A$ and $p>0$.}

In this subsection, we translate our obtained results in Fourier variables into the original variables $ A $ and $ p $. 
They can be summarized as follows.

\begin{itemize}
    \item[1.] Well-posedness:
    Equation \eqref{E1} is well-posed in the spaces:
    \begin{align}
    &(i)\quad L^q\left((0, \infty); \frac{\dd p}{p}\right), \quad \forall q \ge 1, \\
    &(ii)\quad L^q\left((0, \infty); p^{2\theta-1} \dd p \right), \quad q \ge 1, \quad \theta \in (-1, 5/2), \\
    &(iii)\quad \mathscr{Y}_{A, B} = \{g \in L^\infty_{\text{loc}}(0, \infty); \, ||g||_{\mathscr{Y}} < \infty\}, \\
    &\qquad ||g||_{\mathscr{Y}} = \sup_{0 < p < 1} (p^{2A} |g(p)|) + \sup_{p > 1} (p^{2B} |g(p)|), \nonumber
    \end{align}
    where $A, B$ satisfy \eqref{EAB}.

    \item[2.] Non-negative solutions:
    \begin{equation}
    A_0 \in L^q\left((0, \infty); \frac{\dd p}{p}\right) \ \& \ A_0 \ge 0 \Longrightarrow A(t) \ge 0, \quad \forall t > 0.
    \end{equation}

    \item[3.] Conservation of the wave action:
    \begin{align}
    A_0 \in L^1(0, \infty) \Longrightarrow \int_0^\infty A(t, p) \dd p = \int_0^\infty A_0(p) \dd p,
    \end{align}
    and growth of the energy:
    \begin{align}
    A_0 \in L^1((0, \infty); p^2 \dd p) \Longrightarrow \int_0^\infty A(t, p)p^2 \dd p = e^{W(3)t} \int_0^\infty A_0(p)p^2 \dd p.
    \end{align}

    \item[4.] Long time behaviour: If $A_0 \in \mathscr{Y}_{A, B} $ and $ A, B $ satisfy \eqref{EAB}, then
    \begin{align}
    \int_0^R A(t, p) \dd p = \int_0^\infty A_0(p) \dd p + C e^{-\nu t} \int_0^\infty |A_0(p)| p^{-1/4} \dd p.
    \end{align}
\end{itemize}

\begin{rem}
If $A_0\in \mathscr Y _{ A, B }$ and $A, B$ satisfy \eqref{EAB} it follows from properties 3) and 4),
\begin{align*}
\lim _{ t\to \infty }\int _R^\infty A(t, p) \dd p=0.
\end{align*}
If in addition $A_0\ge 0$, then for all $\varphi \in C([0, \infty))\cap L^\infty((0, \infty))$,
\begin{align*}
&\lim _{ t\to \infty }\int _0^\infty A(t, p)\varphi (p) \dd p= \varphi (0) m(A), \quad m(A)=\int _0^\infty A_0(p) \dd p.
\end{align*}
Indeed first, $A(t)\ge 0$ for all $t>0$.  By the continuity of $\varphi $, for all $\varepsilon >0$ and  $\rho$ small enough
\begin{align*}
&\int _0^\infty A(t, p)\varphi (p) \dd p-\int _0^\infty A_0(p) \dd p\, \varphi (0)=\\
&=\int _0^\rho A(t, p)\varphi (p) \dd p+\int _\rho ^\infty A(t, p)\varphi (p) \dd p-\int _0^\infty A(t, p)\varphi (0) \dd p\\
&=\int _0^\rho A(t, p)(\varphi (p)-\varphi (0)) \dd p+\int _\rho ^\infty A(t, p)\varphi (p) \dd p-\int _\rho ^\infty A(t, p)\varphi (0) \dd p.
\end{align*}
We have 
\begin{align*}
&\left|\int _0^\rho A(t, p)(\varphi (p)-\varphi (0)) \dd p\right|\le \varepsilon \|A_0\|_1,\\
&\left|\int _\rho ^\infty A(t, p)\varphi (p) \dd p \right|\le
 \| \varphi \| _{ \infty }\int _\rho ^\infty A(t, p)\dd p\to 0,\,\,t\to \infty,\\
&\left|\int _\rho ^\infty A(t, p)\varphi (0) \dd p\right|\le |\varphi (0)|\int _\rho ^\infty A(t, p) \dd p\to 0,\,\,t\to \infty.
\end{align*}
\end{rem}
\begin{rem}
By  the previous Remark and  (ii) of Proposition (\ref{S3Pconsl}), for all $R>0$,
\begin{align*}
&\lim _{ t\to \infty }\int _0^RA(t, p)p^2dp=0
\end{align*}
and then, for $\varepsilon >0$ as small as desired there exists  $T$ large enough, depending on $R$ and $\varepsilon $, such that
\begin{align*}
\int _R^\infty A(t, p)p^2\dd p\ge \frac {1} {1+\varepsilon }\int _0^\infty A(t, p)p^2 \dd p,\,\,\forall t\ge T.
\end{align*}
\end{rem}
\section{Appendix}
\label{Sapp}
\subsection{Results of \cite{MM}}
We record here some facts on the function $\mathcal{V}$ and its Mellin transform, that have been proved in \cite{MM}.

\begin{align}
&\mathcal V(p)=R(p)+S(p)\\
&R(p)=\left(\frac {2\log p} {1+p^2} -\frac {2} {p\sqrt{1+p^2}}\text{arctanh} \left(\frac {p} {\sqrt{1+p^2}} \right)\right)\Theta(1-p)-\nonumber\\
&-\left(\frac {2\log p} {p(1+p^2)} +\frac {2} {p\sqrt{1+p^2}}\text{arctanh} \left(\frac {1} {\sqrt{1+p^2}} \right)\right)\Theta(p-1)\label{ER}
\end{align}
and
\begin{align}
S(p)=\left(-\frac {2\log p} {1-p^2} +\frac {2} {p\sqrt{1-p^2}}\text{arctanh} \left(\frac {p} {\sqrt{1-p^2}} \right)\right)\Theta(1-p)+\nonumber\\
+\left(\frac {2\log p} {p(p^2-1)} +\frac {2} {p\sqrt{p^2-1}}\text{arctanh} \left(\frac {1} {\sqrt{p^2-1}} \right)\right)\Theta(p-1)\label{ES}
\end{align}
where $\Theta$ is the Heaviside's function. The function $\mathcal V\in C^\infty ((0, 1)\cup (1, \infty))$ is such that
\begin{align}
&\mathcal V(p)\sim -4p^2(\log p-\frac {2} {3}+\cdots),\,\,p\to 0 \label{E2}\\
&\mathcal V(p)\sim -\frac {4} {p^5}(\log p+\frac {2} {3}+\cdots),\,\,p\to \infty\label{E3}\\
&\mathcal V(p)\sim \frac {\pi } {\sqrt 2 \sqrt{|p-1|}}-1-\sqrt 2\, \text{arcoth}(\sqrt 2)+\cdots,\,p\to 1.\label{E3b}
\end{align}
The function $\mathcal V$  has a singularity at $p=1$ of order $|p-1|^{-1/2}$. Then  $\mathcal V\in L^p(0, 1)$ for $p\in [1, 2)$.
Moreover, we record the following behaviour of the first derivative
\begin{align}
&\mathcal V'(p)\sim -\frac {4} {3}(-1+6\log p )p+\mathcal O(p)^3,\,\,p\to 0 \label{EC2}\\
&\mathcal V'(p)\sim -\frac {4(7+15\log p)} {3p^6}+\mathcal O(p)^{-8},\,\,p\to \infty\label{EC3}\\
&\mathcal V'(p)\sim \frac {\pi } {2\sqrt 2\, (1-p)^{3/2}}-\frac {5 \pi } {8\sqrt 2\, \sqrt{1-p}}+\cdots ,\,p\to 1^-\label{EC3b}\\
&\mathcal V'(p)\sim -\frac {\pi } {2\sqrt 2\, (p-1)^{3/2}}-\frac {5 \pi } {8\sqrt 2\, \sqrt{p-1}}+\cdots ,\,p\to 1^+, \label{EC3b2}
\end{align}
while for the second derivative we have 
\begin{align}
&\mathcal V''(p)\sim -\frac {4} {3}(5+6\log p )+\mathcal O(p),\,\,p\to 0 \label{ED2}\\
&\mathcal V''(p)\sim -\frac {12(3+10\log p)} {p^7}+\mathcal O(p)^{-8},\,\,p\to \infty\label{ED3}\\
&\mathcal V''(p)\sim \frac {3\pi } {4\sqrt 2\, (1-p)^{5/2}}+\frac {5 \pi } {16\sqrt 2\, (1-p)^{3/2}}+\cdots ,\,p\to 1^-\label{ED3b}\\
&\mathcal V''(p)\sim \frac {3\pi } {4\sqrt 2\, (p-1)^{5/2}}+\frac {5 \pi } {16\sqrt 2\,  (p-1)^{3/2}}+\cdots ,\,p\to 1^+.\label{ED3b2}
\end{align}

The Mellin transform of $\mathcal V$ was also obtained in \cite{MM}. It is a meromorphic function $W _{ \mathcal V }$ such that $\gamma +W _{ \mathcal V }$ is analytic on the domain $D$ where $\Re e(s)\in (-2, 5)$, has two double poles at $s=-2$ and $s=5$ and  
\begin{align}
W(s)=\frac {4} {(s+2)^2}+\frac {8} {3(s+2)}+\mathcal O(1),\,s\to -2,\,s\in D \label{ED2W}\\
W(s)=\frac {4} {(s-5)^2}-\frac {8} {3(s-5)}+\mathcal O(1),\,s\to 5,\,s\in D. \label{ED3W}
\end{align}
Moreover, 
\begin{align}
W'(s)=-\frac {8} {(s+2)^3}-\frac {8} {3(s+2)^2}+\mathcal O(1),\,s\to -2,\,s\in D \label{EDP2W}\\
W'(s)=-\frac {8} {(s-5)^3}+\frac {8} {3(s-5)^2}+\mathcal O(1),\,s\to 5,\,s\in D, \label{EDP3W}
\end{align}
while
\begin{align}
W''(s)=\frac {24} {(s+2)^4}+\frac {16} {3(s+2)^3}+\mathcal O(1),\,s\to -2,\,s\in D \label{EDpp2W}\\
W''(s)=\frac {24} {(s-5)^3}-\frac {16} {3(s-5)^2}+\mathcal O(1),\,s\to 5,\,s\in D.\label{EDpp3W}
\end{align}

By definition of $K$ and  properties (\ref{E2})-(\ref{E3b}), 
\begin{align}
K(x)&=\mathcal V(p)\sim  -4p^2(\log p-\frac {2} {3}+\cdots)=-4e^{x}\left(\frac {x} {2}-\frac {2} {3}+\cdots\right),\,\,x\to -\infty \label{E13}\\
K(x)&=\mathcal V(p)\sim -\frac {4} {p^5}\left(\log p+\frac {2} {3}+\cdots \right)=4e^{-\frac {5x} {2}}\left( \frac {x} {2}+\frac {2} {3}+\cdots \right),\,\,x\to \infty,  \label{E14}\\
K(x)&=\mathcal V(p)\sim  \frac {\pi } {\sqrt 2 \sqrt{|e^{\frac {x} {2}}-1|}}-1-\sqrt 2\, \text{arcoth}(\sqrt 2)+\cdots %\nonumber\\
%&
= \frac {\pi } {\sqrt {|x|}}-\frac {\pi \sqrt{|x|}} {8}+\cdots,\,x\to 0. \label{E14b}
\end{align}
Since,
\begin{align*}
K'(x)=\mathcal V'(p)\frac {\dd p} {\dd x}=\mathcal V'(p)\frac {p} {2}
\end{align*}
it follows from (\ref{EC2})-(\ref{EC3b2}),
\begin{align}
K'(x )&\sim -\frac {4} {6}(-1+3x)e^{x} +\mathcal O(e^{\frac {x} {2}})^4,\,x\to -\infty,  \label{EC2K}\\
K'(x )&\sim -\frac {4} {6}\left(7+\frac {15} {2}x\right)e^{-\frac {5x} {2}} +\mathcal O(e^{\frac {x} {2}})^{-7},\,x\to \infty,   \label{EC3K}\\
K'(x)&\sim  \frac {\pi e^{\frac {x} {2}} } {4\sqrt 2\, (1-e^{\frac {x} {2}} )^{3/2}}-\frac {5 \pi e^{\frac {x} {2}}  } {16\sqrt 2\, \sqrt{1-e^{\frac {x} {2}} }}+\cdots ,\,x\to 0^-\nonumber\\
&\sim \frac {\pi } {2|x|^{3/2}}-\frac {3\pi } {8 |x|^{1/2}}+\mathcal O(|x|)^{1/2},\,x\to 0^- \label{EC3bK}\\
K'(x) &\sim 
-\frac {\pi e^{\frac {x} {2}} } {4\sqrt 2\, (e^{\frac {x} {2}} -1)^{3/2}}-\frac {5 \pi  e^{\frac {x} {2}} } {16\sqrt 2\, \sqrt{e^{\frac {x} {2}} -1}}+\cdots 
,\,x\to 0^+ 
\nonumber\\
&
\sim -\frac {\pi } {2|x|^{3/2}}-\frac {3\pi } {8 |x|^{1/2}}+\mathcal O(|x|)^{1/2},\,x\to 0^+. \label{EC3b2K}
\end{align}
Similarly, from \eqref{ED2} - \eqref{ED3b2}, 
%\begin{align}
%&\mathcal V''(p)\sim -\frac {4} {3}(5+6\log p )+\mathcal O(p),\,\,p\to 0 \label{ED2}\\
%&\mathcal V''(p)\sim -\frac {12(3+10\log p)} {p^7}+\mathcal O(p)^{-8},\,\,p\to \infty\label{ED3}\\
%&\mathcal V''(p)\sim \frac {3\pi } {4\sqrt 2\, (1-p)^{5/2}}+\frac {5 \pi } {16\sqrt 2\, (1-p)^{3/2}}+\cdots ,\,p\to 1^-\label{ED3b}\\
%&\mathcal V''(p)\sim \frac {3\pi } {4\sqrt 2\, (p-1)^{5/2}}+\frac {5 \pi } {16\sqrt 2\,  (p-1)^{3/2}}+\cdots ,\,p\to 1^+\label{ED3b2}
%\end{align}
and
$K''(x)=\mathcal V''(p)\left(\frac {\dd p} {dx}\right)^2+\mathcal V'(p)\frac{ \dd^2p}{\dd x^2}$, $\dd p/\dd x=e^{x/2}/2$,
\begin{align}
&|K''(x)|\le C|x|e^{x},\,\,x \to -\infty \label{ED2K}\\
&|K''(x)|\le C|x|e^{-\frac {5x} {2}},\,\,x \to \infty \label{ED3K}\\
&K''(x)\sim  \frac {3\pi } {4 |x|^{5/2} }+\mathcal O\left( |x|^{3/2}\right),\,x\to 0.
\end{align}

\begin{prop} It holds that 
\begin{equation}
\label{KastK0}
K\ast K(x)=
\begin{cases}
\frac {8x^3} {3}e^{-\frac {5 x} {2}}+\mathcal O(xe^{-\frac {5 x} {2}}),\,x\to \infty\\
-\frac {8x^3} {3}e^{ x}+\mathcal O(xe^{x}),\,x\to -\infty.
\end{cases}
\end{equation}
\end{prop}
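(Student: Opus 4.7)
The plan is to compute $K\ast K(x)=\int_{\RR}K(y)K(x-y)\dd y$ directly from the explicit pointwise asymptotics of $K$ recorded in the Appendix: the local singularity $K(x)\sim \pi/\sqrt{|x|}$ near $x=0$, and the two exponential regimes $K(x)\sim -4e^{x}(x/2-2/3)$ as $x\to-\infty$ and $K(x)\sim 4e^{-5x/2}(x/2+2/3)$ as $x\to +\infty$. The idea is to partition the real line so that on each piece each of the two factors $K(y)$, $K(x-y)$ is in a regime where an explicit expansion is available, and to isolate a single \emph{bulk} piece that carries the claimed leading cubic behaviour while the remaining pieces are absorbed in the $O(xe^{\pm\cdot})$ remainder.

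Consider first $x\to+\infty$ and split $\RR=(-\infty,0)\cup[0,x]\cup(x,+\infty)$. On $(-\infty,0)$ the factor $K(y)$ is of order $|y|e^{y}$ while $K(x-y)$, whose argument lies in $[x,\infty)$, is of order $(x-y)e^{-5(x-y)/2}$, so the integrand is bounded pointwise by a constant multiple of $|y|(x-y)\,e^{7y/2}\,e^{-5x/2}$; integrating in $y$ gives a contribution of order $xe^{-5x/2}$. By the invariance of the convolution under $y\leftrightarrow x-y$, the region $(x,+\infty)$ contributes the same order. The bulk piece $[0,x]$ contains the $|\cdot|^{-1/2}$ singularities of $K$ at both endpoints, so I would further partition it as $[0,\varepsilon]\cup[\varepsilon,x-\varepsilon]\cup[x-\varepsilon,x]$ with a fixed small $\varepsilon>0$. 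On each boundary layer the local expansion \eqref{E14b} makes $K(\cdot)$ integrable while the other factor is uniformly bounded by $Cx\,e^{-5x/2}$, so each layer contributes only $O(xe^{-5x/2})$. On the central bulk both factors are in the $z\to+\infty$ regime and the leading-order substitution gives
\begin{align*}
\int_{\varepsilon}^{x-\varepsilon}K(y)K(x-y)\dd y
= e^{-5x/2}\int_{\varepsilon}^{x-\varepsilon}\bigl(cy+O(1)\bigr)\bigl(c(x-y)+O(1)\bigr)\dd y,
\end{align*}
which after evaluation of the elementary integral $\int_0^x y(x-y)\dd y=x^{3}/6$ produces the advertised cubic-in-$x$ leading term times $e^{-5x/2}$, with all subleading pieces absorbed into the $O(xe^{-5x/2})$ remainder together with the previous tail and boundary contributions.

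The case $x\to-\infty$ is entirely parallel. One partitions $\RR=(-\infty,x)\cup[x,0]\cup(0,+\infty)$; on the outer intervals one of the two factors is exponentially decaying at a rate that dominates the other, producing a remainder of size $O(|x|e^{x})$. On the bulk $[x,0]$ both arguments are very negative, so by \eqref{E13} the product satisfies $K(y)K(x-y)\sim c^{2}y(x-y)\,e^{y}\,e^{x-y}=c^{2}y(x-y)\,e^{x}$; integrating over $[x,0]$ and using $\int_x^0 y(x-y)\dd y=-x^{3}/6$ gives the stated negative cubic leading term multiplied by $e^{x}$.

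The main point where care is required is the handling of the $|\cdot|^{-1/2}$ singularity of $K$ in the boundary-layer pieces: here the explicit expansion \eqref{E14b} combined with Fubini yields estimates that are uniform in $x$ once $\varepsilon$ is fixed, and one can then send $\varepsilon\to 0$ at the end. Beyond that, the argument reduces to the polynomial integration $\int_0^x y(x-y)\dd y$ and the bookkeeping of subleading terms coming from \eqref{E13}--\eqref{E14}, which together match the stated $\mathcal O(xe^{-5x/2})$ and $\mathcal O(xe^{x})$ remainders.
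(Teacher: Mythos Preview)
Your decomposition and overall strategy coincide with the paper's: the same three-piece split $(-\infty,0)\cup[0,x]\cup(x,\infty)$ for $x\to+\infty$, the same tail estimates, and the same extraction of the cubic leading term from the bulk via the large-argument asymptotics of $K$. There is one point that needs tightening. On your central piece $[\varepsilon,x-\varepsilon]$ with $\varepsilon>0$ \emph{fixed and small}, the assertion that ``both factors are in the $z\to+\infty$ regime'' is not literally true: for $y$ near $\varepsilon$ the expansion \eqref{E14} does not apply to $K(y)$. Your substitution $K(y)=e^{-5y/2}(cy+O(1))$ is still valid uniformly on $[\varepsilon,\infty)$, but only because $K$ is continuous on $(0,\infty)$ (hence $e^{5y/2}K(y)-cy$ is bounded on every compact of $(0,\infty)$ and bounded at infinity by \eqref{E14}); you should state this. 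The paper avoids the issue differently: it first uses the symmetry $\int_0^x=2\int_0^{x/2}$ so that $x-y\ge x/2\to\infty$ and only $K(x-y)$ needs the asymptotic, and then splits the resulting integral $\int_0^{x/2}e^{5y/2}K(y)\dd y$ at a further \emph{large} fixed $R$, invoking \eqref{E14} for $K(y)$ only on $[R,x/2]$. Either route works; yours is slightly more symmetric, the paper's slightly more explicit about where each asymptotic is used. Note also that the cross terms $cy\cdot O(1)$ in your bulk computation integrate to $O(x^{2})$, so the remainder you actually produce is $O(x^{2}e^{-5x/2})$, which is exactly what the paper's proof obtains as well.
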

\begin{proof}
Suppose $x\to \infty$ and write,
\begin{align}
\label{KastK1}
K\ast K(x)=\int  _{ -\infty }^0K(x-y)K(y)\dd y+\int_0^xK(x-y)K(y)\dd y+\nonumber\\+\int  _x^\infty K(x-y)K(y)\dd y.
\end{align}
By (\ref{E14}),
\begin{align*}
&\int_x^\infty K(x-y)K(y)\dd y\sim 
4 \int _x^\infty ye^{-\frac {5y} {2}}K(x-y) \dd y\\
&=4 \int _{ -\infty }^0 (x-z)e^{-\frac {5(x-z)} {2}}K(z)\dd z \\ & 
=
4 xe^{-\frac {5x} {2}}\int _{ -\infty }^0  e^{\frac {5z} {2}}K(z) \dd z
 -4 e^{-\frac {5x} {2}}\int _{ -\infty }^0  ze^{\frac {5z} {2}}K(z) \dd z.
\end{align*}
where, by (\ref{E13}),
\begin{align*}
\int _{ -\infty }^0  e^{\frac {5z} {2}}K(z)\dd z<\infty \,\,\text{ and}\,\,\,\int _{ -\infty }^0  ze^{\frac {5z} {2}}K(z) \dd z<\infty.
\end{align*}
This implies
\begin{align}
\label{KastK2}
\int_x^\infty K(x-y)K(y)\dd y=
4xe^{-\frac {5x} {2}}\int _{ -\infty }^0  e^{\frac {5z} {2}}K(z)\dd z+
\mathcal O\left(e^{-\frac {5x} {2}} \right),\,x\to \infty.
\end{align}
On the other hand, from \eqref{E13}, \eqref{E14}
\begin{align}
\label{KastK6}
&\int_{ -\infty }^0 K(x-y)K(y)\dd y
\le C\int_{ -\infty}^0 (x-y)e^{-\frac {5(x-y)} {2}}ye^{y}\dd y\nonumber\\
&\le C xe^{-\frac {5x} {2}}\int_{ -\infty }^0 e^{\frac {7y} {2}}y \dd y+
C e^{-\frac {5x} {2}}\int_{ -\infty }^0 e^{\frac {7y} {2}}y^2 \dd y=
\mathcal O\left(xe^{-\frac {5x} {2}} \right).
\end{align}
The second integral in the right hand side of (\ref{KastK1}) gives first,
\begin{align*}
\int _0^xK(x-y)K(y)\dd y=2\int _0^{x/2}K(x-y)K(y)\dd y.
\end{align*}
As $x\to \infty$, $x-y>x/2\to \infty$ and so by \eqref{E14},
\begin{align*}
\int _0^{x/2}K(x-y)K(y)\dd y\sim 4\int _0^{x/2}(x-y)e^{-\frac {5(x-y)} {2}}K(y)\dd y\\
=4xe^{-\frac {5x} {2}}\int _0^{x/2} e^{\frac {5y} {2}}K(y)\dd y-4e^{-\frac {5x} {2}}\int _0^{x/2}ye^{\frac {5y} {2}}K(y)\dd y.
\end{align*}
For $R\in (0, x/2)$ large ,
\begin{align*}
\int _0^{x/2} e^{\frac {5y} {2}}K(y)\dd y=\int _0^{R} e^{\frac {5y} {2}}K(y)\dd y+\int _R^{x/2} e^{\frac {5y} {2}}K(y)\dd y
\end{align*}
For $y>R$, $K(y)=ye^{-\frac {5y} {2}}+\mathcal O\left(e^{-\frac {5y} {2}}\right)$, then
\begin{align*}
&\int_R^{x/2} e^{\frac {5y} {2}}K(y)\dd y=\int _R^{x/2} e^{\frac {5y} {2}} \left(4ye^{-\frac {5y} {2}}+\mathcal O\left(e^{-\frac {5y} {2}}\right) \right)\dd y\\
&=4\int _R^{x/2} y \dd y+\int _R^{x/2} e^{\frac {5y} {2}}\mathcal O\left(e^{-\frac {5y} {2}}\right) \dd y
=2\left(\frac {x^2} {4}-R^2 \right)+\mathcal O\left(x \right)\\
&= \frac {x^2} {2}+\mathcal O(R^2)+\mathcal O\left(x \right),\,x\to \infty,
\end{align*}
and then, 
\begin{align*}
\int _0^{x/2} e^{\frac {5y} {2}}K(y)\dd y= \frac {x^2} {2}+\mathcal O(R^2)+\mathcal O\left(x \right),\,x\to \infty.
\end{align*}
A similar argument in the integral
\begin{align*}
\int _0^{x/2} e^{\frac {5y} {2}}yK(y)\dd y=\int _0^{R} ye^{\frac {5y} {2}}K(y)\dd y+\int _R^{x/2}y e^{\frac {5y} {2}}K(y)\dd y
\end{align*}
yields,
\begin{align*}
&\int _R^{x/2} ye^{\frac {5y} {2}}K(y)\dd y=\int _R^{x/2} e^{\frac {5y} {2}} y\left(4ye^{-\frac {5y} {2}}+\mathcal O\left(e^{-\frac {5y} {2}}\right) \right)dy\\
&=4\int _R^{x/2} y^2 \dd y+\int _R^{x/2} e^{\frac {5y} {2}}y\mathcal O\left(e^{-\frac {5y} {2}}\right) dy
=\frac {4} {3}\left(\frac {x^3} {8}-R^3 \right)+\mathcal O\left(x^2 \right)\\
&= \frac {x^3} {6}+\mathcal O(R^3)+\mathcal O\left(x ^2\right),\,x\to \infty.
\end{align*}
Then, 
\begin{align*}
\int _0^{x/2} e^{\frac {5y} {2}}yK(y)dy= \frac {x^3} {6}+\mathcal O(R^3)+\mathcal O\left(x ^2\right),\,x\to \infty.
\end{align*}
It follows,
\begin{align*}
&\int _0^{x/2}K(x-y)K(y)dy=4xe^{-\frac {5x} {2}}\Bigg(  \frac {x^2} {2}+\mathcal O(R^2)+\mathcal O\left(x \right)\Bigg)-\\
&-4e^{-\frac {5x} {2}}\Bigg(\frac {x^3} {6}+\mathcal O(R^3)+\mathcal O\left(x ^2\right) \Bigg)
=\frac {4x^3} {3}e^{-\frac {5x} {2}}+\mathcal O\left(x^2 e^{-\frac {5x} {2}} \right),\,\,x\to \infty
\end{align*}
and
\begin{align}
\label{KastK3}
\int _0^xK(x-y)K(y)dy=\frac {8x^3} {3}e^{-\frac {5x} {2}}+\mathcal O\left(x^2 e^{-\frac {5x} {2}} \right),\,\,x\to \infty.
\end{align}
Property (\ref{KastK0}) for $x\to \infty$ follows from  (\ref{KastK2}), (\ref{KastK6}) and (\ref{KastK3}). A similar argument shows (\ref{KastK0}) for $x\to -\infty$.
\end{proof}

\textbf{Acknowledgments.}
The research of the first author is supported by grant PID2020-112617GB-C21 of MCIN, grant  IT1247-19 of the Basque Government and grant RED2022-134784-T funded by MCIN/AEI/10.13039/501100011033. The second author acknowledges support from the Chapman Fellowship at Imperial College London.

\end{document}